\documentclass[11pt,reqno]{amsart}

\usepackage{amsmath, amsthm, amssymb}
\usepackage{amsfonts}
\usepackage{mathrsfs}

\usepackage{hyperref}

\usepackage[ansinew]{inputenc}
\usepackage[dvips]{epsfig}
\usepackage{graphicx}
\usepackage[english]{babel}
\usepackage[thinc]{esdiff}

\usepackage{csquotes} 

\usepackage{enumerate}
\usepackage{enumitem}
\usepackage{multirow}

\usepackage{thmtools}
\theoremstyle{plain}
\declaretheorem[title=Theorem, parent=section]{theorem}
\declaretheorem[title=Lemma,sibling=theorem]{lemma}
\declaretheorem[title=Proposition,sibling=theorem]{proposition}
\declaretheorem[title=Corollary,sibling=theorem]{corollary}

\theoremstyle{definition}
\declaretheorem[title=Definition,sibling=theorem]{definition}
\declaretheorem[title=Remark,sibling=theorem]{remark}
\declaretheorem[title=Remark, numbered=no]{remark*}

\declaretheorem[title=Assumption, numbered=no]{assumption*}

\numberwithin{equation}{section}

\usepackage[backgroundcolor=white, bordercolor=blue,
linecolor=blue]{todonotes}

\usepackage[
backend=biber,
style=alphabetic,
sorting=nyt,
giveninits=true,
doi=false,
isbn=false,
url=false
]{biblatex}

\DeclareFieldFormat[article]{title}{\mkbibemph{#1}}

\DeclareFieldFormat[article]{journaltitle}{#1}

\DeclareFieldFormat[book]{pages}{#1}
\DeclareFieldFormat[inbook]{pages}{#1}

\renewbibmacro{in:}{
  \ifboolexpr{
     test {\ifentrytype{article}}%
  }{}{\printtext{\bibstring{in}\intitlepunct}}%
}

\renewbibmacro*{note+pages}{%
  \iffieldundef{note}
    {}
    {\setunit{\addcomma\space}}%
  \printfield{note}%
  \setunit{\bibpagespunct}%
  \printfield{pages}%
  \newunit}

\usepackage{dsfont}
\usepackage{bbm}

\newcommand{\N}{\mathbb{N}}
\newcommand{\R}{\mathbb{R}}

\newcommand{\cC}{\mathcal{C}}
\newcommand{\cP}{\mathcal{P}}

\newcommand{\gC}{\mathscr{C}}

\newcommand{\eps}{\varepsilon}

\newcommand{\bH}{\textbf{H}}

\newcommand{\average}{{\mathchoice {\kern1ex\vcenter{\hrule height.4pt
width 6pt depth0pt} \kern-9.7pt} {\kern1ex\vcenter{\hrule
height.4pt width 4.3pt depth0pt} \kern-7pt} {} {} }}

\usepackage{scalerel}
\usepackage{stackengine}
\newcommand{\fint}{\ThisStyle{\ensurestackMath{%
      \stackinset{c}{0\LMpt}{c}{0\LMpt}{\SavedStyle-}{\SavedStyle\phantom{\int}}}%
    \setbox0=\hbox{$\SavedStyle\int\,$}\kern-\wd0}\int}

\begin{document}

\allowdisplaybreaks

 \title[Dimension of the singular set 
 in the parabolic obstacle problem]{Dimension of the singular set \\ in the parabolic obstacle problem}

\author{Alejandro Mart\'inez}

\author{Xavier Ros-Oton}

\address{Departament de Matem\`atiques i Inform\`atica, Universitat de Barcelona, Gran Via de les Corts Catalanes 585, 08007 Barcelona, Spain}
\email{amartinezsanchez@ub.edu}

\address{ICREA, Pg. Llu\'is Companys 23, 08010 Barcelona, Spain \& Universitat de Barcelona, Departament de Matem\`atiques i Inform\`atica, Gran Via de les Corts Catalanes 585, 08007 Barcelona, Spain \& Centre de Recerca Matem\`atica, Barcelona, Spain}
\email{xros@icrea.cat}

\keywords{Parabolic obstacle problem, free boundary}

\subjclass[2020]{35R35, 35B65, 35K10}

\begin{abstract}
In this paper we study the singular set in the parabolic obstacle problem for general obstacles $\varphi \in C^{2,1}$. We prove that the singular set has parabolic Hausdorff dimension at most $n-1$. Prior to our result, this was only known when $\Delta \varphi \equiv -1$. Our approach combines a truncated parabolic frequency formula and monotonicity estimates with an iterative argument showing that the frequency is saturated for all values of the truncation parameter between $2$ and $3$.
\end{abstract}

\allowdisplaybreaks

\maketitle



    

%



\section{Introduction}  

We consider the parabolic obstacle problem
\begin{align}
\label{eq:raw_parabolic_obstacle_problem}
\begin{cases}
\begin{alignedat}{2}
\partial_t v - \Delta v \;&=\; 0 
&\quad& \text{in } \{ v > \varphi \} \cap \Omega \times (0,T) \\[0.3em]
v \;&\geq\; \varphi 
&& \text{in } \Omega \times (0,T) \\[0.3em]
\partial_t v - \Delta v \;&\ge\; 0 
&& \text{in } \Omega \times (0,T), 
\end{alignedat}
\end{cases}
\end{align}
with boundary condition $v = g$ and initial condition $v(x,0)=\varphi(x)$, where $\varphi \in C^{2,1}(\overline{\Omega})$ is the obstacle.

This problem can be formulated as a variational inequality and it arises (among other contexts) in the optimal stopping problem for the Brownian motion; see \cite{Eva13}. In particular, it arises also in Mathematical Finance, in the Black-Scholes model for pricing of American options \cite{LS09}. We refer to the books \cite{DL76, KS80, Rod87, Fri88, PSU12} for further applications and motivations of (parabolic) obstacle problems.

Considering $u := v - \varphi$ and localizing to an interior cylinder, the parabolic obstacle problem becomes
\begin{equation}
\label{eq:parabolic_obstacle_problem}
\left\{
\begin{alignedat}{2}
\partial_t u - \Delta u \;&=\; -f(x)\,\chi_{\{u>0\}}
&\quad& \text{in } B_1\times(-1,1) \\[0.3em]
u \;&\ge\; 0
&& \text{in } B_1\times(-1,1) \\[0.3em]
\partial_t u \;&\ge\; 0
&& \text{in } B_1\times(-1,1) \\[0.3em]
\partial_t u \;&>\; 0
&& \text{in } \{u>0\},
\end{alignedat}
\right.
\end{equation}

where $f := -\Delta \varphi$ is a smooth nonnegative function. When $f\equiv 1$, this is also known as the (one-phase) Stefan problem.

One of the central challenges in obstacle-type problems is to understand the regularity of the free boundary. For \eqref{eq:parabolic_obstacle_problem}, little was known until the groundbreaking paper \cite{Caf77}. In this work, Caffarelli studied the case $f\equiv 1$ and proved that the free boundary is locally $C^\infty$ outside a closed set of singular points $\Sigma$, at which the contact set $ \{ u = 0\}$ has zero density.

More results on the singular set have been obtained in subsequent works. Using techniques adapted from the elliptic obstacle-type problem, it was shown in \cite{Bla06, LM15} that, at each fixed time $t$, the set of singular points $\Sigma_t$ is locally contained in an $(n-1)$-dimensional $C^1$ manifold. Other related works have explored structural and regularity aspects of parabolic obstacle problems: Hausdorff dimension of the free boundaries in related problems \cite{Wei99}; regularity for changing-sign solutions \cite{CPS04}; asymptotic behavior of solutions \cite{CSV20} and global regularity of the free boundary \cite{FRT26} among others. 

Despite these results, one question remained unanswered: can one prove that the singular set is $(n-1)$-dimensional in $(x,t)\in\R^n\times\R$? This was finally established in the recent paper \cite{FRS24} in case $f\equiv 1$. Moreover, they proved also that the singular set can be covered by $C^\infty$ $(n-1)$-dimensional manifolds, up to a smaller set $\Gamma$ of dimension $n-2$. Building on the work \cite{FRS24}, the recent paper \cite{Col25} proved that such set $\Gamma$ is actually parabolically countably $(n-2)$-rectifiable.

Our main goal in this paper is to extend the dimension bound obtained in \cite{FRS24}, previously known only for the constant case $f \equiv 1$, to arbitrary positive Lipschitz functions $f$. As explained below, this generalization is not trivial and requires not only controlling the errors in several monotonicity formulas, but also an iterative argument that is different from \cite{FRS24}.

\begin{theorem}
    \label{thm:main_theorem}
    Let $\Omega \subset \R^n$ be any open set, let $u \in L^\infty(\Omega \times (-T, T))$ solve the parabolic obstacle problem \eqref{eq:parabolic_obstacle_problem} with $f\in C^{0, 1}(\bar{\Omega} \times [-T, T])$ and $f > 0$. Let $\Sigma \subset \Omega \times (-T, T)$ be the set of singular points. Then,
    $$
    \dim_\text{par}(\Sigma) \leq n-1,
    $$
    where $\dim_\text{par}$ denotes the parabolic Hausdorff dimension.
\end{theorem}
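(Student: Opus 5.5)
The plan follows the strategy of \cite{FRS24}, adapted to variable $f$. We stratify the singular set according to the dimension of the space of blow-ups and, on each stratum, use a (truncated) parabolic frequency to force the time-slices of the singular set to be ``sparse'' in time. Concretely, at a singular point $(x_0,t_0)$ the parabolic blow-up of $u$ is a caloric polynomial $p_2$ of degree $2$ with $\partial_t p_2 \ge 0$ and $\partial_t p_2-\Delta p_2 = -f(x_0,t_0)$. Let $m=\dim\{p_2=0\}\cap\{t=0\}$-type (the dimension of the spatial kernel of $D^2p_2$). The points with $m\le n-2$ already form a set of parabolic dimension $\le n-1$. This follows from the usual Whitney/Federer-type covering argument, using uniqueness and continuous dependence of blow-ups, which is available by the monotonicity formulas proved earlier (the Weiss/Monneau-type estimates with Lipschitz error terms coming from $|f(x,t)-f(x_0,t_0)|\le C(|x-x_0|+|t-t_0|^{1/2})$). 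So the whole difficulty is the top stratum $m=n-1$, where the spatial slice $\Sigma_t$ is $(n-1)$-dimensional. There one must show that $\Sigma$ essentially meets each such point in a time set of dimension $0$. Equivalently, one must show that the parabolic dimension does not gain the extra $2$ coming from the time direction.

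For the top stratum we consider $w=u-p_2$ (after translation and rescaling) and a truncated Almgren–Poon frequency
\[
\phi^{\gamma}(r,w)\,,
\]
which is a Gaussian-weighted frequency with the error terms absorbed by multiplying by $e^{Cr^{\alpha}}$ and truncating at level $\gamma$. Using the frequency monotonicity stated earlier, the limit $\lambda=\phi^\gamma(0^+)$ exists. Blow-ups of $w$ are then $\lambda$-homogeneous solutions of a thin/parabolic Signorini-type problem, and the admissible homogeneities are discrete. The key dichotomy is this: if $\lambda\ge 2+\delta$ at a point, then a cleaning-type lemma shows that nearby singular points at times $t$ with $|t-t_0|\gg |x-x_0|^{2}$ cannot exist. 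Precisely, the positivity $\partial_t u>0$ in $\{u>0\}$ combined with a barrier gives that $u$ becomes strictly positive in a full space neighbourhood after time $t_0+C r^{\lambda}$ with $\lambda>2$. A standard covering argument in time then shows that the projection of such points on the time axis has dimension $<1$ in parabolic units. This gives $\dim_{\rm par}\le n-1$ for this part.

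The main obstacle, and the step I would spend most effort on, is showing that the frequency at top-stratum points is in fact $\ge 3$ (or at least $>2$) outside a lower-dimensional set. That is, the frequency must be ``saturated'': for every truncation level $\gamma\in(2,3)$ we should have $\phi^\gamma(0^+)=\gamma$. Because $f$ is only Lipschitz, the error terms prevent using a single truncation at level $3$ as in \cite{FRS24}. I would instead argue iteratively. First I would show $\lambda\ge 2$ and that $\lambda=2$ can only occur on a set of dimension $\le n-2$, via uniqueness of second blow-ups and a Federer reduction. Then, given that $\phi^{\gamma_k}(0^+)=\gamma_k$, I would improve to a larger $\gamma_{k+1}=\min\{\gamma_k+\epsilon_0,3\}$. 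The improvement uses the decay $\|w\|_{L^2(\text{Gaussian},r)}\lesssim r^{\gamma_k}$ to show that the error in the monotonicity of $\phi^{\gamma_{k+1}}$ is integrable, and then uses the classification of homogeneous blow-ups to exclude homogeneities in $(2,3)$ except along sets of small dimension. After finitely many steps this reaches every $\gamma<3$. Combining this with the dichotomy above and the lower strata yields $\dim_{\rm par}(\Sigma)\le n-1$.
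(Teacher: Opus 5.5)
There are genuine gaps in how you turn frequency information into a dimension bound. Your top-stratum iteration broadly matches the paper's second blow-up analysis: truncated frequency, decay at level $\gamma_k$ making the errors at a higher level integrable, and classification of homogeneous Signorini blow-ups.

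\emph{Cleaning threshold.} In the cleaning lemma (\autoref{lem:quadratic_cleaning}) the ball $B_{r/2}$ is cleared only after time $C\omega(r)r^{-\beta}+r^2$. Here $H(r,u-p_2)^{1/2}\le\omega(r)$ and $\fint_{B_r\cap\{|x_n|\ge r/10\}}\partial_t u(\cdot,-r^2)\ge cr^\beta$. On $\Sigma_{n-1}$ the rate one can guarantee is $\beta=1$, from the Hopf lemma once the contact set is known to lie in $\{|x_n|\le Cr^{2-\eps}\}$. So a decay $\omega(r)=r^{\lambda}$ cleans only after time $\sim r^{\lambda-1}$, not $r^{\lambda}$. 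With $\lambda=2+\delta$ you only get $t-t_0\lesssim|x-x_0|^{1+\delta}$, hence at best $\dim_{\mathrm{par}}\le\frac{2}{1+\delta}(n-1)$. You need $\omega(r)=r^{3-\eps}$, i.e.\ $\lambda^*=\gamma$ for every $\gamma<3$; this, not ``$\lambda>2$'', is why the iteration must reach $3$.

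\emph{Final count.} Slices $\Sigma_t$ of dimension $n-1$ plus a time projection of parabolic dimension $<1$ do not give $\dim_{\mathrm{par}}\le n-1$. For example, $\R^{n-1}\times K$, with $K\subset\R$ a Cantor set of parabolic dimension $\frac12$, has parabolic dimension $n-\frac12$. The missing ingredient is \autoref{thm:cover_of_singular_projection}: the spatial projection $\pi_x(\Sigma)$ of the \emph{whole} space-time singular set is locally covered by an $(n-1)$-dimensional $C^1$ manifold. Combined with $(2-\eps)$-cleaning at \emph{every} singular point, \autoref{lem:dimension_reduction_lemma} gives $\dim_{\mathrm{par}}(\Sigma)\le\dim_{\mathcal H}(\pi_x(\Sigma))\le n-1$.

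\emph{Lower strata.} These are not ``already'' handled. A Whitney/Federer argument gives at best $\dim_{\mathrm{par}}(\Sigma_m)\le m+2$, because $p_2$ is invariant both along its $m$-dimensional kernel and in time; for $m=n-2$ this is $n$. The paper proves $(2-\eps)$-cleaning there as well. The positive caloric $N$-homogeneous function $\Phi$ of \autoref{lem:FRS_5.8_claim}, with $N<\eps$ and vanishing on the boundary of a cone around $\{p_2=0\}$, is a barrier for $\partial_t u$. This yields $\fint_{\partial B_r}\partial_t u\ge c\,r^{\eps}$. The cleaning lemma with $\omega(r)=o(r^2)$ and $\beta=\eps$ then clears after time $r^{2-\eps}$, and the projection argument above covers all strata at once.

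\emph{Exceptional sets.} You assert without argument that $\lambda=2$ occurs only on a set of dimension $\le n-2$, and that homogeneities in $(2,3)$ are excluded ``except along sets of small dimension''. Such sets would need their own parabolic dimension bounds. The paper needs none. \cite{Col25} rules out homogeneities in $(2,3)$ outright. $\lambda^*=2$ is excluded at every point of $\Sigma_{n-1}$ by the Monneau-type monotonicity of $H(r,(u-p)\zeta)/r^4$ for all $p\in\cP$ (\autoref{lem:monotonicity_H}), which forces $\int q(p_2-p)G\ge 0$ for all $p\in\cP$.

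A minor point: the admissible step is $\gamma_{k+1}<(3+\gamma_k)/2$, not a fixed increment. This is harmless, since every $\gamma<3$ is still reached in finitely many steps.
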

For a definition of the parabolic Hausdorff dimension see Section \ref{sec:notation_and_formulas}.

\subsection{The singular set}


Under the assumption $f > 0$, points on the free boundary $\partial\{u >0\}$ can be classified into two categories: we call $(x_0, t_0)$ a regular point if and only if
$$
\lim_{r\downarrow 0} \frac{u(x_0 + rx,t_0 + r^2t)}{r^2} = \frac{f(x_0)}{2} \max \{\textbf{e} \cdot x, 0 \}^2
$$
for some $\textbf{e} \in \mathbb{S}^{n-1}$. We call $(x_0, t_0)$ a singular point if and only if
$$
\lim_{r\downarrow 0} \frac{u(x_0 + rx,t_0 + r^2t)}{r^2} = p_{2, x_0, t_0} (x)
$$
where $p_{2, x_0, t_0}(x)$ is a quadratic polynomial of the form $\frac{1}{2}x^TAx$, with $A$ being a symmetric nonnegative definite matrix such that $\text{tr}(A) = f(x_0)$. In both cases the convergence is in $C^{1, \alpha}_x \cap C^{0, \alpha}_t$ locally in $\R^{n+1}$. We will also denote by $p_2(x) := p_{2, 0, 0}(x)$.

This classification can be found for example in \cite[Theorem 5.44]{FR22} for the elliptic obstacle problem and \cite[Theorem 1.1]{AK23} for the fully nonlinear parabolic obstacle problem. 

It is known that the free boundary is $C^\infty$ in a neighborhood of regular points; however, its behavior can be significantly more complicated near singular points. In \cite{Sch77} it was shown that, in the elliptic obstacle problem, if $f$ is not strictly positive, the free boundary can be an arbitrary closed set; in particular, a set of dimension $n$. Moreover, under the hypothesis $f > 0$, the free boundary can be any closed subset of a hyperplane; in particular, a set of dimension $n-1$. For a fixed time similar constructions can be done in the parabolic obstacle problem.
  
\subsection{Ideas of the proof}

If $(x_0, t_0)$ is a singular point, we have the following expansion
\begin{equation}
    \label{eq:intro_basic_expansion}
    u(x_0 + x, t_0 + t) = p_{2, x_0, t_0}(x) + o(|x|^{2} + |t|),
\end{equation}

where $p_{2, x_0, t_0}(x)$ is the polynomial introduced previously. Since a singular point $(x_0,t_0)$ is characterized by the existence of such a polynomial $p_{2, x_0, t_0}$, a natural way to classify singular points is by using the dimension of $\{p_{2, x_0, t_0} = 0\}$. For $m \in \{0, 1, 2, ..., n-1 \}$ we define
$$
\Sigma_m := \left\{ (x_0, t_0) \in \Sigma: \dim\{p_{2, x_0, t_0} = 0\} = m\right\}, 
$$
hence the singular set decomposes as $\Sigma = \bigcup_{m=0}^{n-1} \Sigma_m$.

 Several ingredients in our proof refine techniques introduced in \cite{FRS24}. We develop a sharper version of \eqref{eq:intro_basic_expansion} in the top stratum $\Sigma_{n-1}$. To do this, we will use a truncated version of the frequency function
 $$
\phi^\gamma(r, w) := \frac{r^2\int_{\{t = -r^2\}} |\nabla(u - p_2)|^2 G dx + \gamma r^{2\gamma} }{\int_{\{t = -r^2\}} (u - p_2)^2 G dx + \gamma r^{2\gamma}}
 $$
 where $\gamma$ is a parameter to be chosen. Morally, this truncated version returns the value $\gamma$ if the frequency is very high. In this case, we will often refer to it as "frequency saturated". The authors of \cite{FRS24} proved that when $f \equiv 1$ the frequency function is almost-monotone for any value of $\gamma \in (2, \infty)$; however, this is not true in our case since, in our initial estimates, the frequency function is almost-monotone only for $\gamma \in \left(2, \frac{5}{2} \right)$. This constitutes the first step of our argument. Using this almost-monotonicity, we can improve the error term of \eqref{eq:intro_basic_expansion}, resulting in an error of $o(|x|^\frac{5}{2} + |t|^\frac{5}{4})$. With this improvement in the error term, we have sharper estimates and an almost-monotonicity formula can be obtained for $\gamma \in \left(2, \frac{11}{4} \right)$. Iterating this argument, we obtain an almost-monotonicity formula for $\gamma \in \left(2, 3 - \eps \right)$.

Once this extended almost-monotonicity is established, the proof of Theorem \ref{thm:main_theorem} proceeds as follows.
If $(x_0,t_0) \in \Sigma_m$ with $m \le n-2$, the expansion \eqref{eq:intro_basic_expansion} cannot be improved. Nevertheless, a barrier argument similar to \cite{FRS20,FRS24} yields $\dim_{\mathrm{par}}(\Sigma_m) \le n-2$.
If instead $(x_0,t_0) \in \Sigma_{n-1}$, the improved almost-monotonicity allows us to strengthen the expansion to
 \begin{equation}
\label{eq:target_improved_expansion_formula}
 u(x_0 + x, t_0 + t) = p_{2, x_0, t_0}(x) + o(|x|^{3 - \eps} + |t|^{(3 - \eps)/2}),
 \end{equation}
for any $\eps >0$.
This refined expansion implies $\dim_{\mathrm{par}}(\Sigma_{n-1}) \le n-1$, and combining the two cases completes the proof.

A key observation allowing us to assume only that $f$ is Lipschitz is the following: if a subharmonic function is Lipschitz in all but one direction, then it is $C^{0,\alpha}$ in that remaining direction for every $\alpha \in (0,1)$. This is proved in detail in \autoref{prop:cc_argument}. We expect Lipschitz regularity of $f$ to be the minimal regularity assumption under which \eqref{eq:target_improved_expansion_formula} holds.

\subsection{Organization of the paper} 

In Section \ref{sec:notation_and_formulas} we introduce the main notation and definitions and collect several preliminary results used throughout the paper. In Section \ref{sec:weiss_mon_form} we introduce several almost-monotonicity formulas and some crucial estimates. In Section \ref{sec: 2nd_blowup_top_stratum} we analyze the second blow-up in the top stratum, i.e. at the critical points belonging to $\Sigma_{n-1}$. Finally, in Section \ref{sec:proof_of_main_thm}, we combine the results from the previous sections to estimate the size of the singular set and prove \autoref{thm:main_theorem}.

\subsection{Acknowledgments} AM and XR were supported by the European Research Council under the Grant Agreement No. 101123223 (SSNSD), and by AEI project PID2024-156429NB-I00 (Spain). XR was also supported by the AEI-DFG project PCI2024-155066-2 (Spain-Germany), the AEI grant RED2024-153842-T (Spain), and by the Spanish State Research Agency through the Mar\'ia de Maeztu Program for Centers and Units of Excellence in R\&D (CEX2020-001084-M).

\smallskip
\section{Notation and Preliminary Results}
\label{sec:notation_and_formulas}

In this section, we introduce the notation and collect some preliminary results that will be used throughout the paper. Since this article generalizes the first main theorem in \cite{FRS24}, we adopt similar notation.

\subsection{Notation and definitions}
We start by introducing some notation.
\subsection*{Operators} We define the following operators 
\begin{align*}
\textbf{H} := (\Delta - \partial_t) \qquad \text{and} \qquad Z:= (x \cdot \nabla + 2t\partial_t). 
\end{align*}
We can rewrite parabolic obstacle problem \eqref{eq:parabolic_obstacle_problem} using them
\begin{align}
\label{eq:stefan_problem}
\begin{cases}
    \textbf{H}u = f \chi_{\{u > 0\}} & \\
    u \geq 0  &\qquad \text{in } B_1 \times (-1, 1) \\
    \partial_t u > 0 \quad \text{in } \{u > 0\}
\end{cases}
\end{align}
\subsection*{Bilinear form} For $r \in (0, 1]$ and functions $g, h: \R^n \times (-1, 0) \rightarrow \R$ we define the following bilinear form
\begin{align*}
    \langle g, h\rangle_r := \int_{\R^n} (g h)(x, -r^2)G(x, -r^2) dx
\end{align*}

where $G(x, t)$ is the reversed heat kernel defined as follows
\begin{align*}
    G(x, t) := \frac{1}{(4\pi t)^{n/2}} e^{|x|^2/(4t)}.
\end{align*}
In our applications $g$ and $h$ will be at least continuous although the bilinear form can be defined for more general functions.

\subsection*{Functionals D and H} We also define the following functionals, which correspond to dimensionless quantities, and are the parabolic version of the functionals that where introduced in the elliptic problem at \cite{FS17} and adapted to the Stefan problem in \cite{FRS24}.
\begin{align*}
D(r, w) := 2r^2 \langle \nabla w, \nabla w \rangle_r \qquad \text{and} \qquad H(r, w) := \langle w, w \rangle_r
\end{align*}
where $w:\R^n \times (-1, 0) \rightarrow \R$ will be, in our applications, at least $C^{1, 1}_x \cap C^{0, 1}_t(B_1 \times (-1, 1))$.

\subsection*{Frequency functions} We introduce the following functions
\begin{align*}
\phi(r, w) := \frac{D(r, w)}{H(r, w)}, \qquad \phi^\gamma(r, w) := \frac{D(r, w) + \gamma r^{2\gamma}}{H(r, w) + r^{2 \gamma}}
\end{align*}
The function $\phi(r, w)$ is known as the parabolic version of Almgren frequency function, and $\phi^\gamma(r,w)$ is a truncated version introduced in \cite{FRS24}.

In the remainder of this section, we introduce several auxiliary notions that will be used throughout the analysis and the geometric setting adapted to the parabolic scaling.

\subsection*{Parabolic rescaling} Following the standard notation for parabolic PDE problems given a function $w: \R^n \times (-1, 0) \to \R$ we define
$$
w_r(x, t):= w(rx, r^2t)    
$$ for $r > 0$.

Note that $Z(w_r)(x, t) = (Zw)(rx, r^2t)$ and $\mathbf{H}(w_r)(x,t) = r^2 (\mathbf{H}w)(rx, r^2t)$.

\subsection*{Spatial cut-off} We fix a smooth spatial cut-off function $\zeta:\R^n \to \R$ verifying $\zeta \in C^\infty_c(B_{1/2})$ and $\zeta \equiv 1$ in $B_{1/4}$.

\subsection*{Family of polynomials} Let $\mathcal{P}$ denote the set of nonnegative $2$-homogeneous polynomials in the $x$ variable which satisfy $\Delta p = f(0)$. In particular any $p \in \mathcal{P}$ satisfies the parabolic obstacle problem \eqref{eq:stefan_problem} with $f(0)$ instead of $f$. 

Throughout the paper, we will frequently work on parabolic cylinders, which reflect the natural scaling of the problem. We denote them as follows:
\subsection*{Parabolic cylinders} Given $r > 0$ we define the parabolic cylinder $\gC_r$ as
$$
\gC_r := B_r \times(-r^2, 0).
$$
It will also be suitable to measure distance in space-time with respect to the parabolic geometry associated with the heat operator. For this purpose, we introduce the parabolic distance:
\subsection*{Parabolic distance and parabolic Hausdorff dimension} Given $(x, t), (y, s) \in \R^n \times \R$, we define the parabolic distance by
$$
\text{dist}_\text{par} \left( (x, t), (y, s)\right) := |x - y| + |t - s|^{1/2}.
$$
The parabolic Hausdorff dimension is the Hausdorff dimension associated with the parabolic distance; see \cite[Chapter 4]{Mat15}.

\subsection*{Spatial projection} We denote by $\pi_x: \R^n \times \R \to \R$ the canonical projection $\pi_x(x, t) := x$.

\subsection{Preliminary results}
\label{sec:preliminary_results}
We present some formulas that will be useful in the proofs of Section \ref{sec:weiss_mon_form} and several auxiliary results that will be used throughout the paper.

\begin{lemma} 
    \label{lem:formulas_for_H&D}
    For $w \in C^1$, the following holds:
    \begin{enumerate}
        \item $H'(r, w) = 2 r^{-1}\langle w, Zw\rangle_r$ 
        \item $D(r, w) = \langle w, Zw \rangle_r -2 r^2\langle w, \textbf{H} w\rangle_r$
        \item $D'(r, w) = 2r^{-1} \langle Zw, Zw\rangle_r - 4r \langle Zw, \textbf{H} w\rangle_r$
    \end{enumerate}
\end{lemma}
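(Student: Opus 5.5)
The plan is to change variables so that the $r$-dependence moves from the heat kernel into the argument of $w$, and then differentiate under the integral sign. Setting $x = ry$ and using $G(ry,-r^2)\,r^n = G(y,-1)=:G_1(y)$, we get
\[
\langle g,h\rangle_r=\int_{\R^n} g(ry,-r^2)\,h(ry,-r^2)\,G_1(y)\,dy .
\]
The basic identity is the chain rule
\[
\frac{d}{dr}\big[w(ry,-r^2)\big]=y\cdot\nabla w-2r\,\partial_t w=r^{-1}(Zw)(ry,-r^2),
\]
which holds because at $(x,t)=(ry,-r^2)$ we have $Zw=x\cdot\nabla w+2t\partial_t w=r\,y\cdot\nabla w-2r^2\partial_t w$. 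Differentiating $H(r,w)=\int w(ry,-r^2)^2G_1\,dy$ with this identity gives (1) directly.

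For (2), I integrate by parts in $x$ at the fixed time $t=-r^2$. Since $\nabla G=\frac{x}{2t}G=-\frac{x}{2r^2}G$ there,
\[
2r^2\langle\nabla w,\nabla w\rangle_r=-2r^2\langle w,\Delta w\rangle_r+\langle w,x\cdot\nabla w\rangle_r .
\]
Now write $\Delta w=\mathbf H w+\partial_t w$, so that
\[
-2r^2\langle w,\Delta w\rangle_r=-2r^2\langle w,\mathbf H w\rangle_r-2r^2\langle w,\partial_t w\rangle_r .
\]
Because $2t=-2r^2$, the term $-2r^2\langle w,\partial_t w\rangle_r$ combines with $\langle w,x\cdot\nabla w\rangle_r$ to give exactly $\langle w,Zw\rangle_r$, which is (2).

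For (3), set $W(y):=w(ry,-r^2)$, so that $\nabla_yW=r\nabla w$ and $D(r,w)=2\int|\nabla_yW|^2G_1\,dy$. Differentiating in $r$ and using the chain-rule identity, $\partial_rW=r^{-1}(Zw)(ry,-r^2)$, so
\[
D'=4\int \nabla_yW\cdot\nabla_y\big(r^{-1}Zw\big)\,G_1\,dy .
\]
Integrating by parts with $\nabla G_1=-\tfrac y2 G_1$ gives
\[
D'=-4r^{-1}\big\langle\, r^2\Delta w-\tfrac12 x\cdot\nabla w,\;Zw\big\rangle_r .
\]
Next, $r^2\Delta w-\tfrac12 x\cdot\nabla w=r^2\mathbf Hw+r^2\partial_tw-\tfrac12x\cdot\nabla w$. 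Since $r^2\partial_t w=-t\,\partial_t w$, the last two terms equal $-\tfrac12 Zw$, so
\[
r^2\Delta w-\tfrac12 x\cdot\nabla w=r^2\mathbf Hw-\tfrac12 Zw .
\]
Substituting this yields $D'=2r^{-1}\langle Zw,Zw\rangle_r-4r\langle Zw,\mathbf Hw\rangle_r$, which is (3).

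The main obstacle is justification rather than algebra. The integrations by parts over all of $\R^n$, and the differentiation under the integral sign, require that boundary terms vanish and that there are integrable dominating functions. These are supplied by the Gaussian decay of $G_1$, provided $w$, $\nabla w$, $D^2w$ and $\partial_tw$ grow at most polynomially (or $w$ is compactly supported after multiplication by the cut-off $\zeta$). Moreover, (2) and (3) involve $\Delta w$ and $\mathbf Hw$, so they need $w\in C^{1,1}_x\cap C^{0,1}_t$ as in the paper's applications, not merely $C^1$. In that setting the second derivatives exist only a.e.; I would first prove the identities for smooth $w$ and then pass to the limit by mollification, using dominated convergence with the Gaussian weight.
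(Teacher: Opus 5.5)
Your proposal is correct, and it is essentially the paper's proof written out: the change of variables $x=ry$ against $G(\cdot,-1)$, the chain rule $\partial_r[w(ry,-r^2)]=r^{-1}(Zw)(ry,-r^2)$, and integration by parts with $\nabla G=\frac{x}{2t}G$ are the standard computations behind \cite[Lemma 4.3, Lemma 4.5]{FRS24}, which the paper only cites in rescaled form. Your regularity caveat is also accurate: (2) and (3) involve $\Delta w$ and $\partial_t w$, so the stated hypothesis $w\in C^1$ is too weak, and for $w\in C^{1,1}_x\cap C^{0,1}_t$ with compact support or Gaussian-controlled growth, your mollification argument gives (3) in integrated form, i.e.\ for a.e.\ $r$.
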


\begin{proof}
    The formulas are the rescaled version of \cite[Lemma 4.3, Lemma 4.5]{FRS24}.
\end{proof}

\smallskip
\begin{lemma}[\cite{FRS24}, Lemma 4.6]
\label{lem:frequency_error_formula}
\begin{align*}
\diff{}{r}\phi^\gamma(r, w) \geq \frac{2}{r}\frac{\left(\langle Zw, Zw \rangle_r \langle w, w \rangle_r - \langle w, Zw \rangle_r^2\right) + \left(2r^2\langle w, \textbf{H}w \rangle_r\right)^2 + E^\gamma(r, w) }{\left( H(r, w) + r^{2\gamma}\right)^2}
\end{align*}
where
\begin{align*}
    E^\gamma(r,w) := 2r^2 \langle w, \bH w \rangle_r \left(D(r, w) + r^{2\gamma} \right) - 2r^2 \langle Zw, \textbf{H}w \rangle_r \left( H(r, w) + r^{2\gamma} \right).
\end{align*}
\end{lemma}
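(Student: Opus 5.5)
We differentiate the quotient $\phi^\gamma = N/M$, where $N := D(r,w) + \gamma r^{2\gamma}$ and $M := H(r,w) + r^{2\gamma}$, and use \autoref{lem:formulas_for_H&D} to rewrite every term through the bilinear form. We have
$$\frac{d}{dr}\phi^\gamma = \frac{N' M - N M'}{M^2},$$
with $N' = D' + 2\gamma^2 r^{2\gamma-1}$ and $M' = H' + 2\gamma r^{2\gamma-1}$. For brevity write $A := \langle w,w\rangle_r$, $B := \langle w, Zw\rangle_r$, $C := \langle Zw,Zw\rangle_r$, $a := 2r^2\langle w,\bH w\rangle_r$ and $b := 2r^2\langle Zw,\bH w\rangle_r$. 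Then \autoref{lem:formulas_for_H&D} gives
$$D = B - a, \qquad H' = \tfrac{2}{r}B, \qquad D' = \tfrac{2}{r}(C - b),$$
so after multiplying by $r/2$ it suffices to show
$$\tfrac{r}{2}\big(N'M - NM'\big) \ge (CA - B^2) + a^2 + E^\gamma.$$

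We split the left-hand side into the non-truncated part and the part carrying $r^{2\gamma}$.

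\textbf{Non-truncated part.} This is $(C-b)A - (B-a)B = CA - B^2 - bA + aB$.

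\textbf{Truncation terms.} These are
$$\gamma^2 r^{2\gamma}A + \gamma^2 r^{4\gamma} + (C-b)r^{2\gamma} - \gamma r^{2\gamma}B - \gamma r^{4\gamma}\gamma - (B-a)\gamma r^{2\gamma}.$$
The $r^{4\gamma}$ terms cancel, and collecting gives
$$r^{2\gamma}\big(\gamma^2 A - 2\gamma B + C\big) - b\,r^{2\gamma} + a\gamma r^{2\gamma}.$$

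\textbf{Comparison with the target.} Expanding the target, $E^\gamma = a(D + r^{2\gamma}) - b(A + r^{2\gamma}) = a(B-a) + a r^{2\gamma} - bA - b r^{2\gamma}$. Adding $a^2$ gives $aB + a r^{2\gamma} - bA - b r^{2\gamma}$. The difference (left-hand side minus target) is therefore
$$r^{2\gamma}\langle Zw - \gamma w,\, Zw - \gamma w\rangle_r + (\gamma - 1)\,a\,r^{2\gamma}.$$
The first term is nonnegative. Notice that the paper's $E^\gamma$ uses $D + r^{2\gamma}$ rather than $D + \gamma r^{2\gamma}$, which is exactly where the leftover $(\gamma-1)a r^{2\gamma}$ comes from.

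\textbf{Sign of the leftover term.} This is the delicate point. In \cite{FRS24} it is handled either by the sign $\langle w,\bH w\rangle_r \ge 0$ or by absorbing it. In the application $w = u - p_2$, we have $w\,\bH w = (u-p_2)\big(f\chi_{\{u>0\}} - f(0)\big)$, which need not be signed. So I would rather check the algebra against the exact definition in \cite[Lemma 4.6]{FRS24}. If the stated inequality is meant verbatim, I expect the correct bookkeeping to be
$$\gamma r^{2\gamma}\cdot a - r^{2\gamma}\cdot a\cdot(\text{from }E^\gamma).$$
In that case the honest statement carries $a(\gamma-1)r^{2\gamma}$, which one either includes in $E^\gamma$ or controls by Cauchy--Schwarz:
$$|(\gamma-1)a\,r^{2\gamma}| \le \tfrac12 a^2 + C r^{4\gamma}.$$

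The main obstacle is thus only this bookkeeping of the truncation terms. Apart from it, the identity is a direct computation together with the Cauchy--Schwarz-type completed square $\langle Zw-\gamma w, Zw-\gamma w\rangle_r \ge 0$.
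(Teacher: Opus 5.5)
Your computation is correct, and it is the same direct differentiation that underlies \cite[Lemma 4.6]{FRS24}; the paper gives no proof beyond that citation. In your notation it gives the exact identity
\[
\frac{r}{2}\bigl(H+r^{2\gamma}\bigr)^2\frac{d}{dr}\phi^\gamma(r,w)=(CA-B^2)+a^2+\Bigl[a\bigl(D+\gamma r^{2\gamma}\bigr)-b\bigl(H+r^{2\gamma}\bigr)\Bigr]+r^{2\gamma}\langle Zw-\gamma w,Zw-\gamma w\rangle_r .
\]
The lemma then follows at once by dropping the last, nonnegative term, provided $E^\gamma$ is defined with $D(r,w)+\gamma r^{2\gamma}$. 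That is the intended definition. The paper itself uses it in the proof of \autoref{lem:frequency_formula_monotonicity}, where $D(r,w)+\gamma r^{2\gamma}$ is rewritten as $\phi^\gamma(r,w)\,(H(r,w)+r^{2\gamma})$. The missing factor $\gamma$ in the displayed definition is a typo, and you should simply commit to this reading.

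Where your proposal goes wrong is in treating the leftover $(\gamma-1)a\,r^{2\gamma}$ as something that could be absorbed. With the printed $E^\gamma$ the inequality is false for every $\gamma>1$. To see this, take $Zw=\gamma w$:
\begin{itemize}
\item $D$, $H$ and $r^{2\gamma}$ all scale like $r^{2\gamma}$, so $\phi^\gamma(\cdot,w)$ is constant and the left-hand side is $0$.
\item On the right, $CA-B^2=0$, and the numerator reduces to $(1-\gamma)a\,r^{2\gamma}$.
\item Since $D=B-a=\gamma H-a$, we have $a<0$, so the right-hand side is strictly positive, whenever $D(r,w)>\gamma H(r,w)$.
\end{itemize}
A concrete case is $w(x,t)=(-t)^{\gamma/2}h(x/\sqrt{-t})$ with $h$ a Hermite polynomial of degree $k>\gamma$ for the measure $G(x,-1)\,dx$, for which $D/H=k$.

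Consequently, your Cauchy--Schwarz bound $|(\gamma-1)a\,r^{2\gamma}|\le\frac12a^2+Cr^{4\gamma}$ proves a different, weaker inequality, with $\frac12a^2$ in place of $a^2$ and an extra $-Cr^{4\gamma}$; it does not prove the stated one. The sign of $\langle w,\bH w\rangle_r$ also plays no role in this lemma. It only enters later, when $E^\gamma$ itself is estimated, here via \autoref{lem:cubic_estimates} in the proof of \autoref{lem:frequency_formula_monotonicity}. Finally, no Cauchy--Schwarz step is needed anywhere: $CA-B^2$ is kept on the right-hand side, and $\langle Zw-\gamma w,Zw-\gamma w\rangle_r\ge0$ is just positivity of the form.
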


We have the following result for the singular set, that is a consequence of Theorem 1.9 in \cite{LM15}.
\begin{theorem}
    \label{thm:cover_of_singular_projection}
    Let $u\in C_x^{1, 1} \cap C_t^{0, 1}\left(B_1 \times (-1, 1)\right)$ solve \eqref{eq:stefan_problem}. Denote by $\Sigma$ the set of singular points. Then $\pi_x(\Sigma) \subset B_1$ can be locally covered by a $(n-1)$-dimensional $C^1$ manifold.
\end{theorem}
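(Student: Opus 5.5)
The plan is to deduce \autoref{thm:cover_of_singular_projection} from the spatial covering result \cite[Theorem 1.9]{LM15}, which gives, at each fixed time $t$, that $\Sigma_t := \{x : (x,t)\in\Sigma\}$ is locally contained in an $(n-1)$-dimensional $C^1$ manifold. The content of the statement is that the \emph{projection} $\pi_x(\Sigma)$, i.e.\ the union over all times, is still covered by one such manifold. The key structural input is the monotonicity $\partial_t u \ge 0$. Together with $u\ge0$, it forces each spatial point to be a free boundary point for at most one time. Indeed, if $(x_0,t_1),(x_0,t_2)\in\Sigma$ with $t_1<t_2$, then $u(x_0,t)=0$ for $t\in[t_1,t_2]$. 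On the other hand, $\partial_t u>0$ in $\{u>0\}$ together with the nondegeneracy of the blow-up at $(x_0,t_2)$ (the polynomial $p_{2,x_0,t_2}$ has $\operatorname{tr}A=f(x_0)>0$, so $u>0$ at points arbitrarily close to $(x_0,t_2)$ in the past) gives a contradiction once we use that $\{u=0\}$ is nondecreasing in time. Hence $\pi_x$ is injective on $\Sigma$, and $\Sigma$ is a graph $t=\tau(x)$ over $\pi_x(\Sigma)$.

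Next I would show that the Liouville-type uniqueness and the continuous dependence of blow-ups transfer across times. Following the elliptic argument in \cite[Ch.~7]{PSU12} and its parabolic version in \cite{LM15}, the map $(x_0,t_0)\mapsto p_{2,x_0,t_0}$ is continuous on $\Sigma$. Moreover, the Weiss/Monneau monotonicity gives a uniform modulus of convergence:
\[
\sup_{\gC_r(x_0,t_0)}|u(x_0+\cdot,t_0+\cdot)-p_{2,x_0,t_0}|\le \sigma(r)r^2 .
\]
From this, for two singular points $(x_0,t_0),(x_1,t_1)$, comparing the two expansions at the intermediate scale $r\sim |x_0-x_1|+|t_0-t_1|^{1/2}$ yields
\[
|p_{2,x_0,t_0}-p_{2,x_1,t_1}|\le \sigma'\big(|x_0-x_1|+|t_0-t_1|^{1/2}\big).
\]

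Then apply Whitney's extension theorem to the data on the compact set $K=\pi_x(\Sigma\cap \overline{\gC_{1/2}})$, namely the function $0$ and its derivatives of order two given by $A(x_0)$. This requires a Whitney-type compatibility estimate of the form $|p_{2,x_0,\tau(x_0)}(x-x_0)|\lesssim \sigma(|x-x_0|)|x-x_0|^2$ for $x,x_0\in K$. The main obstacle is precisely that consecutive spatial points may sit at very different times. The $o(r^2)$ expansion at $(x_0,\tau(x_0))$ controls $u(x,\tau(x_0))$, but not $u(x,\tau(x))$, and only the latter vanishes. I would use $\partial_t u\ge0$ in the following way:
\begin{itemize}
\item if $\tau(x)\ge\tau(x_0)$, then $0=u(x,\tau(x))\ge u(x,\tau(x_0))$, so $u(x,\tau(x_0))=0$ and the expansion at $(x_0,\tau(x_0))$ directly gives $p_{2,x_0,\tau(x_0)}(x-x_0)=o(|x-x_0|^2)$;
\item otherwise, swap the roles of $x$ and $x_0$.
\end{itemize}
This gives the symmetric estimate needed. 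Together with the continuity of $A$ along $K$ (from the estimate in the previous paragraph, restricted to the comparable pair), it yields a $C^2$ function $F$ with $F=|\nabla F|=0$ on $K$ and $D^2F(x_0)=A(x_0)$.

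Finally, conclude by the implicit function theorem. Since $A(x_0)\ne0$ (its trace is $f(x_0)>0$), some direction $e$ has $\partial_{ee}F(x_0)>0$. Near $x_0$, the set $K\subset\{\partial_eF=0\}$ is therefore a $C^1$ hypersurface. A finite covering argument gives the local covering of $\pi_x(\Sigma)$ by $(n-1)$-dimensional $C^1$ manifolds.
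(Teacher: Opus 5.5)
Your Whitney/implicit-function-theorem route (Caffarelli's argument, using $\partial_t u\ge0$ to compare jets taken at different times) is a reasonable self-contained alternative to the paper, which gives no argument and simply cites \cite[Theorem 1.9]{LM15}. As written, however, it has a gap at the injectivity step, and that gap propagates to the continuity of $A$.

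\textbf{The injectivity step.} The monotonicity is reversed: $\partial_t u\ge0$ makes the contact sets $\{u(\cdot,t)=0\}$ \emph{nonincreasing} in $t$. More importantly, nothing you list yields a contradiction. If $(x_0,t_1),(x_0,t_2)\in\Sigma$ with $t_1<t_2$, then $u(x_0,\cdot)\equiv0$ on $[t_1,t_2]$. Positivity of $u$ at points near $(x_0,t_2)$ in the past is perfectly compatible with this. Injectivity is in fact true, but it needs a quantitative input. The inequality $u(\cdot,t_1)\le u(\cdot,t_2)$ together with equal traces forces $p_{2,x_0,t_1}=p_{2,x_0,t_2}$, hence $u(x_0+x,t_2)-u(x_0+x,t_1)=o(|x|^2)$. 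This must then be contradicted by a barrier lower bound $\partial_t u\gtrsim|x|^{N}$, $N<2$, in the cone where $p_{2,x_0,t_1}>0$, integrated over $[t_1,t_2]$, as in \autoref{lem:cuad_cleaning_Sm}.

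\textbf{Why it matters.} You need the second-order Whitney condition $|A(x)-A(x_0)|\to0$ uniformly on $K$. You also need it to symmetrize your zeroth-order estimate, since monotonicity only controls the jet of the \emph{earlier} point. You justify it by the space-time modulus $\sigma'(|x_0-x_1|+|t_0-t_1|^{1/2})$, which says nothing when $\tau(x)$ and $\tau(x_0)$ are far apart. That estimate would suffice if $\tau$ were continuous on $K$, which follows from injectivity and compactness of $\Sigma\cap\overline{\gC_{1/2}}$. But injectivity is exactly what is not proved.

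\textbf{A repair.} You can avoid injectivity altogether by applying your monotonicity trick to the Hessians. Let $(x,t),(x_0,t_0)\in\Sigma$ with $t\ge t_0$, so $u(\cdot,t_0)\le u(\cdot,t)$. Insert the two uniform spatial expansions at points $y\in\partial B_r(x_0)$ with $|x-x_0|\ll r\ll1$. This gives $p_{2,x_0,t_0}(z)\le p_{2,x,t}(z)+o(1)$ for $|z|=1$, that is, $A(x_0)\le A(x)+o(1)\,\mathrm{Id}$. Since $f$ depends only on $x$, also $\operatorname{tr}A(x)-\operatorname{tr}A(x_0)=f(x)-f(x_0)=O(|x-x_0|)$. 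A symmetric matrix bounded below by $-\epsilon\,\mathrm{Id}$ with trace $O(|x-x_0|)$ has norm $O(\epsilon+|x-x_0|)$. Hence $|A(x)-A(x_0)|=o(1)$ uniformly on $K$, whatever the times are. Taking $x=x_0$ shows that $A$ is well defined on $K$ even if several singular times share a spatial point, so $\tau$ is never needed.

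With this, your one-sided zeroth-order estimate becomes symmetric. The first-order condition $|A(x_0)(x-x_0)|=o(|x-x_0|)$ then follows from it by positive semidefiniteness. The Whitney extension and implicit function theorem steps go through as you describe. Finally, the per-time result you attribute to \cite[Theorem 1.9]{LM15} is never actually used in your argument.
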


The following lemma was proved for $f \equiv 1$ in \cite{Caf77} and adapted to the fully nonlinear parabolic obstacle problem in \cite[Theorem 5.1]{AK23}.
\smallskip
\begin{lemma}
    \label{lem:constant_simplification_bound}
    Let $u: B_1 \times (-1, 1) \rightarrow [0, \infty)$ be a bounded solution of \eqref{eq:stefan_problem} and $(0, 0) \in \partial\{u = 0\}$ then
    $$
    \sup_{B_{1/2 \times (-1/2, 0)}} |D^2u| + |\partial_t u| \leq C\|u(\cdot, 0)\|_{L^\infty(B_1)},
    $$
    where $C$ is a constant depending only on $n$.
\end{lemma}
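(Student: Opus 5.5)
Set $M:=\|u(\cdot,0)\|_{L^\infty(B_1)}$. The plan is to (i) bound $\partial_t u$ and $\sup_t u$ in terms of $M$ using monotonicity in time, and then (ii) bound $D^2u$ by interior $W^{2,p}$/$C^{1,1}$ estimates for the obstacle-type equation. Since the constant in Lemma~\ref{lem:constant_simplification_bound} is claimed to depend only on $n$, I implicitly normalize $f$, say $0<f\le 1$ and $\|f\|_{C^{0,1}}\le 1$; otherwise the constant will also depend on these bounds.

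\emph{Step 1 (sup bound for negative times).} Since $\partial_t u\ge 0$, we have $0\le u(x,t)\le u(x,0)\le M$ for all $t\in(-1,0]$. Hence $u$ is controlled by $M$ on all of $B_1\times(-1,0]$.

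\emph{Step 2 (time derivative).} On $\{u>0\}$ the function $w:=\partial_t u\ge 0$ satisfies $\mathbf{H} w=\partial_t(f\chi_{\{u>0\}})$. When $f$ is independent of $t$, as in \eqref{eq:parabolic_obstacle_problem}, this gives $\mathbf{H}w=0$ in $\{u>0\}$, and $w=0$ on the free boundary. So $w$ is a nonnegative caloric function in $\{u>0\}$ vanishing on $\partial\{u>0\}$, and $w$ is subcaloric in the whole cylinder in the weak sense.

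To bound $w$ by $M$, I would follow Caffarelli: consider $\int_{-1}^0 w\,dt=u(x,0)-u(x,-1)\le M$. This gives an $L^1$ bound on $w$ in $B_1\times(-1,0)$. Subcaloricity and the local maximum principle then yield $\sup_{B_{3/4}\times(-3/4,0)} w\le CM$. The delicate point is that the mean-value bound needs the cylinder to extend into the future; I would handle this by using the backward cylinders, which are exactly where the $L^1$ bound holds. An alternative is to compare $u(x,t+h)-u(x,t)$, divided by $h$, with a barrier.

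\emph{Step 3 (second derivatives).} For fixed $t$, $u(\cdot,t)$ solves the elliptic obstacle problem
\[
\Delta u=f\chi_{\{u>0\}}+\partial_t u
\]
with right-hand side bounded by $1+CM$. By $W^{2,p}$ estimates and the quadratic growth of solutions near the free boundary — nondegeneracy $\sup_{B_r(x_0)}u\ge cr^2$ together with the optimal growth $u\le C(M+1)r^2$ — we obtain $|D^2u|\le C(M+1)$. This uses $(0,0)\in\partial\{u=0\}$ together with Caffarelli's $C^{1,1}$ argument: second derivatives solve heat equations in $\{u>0\}$, and the growth bound plus interior estimates at scale $d(x)$, the distance to the free boundary, bound them.

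\emph{Removing the additive constant.} To get exactly $CM$, with no additive term, I would use scaling together with the fact that $(0,0)$ is a free boundary point. By nondegeneracy, $M\ge c\,\inf f>0$, so the additive constant is absorbed into $CM$. This is why the constant depends on bounds for $f$, which are implicitly normalized.

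I expect Step 2, the bound on $\partial_t u$ with only past information, to be the main obstacle.
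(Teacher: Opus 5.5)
The paper does not prove this lemma; it quotes \cite{Caf77} (for $f\equiv 1$) and \cite[Theorem 5.1]{AK23}. Your outline follows the classical scheme behind those references: bound $\partial_t u$, get $C^{1,1}$ in $x$ from quadratic growth plus interior estimates, then absorb the additive constant by nondegeneracy. You are also right that $C$ must depend on $f$, through $\inf f$ and $\|f\|_{C^{0,1}}$; it is purely dimensional only for constant $f$, by scaling. However, you have put the difficulty in the wrong step.

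\emph{Step 2 is routine.} The local maximum principle for subcaloric functions uses only backward cylinders $Q^-_\rho(x_0,t_0):=B_\rho(x_0)\times(t_0-\rho^2,t_0]$, and that is exactly where your bound $\int_{-1}^0\partial_t u\,dt\le M$ lives. Justify subcaloricity distributionally: $\mathbf{H}(\partial_t u)=f\,\partial_t\chi_{\{u>0\}}\ge 0$, because $t\mapsto\chi_{\{u>0\}}(x,t)$ is nondecreasing. Do not argue via ``$\partial_t u=0$ on the free boundary'', which presupposes continuity of $\partial_t u$ that is not available a priori.

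\emph{The genuine gap is Step 3.} At fixed $t$, $\Delta u(\cdot,t)=(f+\partial_t u)\chi_{\{u>0\}}$ is merely bounded. $W^{2,p}$ estimates plus quadratic growth then give $C^{1,\alpha}$, not $|D^2u|\le C$: a bounded Laplacian does not bound the Hessian, and the time slice alone carries no further information on $\partial_t u(\cdot,t)$. So you need interior estimates for $\mathbf{H}u=f$, and these require a backward cylinder contained in $\{u>0\}$. The spatial distance $d$ from $x$ to $\{u(\cdot,t)=0\}$ does not provide one: since $\partial_t u\ge 0$, the zero set shrinks forward in time, so it is larger in the past and $B_d(x)\times(t-d^2,t]$ may meet it.

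The missing step has three parts.
\begin{enumerate}
\item[(i)] On each time slice, split $u(\cdot,s)$ on $B_r(y)$ into a harmonic part and a part with zero boundary data, and apply Harnack. This gives $\sup_{B_{r/2}(y)}u(\cdot,s)\le C(1+M)r^2$ whenever $u(y,s)=0$.
\item[(ii)] For $(x,t)\in\{u>0\}$, take the largest $\delta$ (capped at a fixed small radius) with $Q^-_\delta(x,t)\subset\{u>0\}$, and a point $(y,s)\in\overline{Q^-_\delta(x,t)}$ with $u(y,s)=0$. For $(z,\tau)\in Q^-_\delta(x,t)$, monotonicity in time and $0\le\partial_t u\le CM$ give $u(z,\tau)\le u(z,s)+CM\delta^2\le C(1+M)\delta^2$, using (i) with $r=4\delta$.
\item[(iii)] Parabolic Schauder estimates for $\mathbf{H}u=f$ in $Q^-_\delta(x,t)$ then give $|D^2u(x,t)|\le C(1+M)$. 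Note that $\partial_{ee}u$ is not caloric when $f$ is merely Lipschitz, so work with $u$ itself rather than its second derivatives.
\end{enumerate}
It is precisely the time-derivative bound from Step 2 that converts fixed-time growth into growth on backward cylinders. With this step inserted, your argument goes through, including the absorption of the additive constant via nondegeneracy.
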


We will also use the half-Harnack inequality for subcaloric functions (see \cite[Theorem 4.16]{Wa92}):
\begin{lemma}
    \label{lem:half-harnack}
    Let $w: \gC_1 \rightarrow R$ satisfy $\bH w \geq0$. Then
    $$
    \sup_{\gC_{1/2}}w \leq C \left( \int_{\gC_1} (w_+)^\eps\right)^{1/\eps},
    $$
    for some $\eps > 0$ and $C$ depending only on $n$.
\end{lemma}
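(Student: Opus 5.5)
The plan is to argue in two stages. First I prove a local $L^\infty$--$L^1$ bound for nonnegative subcaloric functions. Then I lower the exponent from $1$ to an arbitrary $\eps\in(0,1]$ by a standard interpolation and absorption argument. This actually gives the estimate for every $\eps>0$, with $C=C(n,\eps)$; fixing one $\eps$ gives the statement. Throughout I assume $w$ is continuous, so that all suprema below are finite; the general case should follow by approximation or by working on slightly smaller cylinders.

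\emph{Step 1 (reduction to $w_+$ and the $L^1$ bound).} The function $w_+=\max\{w,0\}$ is again subcaloric, being the maximum of two subcaloric functions, so it suffices to bound $\sup w_+$. For a point $(x_0,t_0)$ and $\rho>0$ such that the backward cylinder $Q_\rho(x_0,t_0):=B_\rho(x_0)\times(t_0-\rho^2,t_0)$ lies in the domain, I want
\[
w_+(x_0,t_0)\le C\rho^{-(n+2)}\int_{Q_\rho(x_0,t_0)} w_+ .
\]
I would get this from the mean value property for subcaloric functions on heat balls. Averaging that identity over a range of radii converts heat balls into a region comparable to $Q_\rho$, with a bounded weight. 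Alternatively, one can run Moser iteration for nonnegative weak subsolutions. That yields the same bound with $L^2$ on the right, and the argument of Step 2 then works verbatim with $2$ in place of $1$.

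\emph{Step 2 (from $L^1$ to $L^\eps$).} Let $1/2\le s<r\le 1$ and set $M(r):=\sup_{\gC_r}w_+$. For $(x_0,t_0)\in\gC_s$ put $\rho=(r-s)/2$. Then $B_\rho(x_0)\subset B_r$ and $t_0-\rho^2\ge -s^2-\rho^2\ge -r^2$, so $Q_\rho(x_0,t_0)\subset\gC_r$. Step 1 gives
\[
M(s)\le C(r-s)^{-(n+2)}\int_{\gC_r}w_+\le C(r-s)^{-(n+2)}M(r)^{1-\eps}\int_{\gC_1}w_+^\eps .
\]
Young's inequality with exponents $1/(1-\eps)$ and $1/\eps$ then gives
\[
M(s)\le \tfrac12 M(r)+C'(r-s)^{-(n+2)/\eps}\Big(\int_{\gC_1}w_+^\eps\Big)^{1/\eps}.
\]
The classical iteration lemma states that a bounded nondecreasing $M$ satisfying $M(s)\le\frac12M(r)+A(r-s)^{-\beta}$ for all $1/2\le s<r\le1$ obeys $M(1/2)\le C(\beta)A$. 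Applying it yields
\[
\sup_{\gC_{1/2}}w\le M(1/2)\le C\Big(\int_{\gC_1}(w_+)^\eps\Big)^{1/\eps}.
\]

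\emph{Main obstacle.} The delicate step is Step 1, specifically making the mean value inequality rigorous for the regularity class in which $w$ is subcaloric. In the paper's applications $w$ is built from $u\in C^{1,1}_x\cap C^{0,1}_t$, so the classical heat-ball argument (differentiating the heat-ball average in the radius) should apply directly. A second point to watch is the finiteness of $M(r)$, which the absorption step needs. It is automatic for continuous $w$ on $\overline{\gC_1}$; otherwise I would first run the argument on $\gC_{1-\delta}$ and let $\delta\to0$.
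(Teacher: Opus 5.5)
Your proof is correct in structure but takes a different route from the paper. The paper gives no proof at all. It quotes Wang's Theorem 4.16, a local maximum principle for viscosity subsolutions of fully nonlinear uniformly parabolic equations, proved with the parabolic ABP/Krylov--Safonov machinery, where the $L^\eps$ norm arises naturally.

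You instead use the linear structure of $\bH$. You prove a local $L^1$ (or $L^2$) bound for the nonnegative subcaloric function $w_+$ on backward cylinders, then lower the exponent by the standard absorption/iteration trick. Step 2 is sound: $Q_\rho(x_0,t_0)\subset\gC_r$ because $r^2-s^2\ge r-s\ge\rho^2$, and Young's inequality plus the Giaquinta-type iteration lemma give the estimate for every $\eps\in(0,1]$ with $C=C(n,\eps)$.

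The citation buys generality: continuous viscosity subsolutions, general operators, and no energy structure. Your argument is elementary and self-contained. It needs $w$ to be a locally bounded weak subsolution, which holds in the paper's application, since the function there is bounded and built from $u\in C^{1,1}_x\cap C^{0,1}_t$.

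One justification in Step 1 is wrong. Averaging the heat-ball mean value formula over radii does \emph{not} give a bounded weight. The vertex $(x_0,t_0)$ lies on the boundary of every heat ball. Inside each heat ball the kernel $|x_0-y|^2/(4(t_0-s)^2)$ reaches size of order $\tau^{-1}\log(1/\tau)$ as $\tau=t_0-s\to0$. Averaging over $r\in[\rho/2,\rho]$ cannot remove this, because points close to the vertex belong to all of these heat balls.

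There are three ways to repair it:
\begin{enumerate}
\item Rely on your Moser alternative.
\item Use a cutoff representation formula. Take $\eta$ vanishing near the parabolic boundary of $Q_\rho(x_0,t_0)$, with $\eta\equiv1$ at parabolic distance $<\rho/2$ from the vertex. Let $\Gamma$ be the heat kernel and write $w_+(x_0,t_0)=-\iint\Gamma(x_0-y,t_0-s)\,\bH(\eta w_+)\,dy\,ds$. Drop the term $-\Gamma\eta\,\bH w_+\le0$ and integrate the cross term by parts. The remaining kernel $\Gamma(\Delta\eta+\partial_s\eta)+2\nabla_y\Gamma\cdot\nabla\eta$ is supported away from the vertex and bounded by $C\rho^{-n-2}$.
\item Keep the heat balls and use H\"older. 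The heat-ball kernel lies in $L^q$ for $q<1+\frac n2$, which gives an $L^{q'}$ bound, and Step 2 handles that the same way.
\end{enumerate}

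A minor point on finiteness of $M(r)$: shrinking the radius to $1-\delta$ does not help, since $\gC_{1-\delta}$ still reaches $t=0$, which is outside $\gC_1$. Shift the top of the cylinders slightly below $t=0$ instead, then pass to the limit.
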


The parabolic version of the Calder\'on-Zygmund estimate \cite{Jon64} reads as follows.

\smallskip
\begin{lemma}
    \label{lem:parabolic_CZ_estimate}
    Let $w: \gC_1 \to \R$ be any solution of $\bH w= g$ in $\gC_1$, with $g \in L^1$. Then
    $$
    \sup_{\theta > 0} \left| \{|D^2w| + |\partial_t w| > \theta \} \cap \gC_{1/2}\right| \leq C \left( \|g\|_{L^1(\gC_1)} + \|w\|_{L^1(\gC_1)}\right),
    $$
    for some constant $C$ depending only on $n$.
\end{lemma}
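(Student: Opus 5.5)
This is the parabolic Calder\'on--Zygmund weak-type $L^1$ estimate (cf. \cite{Jon64}). I would prove it by splitting $w$ into a Newtonian-type potential part and a caloric part.

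\textbf{Step 1: localization.} Fix a cutoff $\eta\in C^\infty_c(\gC_1\cup\{t=0\})$ with $\eta\equiv 1$ on $\gC_{3/4}$. Let $\Gamma$ denote the forward heat kernel and set
\[
v:=-\Gamma*(g\chi_{\gC_1}),
\]
so that $\bH v=g$ in $\gC_1$. Then $h:=w-v$ satisfies $\bH h=0$ in $\gC_1$.

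\textbf{Step 2: the potential part.} The operator $g\mapsto (D^2\Gamma * g,\ \partial_t\Gamma*g)$ is a singular integral of convolution type on $\R^{n+1}$. Its kernel is homogeneous of degree $-(n+2)$ with respect to the parabolic dilations and has mean zero on parabolic spheres. It is also bounded on $L^2$, since its Fourier multipliers are $\xi_i\xi_j/(|\xi|^2+i\tau)$ and $i\tau/(|\xi|^2+i\tau)$.

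The parabolic distance $\text{dist}_\text{par}$ together with Lebesgue measure forms a space of homogeneous type. The kernel satisfies the H\"ormander condition
\[
\int_{\text{dist}_\text{par}(z,0)>2\,\text{dist}_\text{par}(y,0)}|K(z-y)-K(z)|\,dz\le C .
\]
Calder\'on--Zygmund theory on this space (the Calder\'on--Zygmund decomposition using parabolic cylinders) then gives the weak-$(1,1)$ bound
\[
\sup_\theta \theta\,|\{|D^2v|+|\partial_tv|>\theta\}|\le C\|g\|_{L^1(\gC_1)} .
\]
This step is the main technical point, but it is classical. The only care needed is checking the cancellation and smoothness of $K$, and treating the delta contribution in $\partial_t\Gamma$, which is absorbed as the identity.

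\textbf{Step 3: the caloric part.} $h$ is caloric, so interior estimates give
\[
\sup_{\gC_{1/2}}|D^2h|+|\partial_t h|\le C\|h\|_{L^1(\gC_{3/4})}\le C\big(\|w\|_{L^1(\gC_1)}+\|v\|_{L^1(\gC_1)}\big).
\]
Since $\Gamma\in L^1_{loc}$, Young's inequality gives $\|v\|_{L^1(\gC_1)}\le C\|g\|_{L^1}$.

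Consequently $\{|D^2h|+|\partial_t h|>\theta/2\}\cap\gC_{1/2}$ is empty unless $\theta\le C(\|g\|_{L^1}+\|w\|_{L^1})$. In that case its measure is at most $|\gC_{1/2}|\le C(\|g\|_{L^1}+\|w\|_{L^1})/\theta$.

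\textbf{Step 4: conclusion.} Combine Steps 2 and 3 through
\[
\{|D^2w|+|\partial_tw|>\theta\}\subset\{|D^2v|+|\partial_tv|>\theta/2\}\cup\{|D^2h|+|\partial_th|>\theta/2\}.
\]
This yields the estimate with $\theta$ multiplying the left-hand side, which is the intended weak-type reading of the statement.
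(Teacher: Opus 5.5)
Your proposal is correct and matches the paper, which gives no argument of its own and simply cites Jones' parabolic Calder\'on--Zygmund theory. Your splitting $w=v+h$ into a heat potential, which is weak-$(1,1)$ by that theory, plus a caloric remainder, controlled by interior estimates and $\|v\|_{L^1(\gC_1)}\le\|g\|_{L^1(\gC_1)}$, is the standard way to obtain the localized form with the $\|w\|_{L^1(\gC_1)}$ term. You are also right that the left-hand side must carry the factor $\theta$: as written, the inequality fails under $w\mapsto\varepsilon w$, and the $\theta$-weighted form is exactly how the lemma is used in the proof of Lemma~\ref{lem:L_infty_bounds}.
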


We also need the following estimate for subharmonic functions, which we prove here.
\smallskip
\begin{proposition}
    \label{prop:cc_argument}
    Let $u: B_1 \to \R$ a subharmonic function such that $\| \partial_{x_i} u \|_{L^\infty(B_1)} \leq C$ for all $i \in \{1, 2, \dots, n-1\}$. Then for all $\alpha \in (0, 1)$, there exists a constant $C = C( \alpha, n)$ such that
    $$
    [u]_{C^{0, \alpha}(B_{1/2})} \leq C \left(\| \nabla 'u\|_{L^\infty(B_1)} +\| u\|_{L^\infty(B_1)} \right), 
    $$

    where $\nabla'$ denotes the gradient w.r.t. the first $n-1$ variables.
\end{proposition}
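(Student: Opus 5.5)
Normalize so that $M := \|\nabla' u\|_{L^\infty(B_1)} + \|u\|_{L^\infty(B_1)} = 1$. The aim is to show $|u(x+he_n)-u(x)| \le C(\alpha)\, h^\alpha$ for $x\in B_{1/2}$ and small $h>0$. Combined with the Lipschitz bound in the $x'$ directions, this gives the H\"older seminorm. The idea is to compare $u$ with its tangential averages. Subharmonicity alone gives no modulus of continuity: $u$ could jump upward at isolated points. The tangential Lipschitz bound is what prevents this.

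\textbf{Step 1: an upper bound in the $e_n$-direction.} Fix $x_0\in B_{1/2}$ and $\rho\le 1/4$. By subharmonicity,
\[
u(x_0)\le \fint_{B_\rho(x_0)} u .
\]
Write $y=(y',y_n)$. For fixed $y_n$, the Lipschitz bound in $y'$ gives
\[
u(y',y_n)\le u(x_0',y_n)+|y'-x_0'|.
\]
Hence
\[
u(x_0)\le \rho + \int w_\rho(s)\,u(x_0',x_{0,n}+s)\,ds,
\]
where $w_\rho$ is the normalized one-dimensional marginal of the ball, supported on $[-\rho,\rho]$. So the one-variable function $g(s):=u(x_0',s)$ satisfies the one-dimensional sub-mean-value-type inequality
\[
g(s)\le \rho+\int w_\rho(\tau)\,g(s+\tau)\,d\tau
\]
at every scale $\rho$ and every point. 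The same inequality holds along every vertical line through $B_{1/2}$.

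\textbf{Step 2: one-dimensional analysis.} I expect this step to be the main obstacle. A one-variable function that is bounded and satisfies the inequality of Step 1 at all scales is H\"older of every exponent less than $1$. The route I would try first mirrors the proof that a bounded function with $g''\ge -C$ in a weak (mean-value) sense is Lipschitz, but the error here is $\rho$ instead of $\rho^2$, which produces only a logarithmic loss. Concretely, I would rewrite the inequality as follows: $\tilde g:=g+\Phi$ is ``convex in mean'' at scale $\rho$, up to error $\rho$, for a suitable correction. Iterating over dyadic scales $\rho_k=2^{-k}$ yields
\[
g(s+h)-g(s)\le C h\log(1/h)\le C_\alpha h^\alpha .
\]
The reverse inequality comes from applying the same argument with the roles of $s$ and $s+h$ exchanged.

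\textbf{Alternative route for Step 2.} If the one-dimensional route proves delicate, I would instead use the distributional structure. Since $\Delta u=\mu\ge 0$, we have $\partial_{nn}u=\mu-\Delta' u$. The tangential Lipschitz bound controls $\Delta' u$ in $W^{-1,\infty}$ in the $x'$ variables. Testing against cutoffs then gives $\mu(B_{3/4})\le C$, since it is bounded by $\|u\|_\infty$. The next step is to localize $\mu$. Testing against $\varphi(x')\psi(x_n)$ at scale $r$, the term $\int u\,\Delta'(\varphi\psi)$ is bounded using the $x'$-Lipschitz bound after one integration by parts. The term $\int u\,\partial_{nn}(\varphi\psi)$ is bounded by $\|u\|_\infty$. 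Together these should give the Morrey-type bound $\mu(B_r(x))\le C r^{n-1}\log(1/r)$, or even $C r^{n-1}$.

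\textbf{Step 3: conclusion.} Given the Morrey bound on $\mu$, decompose $u$ as the Newtonian potential of $\mu\chi_{B_{3/4}}$ plus a harmonic function. The harmonic part is smooth in $B_{1/2}$ with bounds in terms of $\|u\|_\infty$ and $\mu(B_{3/4})$. For the potential part, the standard estimate states that a measure with $\mu(B_r)\le Cr^{n-2+\beta}$ has a $C^{0,\beta}$ potential for $\beta<1$. Applying it with $\beta=\alpha$ gives the claim, with constants depending only on $\alpha$ and $n$.
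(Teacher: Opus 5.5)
Your Step 1 reduction is correct. However, Step 2 carries the entire content of the proposition, you have not established it, and the mechanism you sketch does not work as described.

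The dyadic iteration that produces $h\log(1/h)$ is the Zygmund-type doubling identity $Q(2\tau)=Q(\tau)+\frac{g(s+2\tau)+g(s)-2g(s+\tau)}{2\tau}$ for $Q(\tau)=\frac{g(s+\tau)-g(s)}{\tau}$. It needs the pointwise two-point bound $g(\sigma+\tau)+g(\sigma-\tau)-2g(\sigma)\ge -C\tau$. Step 1 only gives the averaged bound $\int w_\rho(\tau)\,\bigl(g(s+\tau)-g(s)\bigr)\,d\tau\ge-\rho$, which does not compare $Q(2\tau)$ with $Q(\tau)$. Nor can a fixed correction $\Phi$ satisfy $w_\rho*\Phi-\Phi\ge\rho$ at every point and every scale: as $\rho\downarrow 0$ this would force $\Phi''$ to put mass $\gtrsim 1$ at every point. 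Your one-dimensional claim is also false as stated: $g=-\chi_{\{0\}}$ is bounded and satisfies the inequality at all scales, so the upper semicontinuity of $u$ must be used.

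What closes the gap is a maximum principle that exploits the freedom to choose the scale point by point. On $[a,b]$, let $\ell$ be affine with $\ell=g$ at $a$ and $b$, let $d(s)=\min\{s-a,b-s\}$, and let $\psi(s)=\phi(s-a)+\phi(b-s)$ with $\phi(t)=t\log(L/t)$ and $L\ge e(b-a)$. Then $\psi''\le -1/d$, hence $w_\rho*\psi-\psi\le -c\rho$ at $\rho=d(s)/2$. At an interior maximum $s^*$ of the u.s.c.\ function $g-\ell-K\psi$, Step 1 with $\rho=d(s^*)/2$ gives, at $s^*$, $-\rho\le w_\rho*g-g\le K(w_\rho*\psi-\psi)\le -cK\rho$, which is impossible for $K>1/c$. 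Thus $g\le \ell+K(\psi-\psi(a))$ on $[a,b]$. Since $\psi(a+h)-\psi(a)\le\phi(h)$, this gives $g(a+h)-g(a)\le Ch\log(1/h)$. The mirror-image comparison on an interval with right endpoint $a+h$ gives the reverse inequality.

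Your alternative route fails at exactly the crucial estimate. For a cut-off $\varphi\psi$ at scale $r$, $\int u\,\partial_{nn}(\varphi\psi)$ is only $O(\|u\|_{L^\infty}r^{n-2})$. Testing therefore yields only $\mu(B_r)\lesssim r^{n-2}$, which every bounded subharmonic function satisfies. Meanwhile $\mu(B_r)\lesssim r^{n-2+\alpha}$ is essentially equivalent to the $C^{0,\alpha}$ bound you are after. (The route can be rescued with anisotropic cut-offs of height $R=r^{1/(2-\beta)}\ge r$ in $x_n$, subtracting $u(x_0)$ via $\int\psi''=0$, and bootstrapping $\beta\mapsto 1/(2-\beta)$ from $\beta=0$.)

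For comparison, the paper avoids any quantitative one-dimensional analysis. In \autoref{lem:cc_argument} it blows up along a sequence violating $[u]_{C^{0,\alpha}(B_{1/2})}\le\delta[u]_{C^{0,\alpha}(B_1)}+C_\delta(\|\nabla'u\|_{L^\infty}+\|u\|_{L^\infty})$. The limit is subharmonic and independent of $x'$, hence convex in $x_n$ with sublinear growth, hence constant, which is a contradiction. It then absorbs the $\delta$-term with \autoref{lem:elliptic_PDE_2.27}. A completed version of your one-dimensional argument would be more elementary and quantitative (log-Lipschitz in $x_n$). It would also not need the a priori $C^{0,\alpha}$ assumption that \autoref{lem:cc_argument} carries.
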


To prove it, we need two lemmas: the first one is an intermediate step introducing a dependence of $\delta [u]_{C^{0, \alpha}(B_1)}$ that will be proved using a contradiction-compactness argument; the second one is a technical lemma to obtain the final bound. 

\smallskip
\begin{lemma}
    \label{lem:cc_argument} Fix $\alpha \in (0, 1)$ and $\delta > 0$. Let $u \in C^{0, \alpha}(B_1)$ be any subharmonic function such that $\| \nabla' u \|_{L^\infty(B_1)} \leq C$ for all $i \in \{1, 2, \dots, n-1\}$. Then there exists a constant $C_\delta = C(\delta, \alpha, n)$ such that
    $$
    [u]_{C^{0, \alpha}(B_{1/2})} \leq \delta [u]_{C^{0, \alpha}(B_1)} + C_\delta \left(\| \nabla 'u\|_{L^\infty(B_1)} +\| u\|_{L^\infty(B_1)} \right).
    $$
\end{lemma}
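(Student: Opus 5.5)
We argue by contradiction and compactness, in the style of Simon's blow-up arguments for Schauder-type estimates. Suppose the inequality fails for some fixed $\alpha,\delta$. Then there are subharmonic $u_k\in C^{0,\alpha}(B_1)$ with
$$
[u_k]_{C^{0,\alpha}(B_{1/2})} > \delta [u_k]_{C^{0,\alpha}(B_1)} + k\left(\|\nabla' u_k\|_{L^\infty(B_1)}+\|u_k\|_{L^\infty(B_1)}\right).
$$
We normalize $v_k := u_k/[u_k]_{C^{0,\alpha}(B_{1/2})}$. Subharmonicity is preserved by positive scaling, so each $v_k$ is subharmonic with $[v_k]_{C^{0,\alpha}(B_{1/2})}=1$ and $[v_k]_{C^{0,\alpha}(B_1)}\le 1/\delta$. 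Moreover $\|\nabla' v_k\|_{L^\infty(B_1)}+\|v_k\|_{L^\infty(B_1)}<1/k\to 0$.

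By Arzel\`a--Ascoli, using the uniform $C^{0,\alpha}(B_1)$ bound together with the $L^\infty$ bound, a subsequence converges uniformly on compact subsets, and in fact uniformly on $\overline{B_1}$ after extension. Since $\|v_k\|_{L^\infty}\to0$, the limit is $0$. This alone does not contradict $[v_k]_{C^{0,\alpha}(B_{1/2})}=1$, because uniform convergence does not control Hölder seminorms. The key step is therefore to show that the seminorm on $B_{1/2}$ is attained at \emph{comparable} scales.

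To do this, pick points $x_k,y_k\in B_{1/2}$ with
$$
\frac{|v_k(x_k)-v_k(y_k)|}{|x_k-y_k|^\alpha}\ge \tfrac12,
$$
and set $\rho_k:=|x_k-y_k|$. If $\rho_k\ge c>0$ along a subsequence, then uniform convergence to $0$ gives a contradiction immediately. So we may assume $\rho_k\to0$, and we blow up:
$$
w_k(z):=\rho_k^{-\alpha}\bigl(v_k(x_k+\rho_k z)-v_k(x_k)\bigr).
$$
These functions are defined on $B_{1/(2\rho_k)}$, are subharmonic, and satisfy $w_k(0)=0$ and $[w_k]_{C^{0,\alpha}}\le 1/\delta$. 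Hence $|w_k(z)|\le |z|^\alpha/\delta$, so the family is locally uniformly bounded. In the tangential directions, $\|\nabla' w_k\|_{L^\infty}\le \rho_k^{1-\alpha}/k\to0$. Finally, $|w_k(\xi_k)|\ge 1/2$, where $\xi_k:=(y_k-x_k)/\rho_k\in\mathbb S^{n-1}$.

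Passing to the limit locally uniformly on $\R^n$ gives $w$, which is subharmonic (subharmonicity is preserved under uniform limits, via the sub-mean value property). It satisfies $|w(z)|\le C|z|^\alpha$, has $\nabla'w=0$, so that $w=w(x_n)$ depends only on $x_n$, and $|w(\xi)|\ge1/2$ for some unit vector $\xi$. A subharmonic function of one variable is convex. A convex function on $\R$ with growth $|z|^\alpha$, $\alpha<1$, must be constant: a nonconstant convex function grows at least linearly in one direction. Since $w(0)=0$, we get $w\equiv0$, contradicting $|w(\xi)|\ge 1/2$. This is the contradiction.

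The main point to watch is the passage to the limit of subharmonicity and of the tangential Lipschitz bound. The latter passes to the limit as a Lipschitz constant in $x'$ because of uniform convergence, so it needs no derivative convergence. One should also note that the constant $C$ in the hypothesis plays no role, since only $\|\nabla' u\|_{L^\infty}$ enters the estimate.
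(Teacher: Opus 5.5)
Your proposal is correct and follows essentially the same route as the paper. Both arguments run by contradiction, pick near-extremal pairs $x_k,y_k\in B_{1/2}$ with $\rho_k\to0$ forced by the smallness of $\|u_k\|_{L^\infty}$, blow up via $\rho_k^{-\alpha}\bigl(u_k(x_k+\rho_k\cdot)-u_k(x_k)\bigr)$, and conclude because a subharmonic limit independent of $x'$ is a convex function of $x_n$ with sublinear growth, hence constant. The differences are cosmetic: you normalize by $[u_k]_{C^{0,\alpha}(B_{1/2})}$ instead of $[u_k]_{C^{0,\alpha}(B_1)}$, and your preliminary Arzel\`a--Ascoli step is unnecessary, since $\|v_k\|_{L^\infty}<1/k$ already gives $\rho_k^\alpha\le 4/k$ directly.
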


\begin{proof}
    Assume the claim is false. Then there exists a $\delta_0 > 0$ and a sequence of subharmonic functions $\{u_k \}_{k \in \N}$ such that
    \begin{equation*}
        \label{eq:porp_cc_negated_claim}
        [u_k]_{C^{0, \alpha}(B_{1/2})} > \delta_0 [u_k]_{C^{0, \alpha}(B_1)} + k \left(\| \nabla 'u_k\|_{L^\infty(B_1)} +\| u_k\|_{L^\infty(B_1)} \right).
    \end{equation*}
For each $k$ take $x_k, y_k \in B_{1/2}$ such that
$$
    \frac{|u_k(x_k) -u_k(y_k)|}{|x_k - y_k|^\alpha} \geq \frac{1}{2}  [u_k]_{C^{0, \alpha}(B_{1/2})}.
$$
Note $\rho_k := |x_k - y_k| \to 0$ as $k \to \infty$ since
$$
     \frac{1}{2}[u_k]_{C^{0, \alpha}(B_{1/2})}  \leq \frac{|u_k(x_k) -u_k(y_k)|}{|\rho_k|^\alpha} \leq  \frac{2\| u_k\|_{L^\infty(B_1)}}{|\rho_k|^\alpha} < \frac{2[u_k]_{C^{0, \alpha}(B_{1/2})}}{k\rho_k^\alpha}
$$
which implies $\rho_k^\alpha < 4/k$. We define the blow-up sequence by
$$
\tilde{u}_k(x) = \frac{u_k(x_k +\rho_k x) - u_k(x_k)}{\rho_k^\alpha [u_k]_{C^{0, \alpha}(B_1)}}.
$$
For any $R < \frac{1}{2\rho_k}$ note $[\tilde{u}_k]_{C^{0, \alpha}(B_R)} \leq 1$ then by Arzel\`a-Ascoli theorem $\tilde{u}_k \to \tilde{u}$ up to subsequences in $C^{0, \alpha}_\text{loc}(\R^n)$. Also note that 
$$
    \Delta \tilde{u}_k(x) = \rho_k^{2 - \alpha} \frac{\Delta u_k(x_k + \rho_k x)}{[u_k]_{C^{0, \alpha}(B_1)}} \geq 0
$$
and
$$
\partial_{x_i} \tilde{u}_k(x) = \rho_k^{1-\alpha} \frac{\partial_{x_i} u_k(x_k + \rho_k x)}{[u_k]_{C^{0, \alpha}(B_1)}} \leq \rho_k^{1-\alpha} \frac{\partial_{x_i} u_k(x_k + \rho_k x)}{k\| \nabla' u_k\|_{L^\infty(B_1)}}.
$$
Therefore the limit function $\tilde{u}$ is also subharmonic and has $\partial_{x_i} \tilde{u} = 0$ for all $i \in \{1, 2, \dots, n-1 \}$, which implies $\tilde{u}(x) = \phi(x_n)$ for some $\phi : R \to \R$ convex. A convex function on $\mathbb{R}$ that has strictly sub-linear growth must be a constant. However if we take
$$
    \xi_k = \frac{y_k - x_k}{\rho_k} \in \mathbb{S}^{n-1},
$$
(note that, up to subsequences, $\xi_k \to \xi \in \mathbb{S}^{n-1}$) and evaluate
$$
|\tilde{u}_k(\xi_k)| = \frac{|u_k(y_k) - u_k(x_k)|}{\rho^\alpha [u_k]_{C^{0, \alpha}(B_1)}} \geq \frac{1}{2} \frac{[u_k]_{C^{0, \alpha}(B_{1/2})}}{[u_k]_{C^{0, \alpha}(B_1)}} > \frac{\delta_0}{2}
$$
we conclude $|\tilde{u}(\xi)| > \delta_0 /2$ which rules out the case of $\tilde{u}$ being constant since we also have $\tilde{u}(0) = 0$.
\end{proof}

\smallskip
\begin{lemma}[{\cite[Lemma 2.27]{FR22}}]
    \label{lem:elliptic_PDE_2.27}
    Let $\eta \in \R$ and $\beta > 0$. Let $S$ be a non-negative function on the class of open convex subsets of $B_1$ and suppose that $S$ is sub-additive. That is, if $A, A_1, \dots, A_N$ are open convex subsets of $B_1$ with $A \subset \bigcup_{j=1}^NA_j$, then $S(A) \leq \sum_{j = 1}^N S(A_j)$. Then, there exists $\delta > 0$ small (depending only on $n$ and $\eta$) such that, if
    $$
    \rho^\eta S(B_{\rho/2}(x_0)) \leq \delta \rho^\eta S(B_\rho(x_0)) + \beta \quad \text{for all } B_\rho(x_0) \subset B_1,
    $$
    then 
    $$
    S(B_{1/2}) \leq C\beta
    $$
    for some $C$ depending only on $n$ and $k$.
\end{lemma}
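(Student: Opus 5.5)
The plan is the standard absorption argument of L.~Simon. Set
$$
Q := \sup\bigl\{ \rho^\eta\, S(B_{\rho/2}(x_0)) \;:\; B_\rho(x_0) \subset B_1 \bigr\}.
$$
Taking $\rho = 1$ and $x_0 = 0$ gives $S(B_{1/2}) \le Q$. So it suffices to prove $Q \le C\beta$. I will do this by showing $Q \le \tfrac12 Q + C\beta$ and absorbing the first term.

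First observe that $S$ is monotone: subadditivity with $N=1$ gives $S(A) \le S(A_1)$ whenever $A \subset A_1$. Now fix a ball $B_\rho(x_0) \subset B_1$. Cover $B_{\rho/2}(x_0)$ by $N = N(n)$ balls $B_{\rho/8}(y_j)$ with centres $y_j \in B_{\rho/2}(x_0)$; this is a fixed number of balls after rescaling. Two inclusions follow:
\begin{itemize}
\item $B_{\rho/4}(y_j) \subset B_{3\rho/4}(x_0) \subset B_1$;
\item $B_{\rho/2}(y_j) \subset B_\rho(x_0) \subset B_1$.
\end{itemize}
Apply the hypothesis at the ball $B_{\rho/4}(y_j)$, i.e.\ at radius $\rho/4$:
$$
S(B_{\rho/8}(y_j)) \le \delta\, S(B_{\rho/4}(y_j)) + (\rho/4)^{-\eta}\beta .
$$
Since $B_{\rho/4}(y_j) = B_{(\rho/2)/2}(y_j)$ and $B_{\rho/2}(y_j) \subset B_1$, the definition of $Q$ gives $(\rho/2)^\eta S(B_{\rho/4}(y_j)) \le Q$.

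Summing over $j$ and using subadditivity, then multiplying by $\rho^\eta$, yields
$$
\rho^\eta S(B_{\rho/2}(x_0)) \le N\bigl( 2^{|\eta|}\delta\, Q + 4^{|\eta|}\beta \bigr).
$$
Taking the supremum over admissible balls gives $Q \le N 2^{|\eta|}\delta\, Q + N 4^{|\eta|}\beta$. Choose $\delta = \delta(n,\eta)$ so that $N 2^{|\eta|}\delta \le \tfrac12$. If $Q < \infty$, this gives $Q \le 2N 4^{|\eta|}\beta$, and hence $S(B_{1/2}) \le C\beta$ with $C = C(n,\eta)$.

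The main obstacle is justifying $Q < \infty$, which the absorption step needs.
\begin{itemize}
\item When $\eta \ge 0$, monotonicity gives $Q \le S(B_1)$. This is finite in every application we have in mind: for instance $S(A) = [u]_{C^{0,\alpha}(A)}$ with $u \in C^{0,\alpha}(B_1)$, where the relevant exponent is $\eta = \alpha > 0$. This is exactly the situation for deducing \autoref{prop:cc_argument} from \autoref{lem:cc_argument}.
\item When $\eta < 0$, the weight $\rho^\eta$ blows up as $\rho \to 0$, and one must assume qualitatively that $Q < \infty$, as in \cite{FR22}. Alternatively one can restrict to sets where $S$ is a priori controlled at small scales.
\end{itemize}
I would state this finiteness as the standing qualitative assumption, note that it holds automatically for $\eta \ge 0$ whenever $S(B_1) < \infty$, and then run the absorption above.
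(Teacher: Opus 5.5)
Your absorption argument is L.~Simon's standard proof, which the paper takes from \cite[Lemma 2.27]{FR22} without reproducing it. The ingredients all check: the supremum $Q$ over admissible balls, the cover of $B_{\rho/2}(x_0)$ by $N(n)$ balls of radius $\rho/8$, the hypothesis applied at radius $\rho/4$, and the bound $(\rho/2)^\eta S(B_{\rho/4}(y_j))\le Q$. The inclusions and constants are correct. For $\eta\ge 0$ you have $Q\le S(B_1)$, and $S(B_1)<\infty$ simply because $S$ is a real-valued function, so you do not need to appeal to the applications.

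The one point to correct is the case $\eta<0$. There you add the hypothesis $Q<\infty$ and say it must be assumed. As written, that proves a weaker statement than the lemma, which is claimed for all $\eta\in\R$ with no such assumption. No extra assumption is needed. Every admissible ball $B_\rho(x_0)\subset B_1$ has $\rho\le 1$, so multiplying the hypothesis by $\rho^{-\eta}=\rho^{|\eta|}\le 1$ gives
$$
S(B_{\rho/2}(x_0)) \le \delta\, S(B_\rho(x_0)) + \beta\rho^{|\eta|} \le \delta\, S(B_\rho(x_0)) + \beta .
$$
This is the hypothesis with $\eta=0$ and the same $\beta$. Set $\delta(n,\eta):=\delta(n,0)$ for $\eta<0$. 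Your $\eta=0$ case, where $Q\le S(B_1)<\infty$ holds automatically, then yields $S(B_{1/2})\le C(n)\beta$. With this reduction the proof covers every $\eta\in\R$. Nothing changes for the paper, which applies the lemma only with $\eta=\alpha>0$.
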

 We can now give the proof of the proposition:
\begin{proof}[Proof of \autoref{prop:cc_argument}]
    It follows from \autoref{lem:cc_argument} and \autoref{lem:elliptic_PDE_2.27} taking $S(E) = \| u \|_{C^{0, \alpha}(E)}$, $\eta = \alpha$ and $\beta = C \left(\| \nabla 'u\|_{L^\infty(B_1)} +\| u\|_{L^\infty(B_1)} \right)$.
\end{proof}

\smallskip
\section{Weiss and frequency formulae}
\label{sec:weiss_mon_form}
We would like to apply the results of this section to $u - p_2$. However, since we are considering only solutions defined in $B_1 \times (-1, 1)$ and our functionals $D$ and $H$ are defined in the whole space we have to use the cut-off function introduced in Section \ref{sec:notation_and_formulas}. This fact generates an error term which can be controlled thanks to the following lemma.

\smallskip
\begin{lemma}[{\cite[Lemma 5.2]{FRS24}}]
\label{lem:exp_error}
Let $v \in C^{1, 1}_x \cap C^{0, 1}_t(\gC_1)$ and $\zeta$ a fixed cut-off function. Then for all $r\in (0, 1/2)$ we have 
$$
|\langle \zeta v, \bH(\zeta v) \rangle_r - \langle \zeta v, \zeta \bH v \rangle_r| + |\langle Z(\zeta v), \bH(\zeta v) \rangle_r - \langle \zeta Z v, \zeta \bH v \rangle_r| \leq CM_v^2e^{-\frac{1}{(8r)^2}}
$$
where
$$
M_v := \sup_{B_{1/2} \times (-1/2, 0)} |v| + |\nabla v| + |\partial_t v| + |\Delta v|,
$$
and $C$ depends only on $\zeta$.
\end{lemma}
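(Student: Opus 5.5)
The estimate is proved by expanding both bilinear forms with the product rule. Every term that survives carries a derivative of the cut-off $\zeta$, so it is supported in the annulus $A := B_{1/2}\setminus B_{1/4}$, where the reversed heat kernel at time $-r^2$ is exponentially small.

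\emph{First term.} Writing $\bH(\zeta v) = \zeta\bH v + 2\nabla\zeta\cdot\nabla v + v\Delta\zeta$ (note $\zeta$ is independent of $t$), we get
$$
\langle \zeta v, \bH(\zeta v)\rangle_r - \langle \zeta v, \zeta\bH v\rangle_r = \int_{\R^n} \zeta v\,\big(2\nabla\zeta\cdot\nabla v + v\Delta\zeta\big)(x,-r^2)\, G(x,-r^2)\,dx .
$$
The integrand is supported in $A$. It is bounded there by $C_\zeta M_v^2$, since $|v|,|\nabla v|\le M_v$ on $B_{1/2}\times(-1/2,0)$ and $-r^2\in(-1/4,0)$ for $r<1/2$.

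\emph{Second term.} We have $Z(\zeta v) = \zeta Zv + v\,(x\cdot\nabla\zeta)$. Hence
$$
\langle Z(\zeta v), \bH(\zeta v)\rangle_r - \langle \zeta Zv, \zeta\bH v\rangle_r = \langle \zeta Zv,\ 2\nabla\zeta\cdot\nabla v + v\Delta\zeta\rangle_r + \langle v\,(x\cdot\nabla\zeta),\ \bH(\zeta v)\rangle_r .
$$
Both integrands are supported in $A$, since each contains a derivative of $\zeta$. They involve $Zv = x\cdot\nabla v + 2t\partial_t v$ and $\bH v = \Delta v - \partial_t v$, and on $B_{1/2}\times(-1/4,0)$ these are controlled by $C M_v$. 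This is exactly why $M_v$ includes $|\partial_t v|$ and $|\Delta v|$. So these integrands are also bounded by $C_\zeta M_v^2$ on $A$.

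\emph{Kernel estimate.} It remains to bound
$$
\int_A G(x,-r^2)\,dx = \int_{A} (4\pi r^2)^{-n/2} e^{-|x|^2/(4r^2)}\,dx \le C r^{-n} e^{-1/(64 r^2)},
$$
using $|x|\ge 1/4$ on $A$. We must absorb the prefactor $r^{-n}$ into a slightly weaker exponential. We have $\frac{1}{(8r)^2} = \frac{1}{64r^2}$, which is exactly the exponent just obtained. So either we keep a little room, or we use $|x|>1/4$ more carefully. I would split $e^{-|x|^2/(4r^2)} \le e^{-1/(128r^2)}\cdot e^{-|x|^2/(8r^2)}$. The factor $r^{-n}e^{-|x|^2/(8r^2)}$ integrates to a dimensional constant over $\R^n$. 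This gives $Ce^{-1/(128r^2)}$, which matches the stated $e^{-1/(8r)^2}$ only up to the normalization of constants.

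The main (minor) obstacle is this bookkeeping of the exponent. If the precise constant $1/(8r)^2$ is required, I would instead use the support of $\nabla\zeta$ more generously (any exponent strictly below $1/(64r^2)$ works, with $C$ depending on $n$). Alternatively, I would note that the convention in \cite{FRS24} for $G$ yields exactly this exponent. With that, summing the two bounds gives $CM_v^2e^{-1/(8r)^2}$.
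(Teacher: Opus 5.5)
Your product-rule expansions of $\bH(\zeta v)$ and $Z(\zeta v)$ are correct, and so is the observation that every surviving term carries a factor $\nabla\zeta$, $\Delta\zeta$ or $x\cdot\nabla\zeta$, hence lives in $A=\{\tfrac14\le|x|\le\tfrac12\}$. The pointwise bound $C_\zeta M_v^2$ there is also right. This is the natural argument; the paper gives no proof of its own and simply imports the lemma from \cite{FRS24}.

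The gap is the last step. The bound $Ce^{-1/(128r^2)}$ you obtain is \emph{not} the stated one up to constants: $e^{-1/(128r^2)}/e^{-1/(64r^2)}=e^{1/(128r^2)}\to\infty$ as $r\downarrow0$, so a change in the coefficient of the exponent cannot be absorbed into $C$. Your remedies do not close it either. Since $\zeta\equiv1$ only on $B_{1/4}$, the support of $\nabla\zeta$ may reach $\partial B_{1/4}$, so there is nothing to gain from the support. Nor can a bounded integrand on $A$ suffice: $\int_A G(x,-r^2)\,dx\sim c\,r^{2-n}e^{-1/(64r^2)}$, which is not $O(e^{-1/(64r^2)})$ when $n\ge3$. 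As you note, any exponent strictly below $1/(64r^2)$ is reachable this way, but not the stated one. Appealing to a convention in \cite{FRS24} is not an argument, since $G$ is fixed explicitly here.

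The missing idea is that the derivatives of $\zeta$ vanish to infinite order on $\partial B_{1/4}$, because $\zeta-1$ vanishes on the open ball $B_{1/4}$. Radial Taylor expansion gives $|\nabla\zeta(x)|+|\Delta\zeta(x)|\le C_k(|x|-\tfrac14)^k$ on $A$ for every $k$, with $C_k$ depending on $\zeta$. Since $|x|^2-\tfrac1{16}=(|x|-\tfrac14)(|x|+\tfrac14)\ge\tfrac12(|x|-\tfrac14)$ on $A$, you get
\[
\int_A \bigl(|x|-\tfrac14\bigr)^k\,(4\pi r^2)^{-n/2}e^{-|x|^2/(4r^2)}\,dx\le C\,r^{-n}e^{-1/(64r^2)}\int_0^\infty s^k e^{-s/(8r^2)}\,ds = C\,k!\,8^{k+1}\,r^{2k+2-n}e^{-1/(64r^2)} .
\]
Choosing $2k+2\ge n$ gives exactly $CM_v^2e^{-1/(8r)^2}$, with $C$ depending only on $\zeta$ (and $n$). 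For the paper's purposes your weaker bound would already suffice, since these errors are only ever absorbed into polynomial terms. As a proof of the lemma as stated, however, the final step needs this refinement.
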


Two key ingredients in the results from \cite{FRS24} were $\langle u - p, \bH (u-p) \rangle_r \geq 0$ and $\langle Z(u - p), \bH (u-p) \rangle_r \geq 0$ where $u$ is a solution of \eqref{eq:stefan_problem} with $f \equiv 1$ problem and $p \in \cP$. However if $f \not\equiv 1$ the previous estimates are no longer true, it may happen $\langle u - p, \bH (u-p) \rangle_r < 0$ or $\langle Z(u - p), \bH (u-p) \rangle_r < 0$. For that reason we need to refine the estimates for small values of $r$, that is done in the following lemma:

\smallskip
\begin{lemma}
    \label{lem:cubic_estimates}
    Let $u: B_1 \times (-1, 1) \rightarrow [0, \infty)$ be a bounded solution of \eqref{eq:stefan_problem} and $(0, 0)$ a singular point. Given $p \in \cP$ set $w:= (u - p)\zeta$. Then, there exists $r_0 > 0$ such that for all $r \in (0, r_0)$
    \begin{equation}
        \label{eq:cubic_estimates}
        | \langle w, \bH w \rangle_r| \leq Cr^3 \qquad \text{and} \qquad | \langle Zw, \bH w \rangle_r| \leq Cr^3
    \end{equation}
    where $C$ is a constant depending only on $\zeta$ and $\|p\|_{L^\infty(B_1)}$.
\end{lemma}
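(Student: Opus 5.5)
The plan is to compute $\bH w$ explicitly near the origin, split each pairing into a term on the positivity set and a term on the contact set, and bound each piece by $Cr^3$. The Gaussian localizes everything to $|x|\lesssim r$, which converts polynomial bounds of degree three into $r^3$. The cut-off contributes only exponentially small errors.

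\emph{Step 1 (removing the cut-off).} By \autoref{lem:exp_error} with $v=u-p$,
\[
|\langle w,\bH w\rangle_r-\langle \zeta(u-p),\zeta\bH(u-p)\rangle_r|+|\langle Zw,\bH w\rangle_r-\langle \zeta Z(u-p),\zeta\bH(u-p)\rangle_r|\le CM_v^2e^{-1/(8r)^2}.
\]
Here $M_v$ is controlled by \autoref{lem:constant_simplification_bound} and $\|p\|_{L^\infty(B_1)}$, so this error is $\le Cr^3$. In $B_{1/4}$ we use $\Delta p=f(0)$ and $\partial_tp=0$ to write
\[
\bH(u-p)=f\chi_{\{u>0\}}-f(0)=(f-f(0))\chi_{\{u>0\}}-f(0)\chi_{\{u=0\}}.
\]
Outside $B_{1/4}$ we have $G(\cdot,-r^2)\le Ce^{-1/(64r^2)}$, so that region is also negligible.

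\emph{Step 2 (positivity-set terms).} Since $(0,0)$ is singular and $u\in C^{1,1}_x\cap C^{0,1}_t$ by \autoref{lem:constant_simplification_bound}, the expansion \eqref{eq:intro_basic_expansion} together with these bounds gives
\[
|u-p_2|(x,t)\le C(|x|^2+|t|),\qquad |\nabla(u-p_2)|(x,t)\le C(|x|+|t|^{1/2}).
\]
Replacing $p_2$ by a general $p\in\cP$ changes these by at most $C\|p\|(|x|^2)$ and $C\|p\|\,|x|$, which is harmless. It follows that
\[
|Z(u-p)|\le |x|\,|\nabla(u-p)|+2|t|\,|\partial_tu|\le C(|x|^2+r^2)\quad\text{at } t=-r^2,
\]
where we used $Zp=2p$ and $|x|\,|\nabla p|=2p$ by homogeneity. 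Lipschitz continuity of $f$ gives $|f(x,t)-f(0)|\le C(|x|+r)$. Hence both positivity-set integrands are bounded by $C(|x|^3+r^3)G(x,-r^2)$. Since $\int|x|^kG(x,-r^2)\,dx=c_kr^k$, both terms are $O(r^3)$.

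\emph{Step 3 (contact-set terms, the main obstacle).} On $\{u=0\}$ we have $u-p=-p$ and $Z(u-p)=-2p$. The remaining terms are therefore $f(0)\int_{\{u(\cdot,-r^2)=0\}}pG$ and $2f(0)\int_{\{u(\cdot,-r^2)=0\}}pG$. Both are nonnegative, so it suffices to prove
\[
\int_{\{u(\cdot,-r^2)=0\}} p\,G(\cdot,-r^2)\,dx\le Cr^3 .
\]
A crude bound gives only $O(r^2)$, and the singular-point expansion alone gives only $o(r^2)$.

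The first thing I would try is to show $p(x)\le C(|x|^3+r^3)$ on the contact set at time $-r^2$ for $|x|\lesssim 1$, using the time monotonicity. If $u(x,-r^2)=0$, then $u(x,s)=0$ for all $s\le -r^2$ because $\partial_tu\ge0$. Consequently $p(x)=|u-p|(x,s)$ for all such $s$. Applying the quadratic expansion at times $s\approx-|x|^2$ then yields $p(x)\le C|x|^2\cdot o(1)$, which is still not enough.

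To reach the cubic bound I would instead exploit $v:=\partial_tu\ge0$. It vanishes on $\{u=0\}$, and by the Lipschitz bound on $f$ it satisfies $|\bH v|\le C$ on $\{u>0\}$ in the distributional/viscosity sense. A barrier argument in the spirit of \cite{FRS24}, combined with \autoref{lem:half-harnack}, should then give $v\le C(|x|+|t|^{1/2})$ near the singular point. Integrating in time from a contact point gives $u(x,s)\le Cr^3$ for $s\in(-r^2,0)$ and $|x|\lesssim r$. Comparing with $u(\cdot,0)\ge p-C|x|^3$, which I would obtain from the same Lipschitz-$f$ perturbation of the blow-up, forces $p(x)\le C(|x|^3+r^3)$ on the relevant part of the contact set. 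Integrating against $G$ then gives $Cr^3$. The Gaussian tail handles $|x|\gg r$, since there $p\le C|x|^2\le C|x|^3/r$ and $\int|x|^3G\lesssim r^3$ absorbs the loss up to constants.

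This last step, producing the cubic control of $p$ on the contact set, is where I expect the real difficulty to lie. Everything else reduces to Steps 1 and 2.
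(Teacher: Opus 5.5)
Your Steps 1 and 2 are essentially the paper's proof. The paper bounds $|\langle\zeta(u-p),\zeta\bH(u-p)\rangle_r|$ by $C\|(u-p)_r\,(f(rx)-f(0))\|_{L^\infty}\le Cr^3$, which treats $\bH(u-p)$ as $f-f(0)$ everywhere. On $\{u=0\}$, however, it equals $-f(0)$. So you have correctly isolated a term, $f(0)\int_{\{u(\cdot,-r^2)=0\}}p\,\zeta^2G(\cdot,-r^2)$ (and twice that for $Zw$), that the paper's argument never handles.

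Your Step 3 does not control this term, because its premises fail:
\begin{itemize}
\item The bound $\partial_tu\le C(|x|+|t|^{1/2})$ is false at points of $\Sigma_m$, $m\le n-2$. There the barrier in \autoref{lem:cuad_cleaning_Sm} gives $\partial_tu\ge c\Phi$ in a cone, with $\Phi>0$ homogeneous of degree $N$ as small as you like. At points of $\Sigma_{n-1}$ the bound is at the level of $u-p_2=O(r^3)$. That is more than the paper ever proves: it gets $O(r^{3-\eps})$, and only downstream of this lemma.
\item The bound $u(\cdot,0)\ge p-C|x|^3$ fails outright for $p\ne p_2$: take $p_2=\frac{f(0)}{2}x_n^2$, $p=\frac{f(0)}{2n}|x|^2$ and $x\in\{x_n=0\}$. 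For $p=p_2$ it is again an improved expansion you do not have.
\item The tail step also loses a power: $\int|x|^3r^{-1}G(\cdot,-r^2)\,dx\sim r^2$, not $r^3$.
\end{itemize}

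In fact the target $\int_{\{u(\cdot,-r^2)=0\}}p\,G(\cdot,-r^2)\le Cr^3$, and with it the two-sided estimate of the statement, is false in general. Take $n\ge2$, a point of $\Sigma_0$ (e.g.\ where a melting ice ball disappears), and $p=p_2\ge c|x|^2$. We have $\bH(\partial_tu)=f\,\partial_t\chi_{\{u>0\}}\ge0$, and $\sup_{\gC_\rho}\partial_tu\to0$ by \autoref{lem:half-harnack}, since $u-p_2=o(\rho^2)$. Integrating against the backward kernel then gives
\[
c_Nr^N\le\int\partial_tu(\cdot,-r^2)\,\zeta G(\cdot,-r^2)\,dx\le C\int_0^{r^2}\sigma^{-n/2}\,dm(\sigma)+O(e^{-c/r^2}).
\]
The lower bound comes from the barrier above. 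Here $m(\sigma)$ is the measure of the points of $B_{r_\delta}$ whose melting time lies in $(-\sigma,0)$, so $m(\sigma)\le|\{u(\cdot,-\sigma)=0\}\cap B_{r_\delta}|$. That contact set lies in $B_{\sqrt\sigma}$, where $G(\cdot,-\sigma)\ge c\sigma^{-n/2}$. Now suppose the contact term were bounded by $C\sigma^{3/2}$. Combined with $\int_E|x|^2\ge c|E|^{1+2/n}$, this gives $m(\sigma)\le C\sigma^{n(n+3)/(2(n+2))}$, hence a right-hand side $\le Cr^{n/(n+2)}$. This contradicts the lower bound once $N<n/(n+2)$.

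So no completion of Step 3 exists, and the statement must be weakened. On $\{u=0\}\cap B_{1/4}$ the contact contributions have a sign or vanish a.e.: $w\,\bH w=f(0)p\ge0$, $Zw\,\bH w=2f(0)p\ge0$, and $2w-Zw=0$. Together with your Step 2, this proves
\[
\langle w,\bH w\rangle_r\ge-Cr^3,\qquad \langle Zw,\bH w\rangle_r\ge-Cr^3,\qquad |\langle 2w-Zw,\bH w\rangle_r|\le Cr^3.
\]
These one-sided bounds are what \autoref{lem:weiss_type_formula} and \autoref{lem:monotonicity_H} actually use.
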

\begin{proof}
    Since $u$ is a solution of \eqref{eq:stefan_problem} we have $u \in C^{1, 1}_x \cap C^{0, 1}_t(\gC_1)$, hence $|(u - p_2)_r| \leq Cr^2$ and $|Z(u - p_2)_r| \leq Cr^2$ for some constant $C$. Since the difference between $p_2$ and $p$ is a second degree polynomial we obtain $|(u - p)_r| \leq Cr^2$ and the constant depends on $\|p\|_{L^\infty(B_1)}$. For the first bound, using the error estimates from \autoref{lem:exp_error}, note that 
    $$
    |\langle w, \bH w \rangle_r| \leq Ce^{-\frac{1}{r^2}} + |\langle \zeta(u - p), \zeta\bH (u - p)\rangle_r|,
    $$
    since
    $$
    |\langle \zeta(u - p), \zeta\bH (u - p)\rangle_r| \leq C \|(u - p)_r (\bH(u - p))_r\|_{L^\infty(\gC_1)} \leq C\|(u - p)_r (f(rx)-f(0))\|_{L^\infty(\gC_1)} \leq Cr^3,
    $$
    the first estimate follows absorbing the exponential error into the cubic term. The second bound is analogous using $Z(u-p)_r = \left( Z (u-p) \right)_r$, which implies $|Z(u - p)_r| \leq Cr^2$ and the lemma follows. \end{proof}

\smallskip
\begin{remark}
    The exponentially small errors that appear when introducing the cutoff $\zeta$ can be bounded using \autoref{lem:exp_error}, as has been done in the proof of \autoref{lem:cubic_estimates}. Those error terms can always be absorbed by polynomial error terms.
\end{remark}
    
We introduce now our first almost-monotonicity result:
\smallskip
\begin{lemma}[Weiss'-type almost monotonicity formula]
\label{lem:weiss_type_formula}
Let $u: B_1 \times (-1, 1) \rightarrow [0, \infty)$ be a bounded solution of \eqref{eq:stefan_problem} and $(0, 0)$ a singular point. Given $p \in \cP$, set $w:= (u - p)\zeta$ and let
$$
W(r, w) := \frac{1}{r^4}(D(r, w) - 2 H(r, w)).
$$
Then, there exists $r_0 > 0$ such that:
\begin{enumerate}
    \item For all $r \in (0, r_0)$
    $$
    \diff{ \ }{r}W(r, w) \geq -C,
    $$
    where $C$ depends only on $n$, $\|u(\cdot, 0) \|_{L^\infty(B_1)}$ and $\|p\|_{L^\infty(B_1)}$.
    \item $W(0^+, w) = 0$.
    
    \item For all $r \in (0, r_0)$
    $$
    D(r, w) - 2H(r, w) \geq -Cr^5
    $$
    where $C$ depends only on $n$, $\|u(\cdot, 0) \|_{L^\infty(B_1)}$ and $\|p\|_{L^\infty(B_1)}$.
\end{enumerate}
\end{lemma}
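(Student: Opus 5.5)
Using \autoref{lem:formulas_for_H&D}, we compute the derivative of $W$ directly. The identities $H' = 2r^{-1}\langle w, Zw\rangle_r$ and $D' = 2r^{-1}\langle Zw,Zw\rangle_r - 4r\langle Zw,\bH w\rangle_r$ give
\begin{align*}
\diff{}{r}W(r,w) = \frac{1}{r^4}\Big(D' - 2H'\Big) - \frac{4}{r^5}\big(D - 2H\big).
\end{align*}
Next we substitute $D = \langle w, Zw\rangle_r - 2r^2\langle w,\bH w\rangle_r$. Grouping the terms by completing the square should produce the Weiss identity
\begin{align*}
\diff{}{r}W(r,w) = \frac{2}{r^5}\langle Zw-2w, Zw-2w\rangle_r - \frac{4}{r^3}\langle Zw - 2w, \bH w\rangle_r .
\end{align*}
The check is direct: the quadratic terms give $2r^{-5}(\langle Zw,Zw\rangle -4\langle w,Zw\rangle+4\langle w,w\rangle)$, and the $\bH$ terms give $-4r^{-3}\langle Zw,\bH w\rangle + 8r^{-3}\langle w,\bH w\rangle$. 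The first term is nonnegative. The second is bounded using \autoref{lem:cubic_estimates}: $|\langle w,\bH w\rangle_r| + |\langle Zw,\bH w\rangle_r| \le Cr^3$ for $r<r_0$, so $-4r^{-3}\langle Zw-2w,\bH w\rangle_r \ge -C$. This proves (1). The constant depends on $\|p\|_{L^\infty}$ and, through \autoref{lem:constant_simplification_bound}, on $\|u(\cdot,0)\|_{L^\infty}$ and $n$, which bounds the $C^{1,1}_x\cap C^{0,1}_t$ norm of $u$. The exponential cutoff errors of \autoref{lem:exp_error} are absorbed as in the remark.

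For (2), we rescale. Writing $v = u - p$, the change of variables $x = ry$ gives
\begin{align*}
H(r,w) = r^4\int (v_r/r^2)^2\zeta_r^2 G(y,-1)\,dy, \qquad D(r,w) = 2r^4\int |\nabla (v_r\zeta_r/r^2)|^2 G(y,-1)\,dy .
\end{align*}
Hence $W(r,w) = W(1, \tilde v_r)$ with $\tilde v_r := \zeta_r (u_r/r^2 - p)$. Since $(0,0)$ is singular, $u_r/r^2 \to p_2$ in $C^{1,\alpha}_{x,\mathrm{loc}}$. The uniform bound $|u_r|/r^2 \le C(1+|y|^2)$ together with Gaussian decay lets us pass to the limit by dominated convergence, so $W(0^+,w) = W(1, p_2 - p)$. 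Now $q := p_2 - p$ is a $2$-homogeneous polynomial in $x$, so $Zq = 2q$. It is harmonic, because both polynomials have Laplacian $f(0)$, and it is time independent, so $\bH q = 0$. Formula (2) of \autoref{lem:formulas_for_H&D}, applied to $q$ (justified via the Gaussian weight), gives $D(1,q) = \langle q,Zq\rangle_1 = 2H(1,q)$, hence $W(0^+,w)=0$.

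For (3), we integrate (1) from $0$ to $r$: $W(r,w) \ge W(0^+,w) - Cr = -Cr$. Multiplying by $r^4$ gives $D - 2H \ge -Cr^5$.

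The main obstacle is step (2). It requires justifying the convergence of the rescaled functionals, especially the gradient term with the moving cutoff $\zeta_r$ on the growing region $B_{1/(2r)}$, using only local $C^{1,\alpha}$ convergence plus quadratic growth bounds. It also requires checking that the integration-by-parts identity holds for the unbounded polynomial $q$ against the weight $G$. Both should follow from the uniform bounds and the decay of $G$, but they need care.
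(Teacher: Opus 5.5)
Your proposal is correct and follows the paper's proof essentially step for step. You use the same Weiss identity $\frac{d}{dr}W(r,w)=\frac{2}{r^5}\langle Zw-2w,Zw-2w\rangle_r-\frac{4}{r^3}\langle Zw-2w,\bH w\rangle_r$ with the $\bH$-terms controlled by \autoref{lem:cubic_estimates}, the same reduction $W(0^+,w)=W(1,p_2-p)=0$ via rescaling and the harmonicity and $2$-homogeneity of $p_2-p$, and the same integration of (1) to obtain (3); the only difference is that you spell out the dominated-convergence justification of the limit in (2) (uniform polynomial growth bounds against the Gaussian weight), which the paper simply asserts.
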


\begin{proof}

    \mbox{}\\[0.6em] 
    \noindent \textit{(a)} Using \autoref{lem:formulas_for_H&D} and \autoref{lem:cubic_estimates}, the first one for computing the derivatives $D'(r, w)$ and $H'(r, w)$ and the second one for controlling the terms $\langle w, \bH w \rangle_r$ and $\langle Zw, \bH w \rangle_r$, we obtain
    \begin{equation}
    \label{eq:weiss_formula}
    \begin{aligned}
        \diff{ \ }{r}W(r, w) &= \frac{D'(r, w) - 2H'(r, w)}{r^4} - 4\frac{D(r, w) - 2H(r, w)}{r^5}  \\
        &= \frac{2}{r^5} \left( \langle Zw, Z w \rangle_r - 2r^2 \langle Zw, \bH w\rangle_r - 4\langle w, Zw \rangle_r + 4r^2 \langle w, \bH w \rangle_r - 4 \langle w, w \rangle_r\right)  \\
        &=\frac{2}{r^5}(\langle Zw - 2w, Zw -2w \rangle_r + 4r^2 \langle w, \bH w \rangle_r -2r^2\langle Zw, \bH w\rangle_r)\\
        & \geq \frac{2}{r^5}\langle Zw - 2w, Zw -2w \rangle_r -C  \\
        & \geq -C. 
    \end{aligned}
    \end{equation}

    since the exponential error can be absorbed in the polynomial term. An analogous bound holds for the term $\langle Zw, \bH w\rangle_r$. The constant $C$ depends only on $\|u(\cdot, 0)\|_{L^\infty(B_1)}$ by \autoref{lem:constant_simplification_bound}.
    
    \par\smallskip
    \noindent \textit{(b)} To compute the limit note that, since $(0, 0)$ is a singular point, we have $(u-p)_r = (p_2 - p)_r + o(r^2)$, so
    $$
    W(0^+, w) = \lim_{r \downarrow 0 } W(r, w) = \lim_{r \downarrow 0 } W(1, r^{-2}w_r) = \lim_{r \downarrow 0 } W(1, p_2 - p).   
    $$

    Integrating by parts and using $\Delta p_2 = \Delta p$, $\nabla G = \frac{x}{2}G$ and also $x \cdot \nabla(p_2 - p) = 2(p_2 - p)$ since $p_2$ and $p$ are $2-$homogeneous, we get
    \begin{align*}
        D(1, p_2 - p) &= 2\int_{\{t = -1\}} \nabla(p_2 - p) \cdot \nabla(p_2 - p) G\\
        &= -2\int_{\{t = -1\}}(p_2 - p)G\Delta(p_2-p) - 2\int_{\{t = -1\}} (p_2 - p) \nabla(p_2 - p) \cdot \nabla G  \\ &= \int_{\{t = -1\}} (p_2 - p) x \cdot \nabla(p_2 - p)G = 2 H(1, p_2 - p).
    \end{align*}
    
    \par\smallskip
    \noindent \textit{(c)} The statement is a direct consequence of (a) and (b). Namely, integrating \eqref{eq:weiss_formula} from $0$ to $r$ we obtain
    $$
    W(r, w) = \frac{1}{r^4} \left(D(r, w) -2H(r, w) \right) \geq -Cr,
    $$
    and the claim follows.
\end{proof}

\smallskip
\begin{lemma}
    \label{lem:freq_direct_bound}
    Let $u: B_1 \times (-1, 1) \rightarrow [0, \infty)$ be a bounded solution of \eqref{eq:stefan_problem} and $(0, 0)$ a singular point. Given $p \in \cP$ and $\gamma \in \left(2, \frac{5}{2} \right)$, set $w:= (u - p_2)$. Then there exists $r_0 > 0$ such that for all $r \in (0, r_0)$
    \begin{equation}
        \label{eq:frequency_formula_bound}
        \phi^{\gamma}(r, w) \geq 2-Cr^{\eps}
    \end{equation}
    for some $\eps \in (0, 1)$ where $C$ is a constant depending only on $n$, $\|u(\cdot, 0) \|_{L^\infty(B_1)}$ and $\|p\|_{L^\infty(B_1)}$.
\end{lemma}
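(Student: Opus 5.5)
Let $w=(u-p)\zeta$ (or $w=(u-p_2)\zeta$). The plan is to bound $\phi^\gamma(r,w)-2$ from below directly, using item (c) of \autoref{lem:weiss_type_formula}. Writing
$$
\phi^\gamma(r,w)-2=\frac{D(r,w)-2H(r,w)+(\gamma-2)r^{2\gamma}}{H(r,w)+r^{2\gamma}},
$$
we have $\gamma>2$, so the truncation contributes a nonnegative term $(\gamma-2)r^{2\gamma}$ to the numerator. By \autoref{lem:weiss_type_formula}(c), $D(r,w)-2H(r,w)\ge -Cr^5$ for $r<r_0$. Hence
$$
\phi^\gamma(r,w)-2\ \ge\ \frac{-Cr^5+(\gamma-2)r^{2\gamma}}{H(r,w)+r^{2\gamma}}\ \ge\ \frac{-Cr^5}{H(r,w)+r^{2\gamma}}.
$$

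It remains to control the denominator from below. Using only $H(r,w)\ge 0$, we get
$$
\phi^\gamma(r,w)\ge 2-Cr^{5-2\gamma}.
$$
Since $\gamma<5/2$, the exponent $\eps:=5-2\gamma$ lies in $(0,1)$, which is exactly the claimed bound. This explains the restriction $\gamma\in(2,5/2)$: the cubic error $r^3$ in \autoref{lem:cubic_estimates} produces, after the weights $r^2$ in $D$, the $r^5$ error in the Weiss formula. That error must be dominated by the truncation $r^{2\gamma}$.

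The steps are therefore the following.
\begin{enumerate}
\item Check that \autoref{lem:weiss_type_formula} applies to the chosen $w$. If $w=u-p_2$ without cut-off, insert $\zeta$ and absorb the exponentially small discrepancy via \autoref{lem:exp_error}. Since $H$ and $D$ of $w$ and $\zeta w$ differ by $O(e^{-c/r^2})$ thanks to the Gaussian weight, this error is absorbed into $Cr^5$.
\item Apply \autoref{lem:weiss_type_formula}(c) with $p=p_2\in\cP$, or with the given $p$. The constants depend only on $n$, $\|u(\cdot,0)\|_{L^\infty}$ and $\|p\|_{L^\infty}$, as in that lemma via \autoref{lem:constant_simplification_bound}.
\item Carry out the algebraic manipulation above.
\end{enumerate}

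I expect the main (and only mild) obstacle to be step 1: justifying that replacing $w$ by its cut-off version changes $D$ and $H$ by at most $Cr^5$, uniformly in $r<r_0$. This follows from the Gaussian decay of $G(\cdot,-r^2)$ outside $B_{1/4}$ together with the $C^{1,1}_x\cap C^{0,1}_t$ bounds on $u$ from \autoref{lem:constant_simplification_bound}. Everything else is a direct consequence of the Weiss almost-monotonicity already established.
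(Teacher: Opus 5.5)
Your proposal is correct and follows the paper's proof: discard the nonnegative term $(\gamma-2)r^{2\gamma}$, use \autoref{lem:weiss_type_formula}(c) to get $D(r,w)-2H(r,w)\ge -Cr^5$, and bound the denominator below by $r^{2\gamma}$, which gives $\phi^\gamma(r,w)\ge 2-Cr^{5-2\gamma}$ with $\eps=5-2\gamma\in(0,1)$. Your cut-off step is a harmless clarification rather than an obstacle: since $u$ lives only on $B_1\times(-1,1)$, the quantities $D$ and $H$ only make sense for $(u-p_2)\zeta$, and that is how the paper implicitly reads the statement.
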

\begin{proof}
    Since $\gamma > 2$ and using \autoref{lem:weiss_type_formula} \textit{(c)} we get
    $$
    \phi^\gamma(r, w) - 2 = \frac{D(r, w) + \gamma r^{2\gamma} - 2\left(H(r, w) + r^{2\gamma} \right)}{H(r, w) + r^{2\gamma}} \geq \frac{D(r, w) - 2H(r, w)}{H(r, w) + r^{2\gamma}} \geq -Cr^{5-2\gamma}.
    $$
The proof is completed by taking $\eps = 5 - 2\gamma$, which is positive since $\gamma < 5/2$.
\end{proof}

The previous results allow us to make the following definitions.
\begin{definition}
    \label{def:lambdas}
    Let $u: B_1 \times (-1, 1) \rightarrow [0, \infty)$ be a bounded solution of \eqref{eq:stefan_problem} and $(0, 0)$ a singular point. Given $p \in \cP$ and $\gamma \in \left(2, \frac{5}{2} \right)$ we define:
    $$
    \lambda := \lim_{r \downarrow0} \phi^\gamma \left(r, (u-p)\zeta \right)
    \qquad \text{and} \qquad
    \lambda^* := \lim_{r \downarrow0} \phi^\gamma \left(r, (u-p_2)\zeta \right).
    $$

    Under the additional hypothesis $\|w_r\|_{L^\infty(\gC_1)} \leq C r^{2+\alpha}$ for some $\alpha > 0$, the previous definition can be extended for $\gamma \geq \frac{5}{2}$, see \autoref{rmk:extending_definition_lambdas}.
\end{definition}
\smallskip
\begin{lemma}[Frequency formula]
    \label{lem:frequency_formula_monotonicity}
    Let $u: B_1 \times (-1, 1) \rightarrow [0, \infty)$ be a bounded solution of \eqref{eq:stefan_problem} and $(0, 0)$ a singular point. Given $p \in \cP$, set $w:= (u - p)\zeta$. Then, there exists $r_0 > 0$ such that for all $\gamma \in \left(2, \frac{5}{2} \right)$ and for all $r \in (0, r_0)$ it holds
    \begin{equation}
        \label{eq:frequency_formula_monotonicity}
        \diff{}{r} \phi^\gamma(r, w) \geq \frac{2}{r} \left(\frac{2r^2 \langle w, \bH w\rangle_r}{H(r, w) + r^{2\gamma}}\right)^2 -Cr^{\eps - 1}
    \end{equation}
    for some $\eps \in (0, 1)$, where $C$ is a constant depending only on $n$, $\|u(\cdot, 0) \|_{L^\infty(B_1)}$ and $\|p\|_{L^\infty(B_1)}$.
\end{lemma}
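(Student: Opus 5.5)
Starting from \autoref{lem:frequency_error_formula}, the first numerator term $\langle Zw, Zw \rangle_r \langle w, w \rangle_r - \langle w, Zw \rangle_r^2$ is nonnegative by Cauchy--Schwarz, so we drop it. The second term gives exactly the principal term on the right of \eqref{eq:frequency_formula_monotonicity}. It therefore suffices to prove
$$
\frac{2}{r}\,\frac{|E^\gamma(r,w)|}{(H(r,w)+r^{2\gamma})^2} \le C r^{\eps-1}
$$
for $r\in(0,r_0)$.

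We bound the two pieces of $E^\gamma$ separately using \autoref{lem:cubic_estimates}: $|\langle w,\bH w\rangle_r|\le Cr^3$ and $|\langle Zw,\bH w\rangle_r|\le Cr^3$. The second piece is harmless:
$$
\frac{2r^2|\langle Zw,\bH w\rangle_r|\,(H+r^{2\gamma})}{(H+r^{2\gamma})^2}\le \frac{Cr^5}{H+r^{2\gamma}}\le Cr^{5-2\gamma}.
$$
For the first piece we need to control $D(r,w)+r^{2\gamma}$ (we may as well work with $D+\gamma r^{2\gamma}$) in terms of $H+r^{2\gamma}$. We use the identity from \autoref{lem:formulas_for_H&D}(b), $D=\langle w,Zw\rangle_r-2r^2\langle w,\bH w\rangle_r$. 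By Cauchy--Schwarz, $D\le H^{1/2}\langle Zw,Zw\rangle_r^{1/2}+Cr^5$. Since $u\in C^{1,1}_x\cap C^{0,1}_t$ by \autoref{lem:constant_simplification_bound}, we have $|w_r|,|(Zw)_r|\le Cr^2$, hence $\langle Zw,Zw\rangle_r\le Cr^4$ and $D\le C r^2 H^{1/2}+Cr^5$. This gives
$$
\frac{2r^2|\langle w,\bH w\rangle_r|\,(D+r^{2\gamma})}{(H+r^{2\gamma})^2}\le \frac{Cr^5\,(r^2H^{1/2}+r^5+r^{2\gamma})}{(H+r^{2\gamma})^2}.
$$
Using $H+r^{2\gamma}\ge r^{2\gamma}$, the three contributions are bounded as follows. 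The term with $r^{2\gamma}$ gives $Cr^{5-2\gamma}$. The term with $r^5$ gives $Cr^{10-4\gamma}$. The term with $r^2H^{1/2}$ gives $Cr^7(H+r^{2\gamma})^{-3/2}\le Cr^{7-3\gamma}$.

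Multiplying each bound by $2/r$, we obtain $Cr^{\eps-1}$ with $\eps=\min\{5-2\gamma,\,10-4\gamma,\,7-3\gamma\}$. This is positive whenever $\gamma<7/3$. That is the main obstacle: the crude estimate $D\lesssim r^2H^{1/2}$ loses too much for $\gamma\in[7/3,5/2)$.

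To cover the full range $(2,5/2)$, I would instead compare $D$ with $H$ more sharply. The idea is to show $D(r,w)\le C\,(H(r,w)+r^{2\gamma})$, which gives the first term $\le Cr^5/(H+r^{2\gamma})\le Cr^{5-2\gamma}$. To get this, argue by cases.

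\emph{Case 1: $\phi^\gamma(r,w)\le C_1$.} Then $D+\gamma r^{2\gamma}\le C_1(H+r^{2\gamma})$ directly.

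\emph{Case 2: $\phi^\gamma(r,w)$ is large.} Here $\langle w,Zw\rangle_r$ is comparable to $D$ up to $Cr^5$, so $H'(r,w)=2r^{-1}\langle w,Zw\rangle_r$ is large relative to $H$. One can then use a Gronwall-type argument on $\log(H+r^{2\gamma})$, together with the lower bound of \autoref{lem:freq_direct_bound}, to bound the frequency a priori. I expect this a priori upper bound on $\phi^\gamma$ (uniform for small $r$) to be the delicate step.

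If it fails, I would fall back to proving \eqref{eq:frequency_formula_monotonicity} first for $\gamma<7/3$. I would then bootstrap: use the resulting monotonicity to bound $\phi^\gamma$ from above, which gives $D\le C(H+r^{2\gamma})$, and then extend the estimate to all $\gamma<5/2$.
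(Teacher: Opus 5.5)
Your reduction (dropping the Cauchy--Schwarz term, bounding the $\langle Zw,\bH w\rangle_r$ piece by $Cr^{4-2\gamma}$) and your estimate for $\gamma<7/3$ are correct. For $\gamma\in[7/3,5/2)$, however, the proposal contains no proof: the uniform upper bound on $\phi^\gamma$ that you rightly identify as the crux is only announced.

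The Case~2 sketch does not deliver it. The identity $\frac{d}{dr}\log(H+r^{2\gamma})=\frac{2}{r}\big(\phi^\gamma+\frac{2r^2\langle w,\bH w\rangle_r}{H+r^{2\gamma}}\big)$ only controls $\int\phi^\gamma\,\frac{ds}{s}$ over ranges of scales, not $\phi^\gamma(r)$ pointwise. Also, the lower bound $\phi^\gamma\ge 2-Cr^\eps$ plays no role in bounding $\phi^\gamma$ from above.

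The fallback is not correct as stated either. Boundedness of $\phi^{\gamma_0}$ for some $\gamma_0<7/3$ gives $D\le C(H+r^{2\gamma_0})$, not $D\le C(H+r^{2\gamma})$. Inserting it produces an extra term $Cr^{4+2\gamma_0-4\gamma}$, which has the form $Cr^{\eps-1}$ with $\eps>0$ only when $\gamma<(5+2\gamma_0)/4<5/2$. Iterating $\gamma_{k+1}<(5+2\gamma_k)/4$ converges to $5/2$, so that route can be repaired, but not in one step.

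The missing idea is that you never need to estimate $D$ on its own. Keep $\phi^\gamma$ explicit (the correct $E^\gamma$ carries $D+\gamma r^{2\gamma}$, as in the paper's proof). The cubic estimates, together with $H+r^{2\gamma}\ge r^{2\gamma}$ and $\phi^\gamma\ge 0$, give
\[
\frac{2}{r}\,\frac{E^\gamma(r,w)}{\big(H(r,w)+r^{2\gamma}\big)^2}=\frac{4r\big(\langle w,\bH w\rangle_r\,\phi^\gamma(r,w)-\langle Zw,\bH w\rangle_r\big)}{H(r,w)+r^{2\gamma}}\ \ge\ -Cr^{4-2\gamma}\big(1+\phi^\gamma(r,w)\big),
\]
hence $\frac{d}{dr}\log\big(1+\phi^\gamma(r,w)\big)\ge -Cr^{4-2\gamma}$. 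Since $r^{4-2\gamma}$ is integrable at $0$ exactly when $\gamma<5/2$, integrating over $(r,r_0)$ gives
\[
1+\phi^\gamma(r,w)\le\big(1+\phi^\gamma(r_0,w)\big)\exp\big(Cr_0^{5-2\gamma}/(5-2\gamma)\big)\quad\text{for all } r\in(0,r_0).
\]
Feeding this bound back into the first display yields the claim with $\eps=5-2\gamma$ on the whole range $(2,5/2)$.

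The paper writes the error in precisely this factored form and bounds it by $-Cr^{4-2\gamma}$, tacitly treating $\phi^\gamma$ as bounded. This Gronwall step is what justifies that, and it is exactly the step your proposal leaves open.
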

\begin{proof}
    Using the direct computation of the derivative of the frequency formula (\autoref{lem:frequency_error_formula}) we get
    \begin{align*}
\diff{}{r}\phi^\gamma(r, w) \geq \frac{2}{r}\left(\frac{\left(2r^2\langle w, \textbf{H}w \rangle_r\right)^2 + E^\gamma(r, w) }{\left( H(r, w) + r^{2\gamma}\right)^2}\right).
\end{align*}
We can bound the error term by
    \begin{align*}
        \frac{E^\gamma(r, w)}{r\left(H(r, w) + r^{2\gamma}\right)} &= \frac{2r \langle w, \bH w \rangle_r \left(D(r, w) + \gamma r^{2\gamma} \right) - 2r \langle Zw, \textbf{H}w \rangle_r \left( H(r, w) + r^{2\gamma} \right)}{H(r, w) + r^{2\gamma}} \\
        &=\frac{2r \langle w, \bH w \rangle_r \phi^\gamma(r, w) - 2r\langle Zw, \bH w \rangle_r}{H(r, w) + r^{2\gamma}} \\
        & \geq -Cr^{4-2\gamma}.
    \end{align*}
    Setting $\eps = 5 - 2\gamma$ completes the proof.
\end{proof}

We introduce the following Monneau-Type monotonicity formula.
\smallskip
\begin{lemma}
    \label{lem:monotonicity_H}
    Let $u: B_1 \times (-1, 1) \rightarrow [0, \infty)$ be a bounded solution of \eqref{eq:stefan_problem} and $(0, 0)$ a singular point. Given $p \in \cP$, set $w:= (u - p)\zeta$. Then
    $$
    \diff{}{r} \left( \frac{H(r, w)}{r^4}\right) \geq -C
    $$

    where $C$ depends only on $n$, $\|u(\cdot, 0) \|_{L^\infty(B_1)}$ and $\|p\|_{L^\infty(B_1)}$.
\end{lemma}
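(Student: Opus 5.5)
Writing $\frac{d}{dr}\big(r^{-4}H\big)=r^{-4}H'-4r^{-5}H$, the plan is to express everything through the bilinear form and then use the Weiss-type estimate. By \autoref{lem:formulas_for_H&D}(1), $H'(r,w)=2r^{-1}\langle w,Zw\rangle_r$, so
\[
\frac{d}{dr}\left(\frac{H(r,w)}{r^4}\right)=\frac{2}{r^5}\big(\langle w,Zw\rangle_r-2\langle w,w\rangle_r\big)=\frac{2}{r^5}\langle w,Zw-2w\rangle_r .
\]
Next, \autoref{lem:formulas_for_H&D}(2) gives $\langle w,Zw\rangle_r=D(r,w)+2r^2\langle w,\bH w\rangle_r$. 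Substituting, the derivative becomes
\[
\frac{2}{r^5}\Big(D(r,w)-2H(r,w)\Big)+\frac{4}{r^3}\langle w,\bH w\rangle_r .
\]

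Each of the two terms is then controlled by an earlier result. For the first term, \autoref{lem:weiss_type_formula}(c) yields $D(r,w)-2H(r,w)\ge -Cr^5$ for $r\in(0,r_0)$, so this term is $\ge -2C$. For the second term, \autoref{lem:cubic_estimates} gives $|\langle w,\bH w\rangle_r|\le Cr^3$, so it is $\ge -4C$. Adding the two bounds gives the claim for $r<r_0$.

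The constants depend on $n$, on $\|u(\cdot,0)\|_{L^\infty(B_1)}$ (through \autoref{lem:constant_simplification_bound}), and on $\|p\|_{L^\infty(B_1)}$. The exponential cut-off errors coming from $\zeta$ are absorbed as in the Remark following \autoref{lem:cubic_estimates}.

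Away from $0$, that is for $r\in[r_0,1/2)$, the quantity $r^{-4}H$ has bounded derivative. This follows directly from the $C^{1,1}_x\cap C^{0,1}_t$ bounds on $u$, using $|H'|\le Cr^{-1}\sup|w||Zw|$. The constant $C$ can therefore be enlarged so that the estimate holds on the whole range.

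The main point needing care is the sign of the error terms. In the case $f\equiv1$ of \cite{FRS24} one has $\langle w,\bH w\rangle_r\ge0$, which gives exact monotonicity; here this term has no sign and is only $O(r^3)$. The cancellation $r^{-3}\cdot r^3$ is exactly what turns this into a bounded error $-C$ rather than a divergent one. The same $O(r^5)$ control of $D-2H$ comes from integrating the Weiss formula from $W(0^+)=0$.
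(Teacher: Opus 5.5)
Your proposal is correct and matches the paper's proof essentially line for line. Like the paper, you use \autoref{lem:formulas_for_H&D} to rewrite $r^5\frac{d}{dr}(H/r^4)=2(D-2H)+4r^2\langle w,\bH w\rangle_r$, then bound the first term by \autoref{lem:weiss_type_formula}(c) and the second by \autoref{lem:cubic_estimates}. Your extra step for $r\in[r_0,1/2)$ does not appear in the paper, which only treats small $r$; it is harmless, though enlarging $C$ there makes the constant depend on $r_0$ too.
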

\begin{proof}
    By the formulas from \autoref{lem:formulas_for_H&D},
    $$
    \diff{}{r} \left( \frac{H(r, w)}{r^4}\right) = \frac{rH'(r, w) - 4H(r, w)}{r^5} = \frac{2\left(D(r, w)-2H(r, w)\right) + 4r^2\langle w, \bH w\rangle_r}{r^5} \geq -C.
    $$
    In the last inequality we have used \autoref{lem:weiss_type_formula} c) and the cubic estimates from \autoref{lem:cubic_estimates}.
\end{proof}

\smallskip
\begin{lemma}
    \label{lem:FRS_5.6}
    Let $u: B_1 \times (-1, 1) \rightarrow [0, \infty)$ be a bounded solution of \eqref{eq:stefan_problem} and $(0, 0)$ a singular point. Given $p \in \cP$, set $w:= (u - p)\zeta$ and let $\gamma > 2$. Assume
    $$
    \diff{}{r} \phi^\gamma(r, w) \geq \frac{2}{r} \left(\frac{2r^2 \langle w, \bH w\rangle_r}{H(r, w) + r^{2\gamma}}\right)^2 -Cr^{\eps - 1} \qquad \text{and} \qquad \frac{2r^2\langle w, \bH w \rangle_r}{H(r, w) + r^{2\gamma}} \geq -Cr^\eps
    $$
    
    for some $\eps \in (0, 1)$ and $C$. Assume that there exists $\delta > 0$ and $R \in (0, 1)$ such that $\phi^\gamma(r, w) \leq \lambda + \frac{\delta}{4}$ for all $r \in (0, R)$. Then 

    \begin{equation}
    \label{eq:main_comparison_FRS_5.6}
        c \left(\frac{R}{r} \right)^{2\lambda} \leq \frac{H(R, w) + R^{2\gamma}}{H(r, w) + r^{2\gamma}} \leq C_\delta \left( \frac{R}{r} \right)^{2\lambda + \delta}    
    \end{equation}

    where $c, C_\delta > 0$ depend only on $n$, $\|u(\cdot, 0) \|_{L^\infty(B_1)}$, $\eps$ and $\|p\|_{L^\infty(B_1)}$; $C_\delta$ also depends on $\delta$.
\end{lemma}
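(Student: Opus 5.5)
We integrate a differential inequality for the logarithm of $\tilde H(r) := H(r,w) + r^{2\gamma}$. The argument is the one of \cite[Lemma 5.6]{FRS24}, with every sign condition replaced by an error that is integrable in $r$.

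\textbf{Step 1: a formula for $\tilde H'/\tilde H$.} By \autoref{lem:formulas_for_H&D}(1)--(2),
$$
rH'(r,w) = 2\langle w, Zw\rangle_r = 2D(r,w) + 4r^2\langle w,\bH w\rangle_r ,
$$
and $r\,\frac{d}{dr}r^{2\gamma} = 2\gamma r^{2\gamma}$. Adding these,
$$
\frac{r\tilde H'(r)}{\tilde H(r)} = 2\phi^\gamma(r,w) + 2\,\frac{2r^2\langle w,\bH w\rangle_r}{\tilde H(r)} .
$$
Write $a(r) := \frac{2r^2\langle w,\bH w\rangle_r}{\tilde H(r)}$. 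Then $\frac{d}{dr}\log\tilde H = \frac{2}{r}\bigl(\phi^\gamma + a\bigr)$, and
$$
\log\frac{\tilde H(R)}{\tilde H(r)} = \int_r^R \frac{2}{s}\bigl(\phi^\gamma(s,w) + a(s)\bigr)\,ds .
$$

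\textbf{Step 2: lower bound.} First control $\phi^\gamma$ from below. The first hypothesis gives $\frac{d}{ds}\phi^\gamma \ge -Cs^{\eps-1}$, which is integrable on $(0,R)$. So $\phi^\gamma(s)+\frac{C}{\eps}s^\eps$ is nondecreasing, the limit $\lambda=\phi^\gamma(0^+)$ exists, and $\phi^\gamma(s) \ge \lambda - Cs^\eps$. The second hypothesis gives $a(s)\ge -Cs^\eps$. Hence
$$
\log\frac{\tilde H(R)}{\tilde H(r)} \ge 2\lambda\log\frac{R}{r} - C\int_0^R s^{\eps-1}\,ds ,
$$
and the last integral is bounded by $C/\eps$. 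This yields the left inequality with $c=e^{-C/\eps}$.

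\textbf{Step 3: upper bound (main obstacle).} Here we need $\int_r^R \frac{a(s)}{s}\,ds \le \frac{\delta}{4}\log\frac Rr + C_\delta$, but $a$ is only controlled in $L^2(ds/s)$ through the first hypothesis. Integrating that hypothesis and using $\phi^\gamma\le\lambda+\frac{\delta}{4}$ on $(0,R)$ together with $\phi^\gamma(0^+)=\lambda$ gives
$$
\int_0^R \frac{2}{s}a(s)^2\,ds \le \phi^\gamma(R) - \lambda + C\int_0^R s^{\eps-1}\,ds \le \frac{\delta}{4} + C .
$$
Young's inequality, $2a \le \frac{\delta}{2} + \frac{2}{\delta}a^2$, then gives
$$
\int_r^R \frac{2a(s)}{s}\,ds \le \frac{\delta}{2}\log\frac Rr + \frac{1}{\delta}\int_r^R\frac{2a(s)^2}{s}\,ds \le \frac{\delta}{2}\log\frac Rr + C_\delta .
$$
Combining this with $\int_r^R\frac{2\phi^\gamma}{s}\,ds\le(2\lambda+\frac{\delta}{2})\log\frac Rr$ and exponentiating gives the right inequality with exponent $2\lambda+\delta$.

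The delicate point is keeping all constants independent of $r$ and $R$: they may depend only on $\eps$, on $\delta$, and on the constants of the two hypotheses. The hypotheses are stated with constants depending on $n$, $\|u(\cdot,0)\|_{L^\infty(B_1)}$ and $\|p\|_{L^\infty(B_1)}$, so this holds.
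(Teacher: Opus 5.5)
Your proof is correct and follows the paper's argument: the same identity $\frac{d}{dr}\log\bigl(H(r,w)+r^{2\gamma}\bigr) = \frac{2}{r}\bigl(\phi^\gamma + a\bigr)$ (the paper calls $a$ by $F$), the same lower bound from $\phi^\gamma \ge \lambda - Cr^{\eps}$ and $a \ge -Cr^{\eps}$, and the same use of the first hypothesis to bound $\int_r^R \frac{1}{s}a(s)^2\,ds$. The only difference is cosmetic. You apply Young's inequality pointwise to $\int_r^R \frac{2a}{s}\,ds$. The paper instead uses Cauchy--Schwarz to get $C(\log(R/r))^{1/2}$ and then absorbs this into $\frac{\delta}{2}\log(R/r) + C_\delta$, which is the same estimate.
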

\begin{proof}
    Define
    $$
    F(r) := \frac{2r^2 \langle w, \bH w\rangle_r}{H(r, w) + r^{2\gamma}}
    $$

    and note that, by assumption,
    $$
    \diff{}{r} \phi^\gamma(r, w) + Cr^{\eps - 1} \geq \frac{2}{r} (F(r))^2.
    $$

    Using the formula for $H'(r, w)$ from \autoref{lem:formulas_for_H&D} we get
    \begin{equation}
    \label{eq:equality_derivative_H}
    \frac{\diff{}{r}(H(r, w)+r^{2\gamma})}{H(r, w)+r^{2\gamma}} =  \frac{ 2\left(D(r, w) + \gamma r^{2\gamma} + 2r^2 \langle w, \bH w \rangle_r \right)}{r\left(H(r, w)+r^{2\gamma} \right)} =\frac{2}{r} \left(\phi^\gamma(r, w) + F(r) \right).
    \end{equation}
    Integrating between $r$ and $R$ we obtain
    \begin{equation}
        \label{eq:log H+r/H+r}
    \log \frac{H(R, w)+R^{2\gamma}}{H(r, w)+r^{2\gamma}} \leq \int_r^R \frac{2\lambda +\delta/2 + 2F(s)}{s} ds \leq \left(2\lambda + \frac{\delta}{2} \right) \log(R/r) + \int_r^R \frac{2}{s}F(s) .
    \end{equation}
    We need to control the term with $F(s)$, to do so we can use Cauchy-Schwarz
    \begin{align}
        \label{eq:control_of_F}
        \left| \int_r^R \frac{1}{s} F(s)ds\right| &\leq \left( \int_r^R\frac{1}{s}(F(s))^2ds\right)^{1/2} \left( \int_r^R \frac{1}{s}ds\right)^{1/2} \nonumber \\ 
        &\leq \left( \int_r^R \frac{1}{2} \left( \diff{}{s}\phi^\gamma(s, w) + Cs^{\eps - 1}\right)ds\right)^{1/2} \left( \log(R/r) \right)^{1/2} \\
        &\leq \left( \frac{\lambda}{2} + \frac{\delta}{8} +\frac{C}{2\eps}(R^\eps - r^\eps)\right)^{1/2} \left( \log(R/r) \right)^{1/2} \leq C \left( \log(R/r) \right)^{1/2}. \nonumber
    \end{align}
    Substituting \eqref{eq:control_of_F} in \eqref{eq:log H+r/H+r} we get the upper bound
    $$
    \log \frac{H(R, w)+R^{2\gamma}}{H(r, w)+r^{2\gamma}} \leq \left( 2\lambda + \frac{\delta}{2} \right) \log(R/r) + C\left(\log(R/r) \right)^{1/2} \leq \left( 2\lambda + \delta \right) \log(R/r) + C_\delta, 
    $$
    where the term $C \left( \log(R/r) \right)^{1/2}$ was absorbed. 
    
    For the lower bound note that, for $r$ small enough, we have
    $$
    \phi^\gamma(r, w) \geq \lambda - \frac{Cr^\eps}{\eps}.
    $$
    Using $F(r) \geq -Cr^\eps$ and substituting in \eqref{eq:equality_derivative_H} we get the lower bound
    $$
    \frac{\diff{}{r}(H(r, w)+r^{2\gamma})}{H(r, w)+r^{2\gamma}} \geq \frac{2}{r} \left( \lambda - C\frac{\eps + 1}{\eps} r^\eps\right).
    $$
    Integrating from $r$ to $R$ we get
    \begin{align*}
        \label{eq:log H+r/H+r}
    \log \frac{H(R, w)+R^{2\gamma}}{H(r, w)+r^{2\gamma}} 
    &\geq \int_r^R \frac{2}{s}\left(\lambda -C\frac{\eps+1}{\eps}s^\eps \right) ds \\
    &\geq 2\lambda\log(R/r) - \int_r^R 2C\frac{\eps + 1}{\eps}s^{\eps-1} ds
    \geq 2\lambda \log(R/r) -C_\eps,
    \end{align*}
    and the claim follows.
\end{proof}

The previous lemma is one of the core results of this theory, since it gives us a relationship between $\lambda$ and $\gamma$. In many of the future lemmas, we will distinguish between the case $\lambda < \gamma$ and $\lambda = \gamma$. Here we present some of its immediate consequences. 

\smallskip
\begin{corollary} Let $u: B_1 \times (-1, 1) \rightarrow [0, \infty)$ be a bounded solution of \eqref{eq:stefan_problem} and $(0, 0)$ a singular point. Let $\gamma \in \left(2, \frac{5}{2} \right)$ and $\lambda$ be given by \autoref{def:lambdas}. Then $\lambda \leq \gamma$.
\end{corollary}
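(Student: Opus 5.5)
We argue by contradiction, using the lower bound in \eqref{eq:main_comparison_FRS_5.6} of \autoref{lem:FRS_5.6}. Set $w := (u-p)\zeta$ and suppose $\lambda > \gamma$. First we check that \autoref{lem:FRS_5.6} applies. The first hypothesis is exactly \autoref{lem:frequency_formula_monotonicity}. For the second, the cubic estimate of \autoref{lem:cubic_estimates} gives
$$
\left|\frac{2r^2\langle w, \bH w\rangle_r}{H(r,w)+r^{2\gamma}}\right| \le \frac{Cr^5}{r^{2\gamma}} = Cr^{5-2\gamma},
$$
which is of the form $Cr^{\eps}$ with $\eps = 5-2\gamma \in (0,1)$. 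Almost-monotonicity also shows that $\phi^\gamma(r,w) + Cr^\eps/\eps$ is nondecreasing, so the limit $\lambda$ exists. Hence, for any $\delta>0$, there is $R$ with $\phi^\gamma(r,w) \le \lambda + \delta/4$ on $(0,R)$. (In fact only the lower-bound half of the lemma is needed, and that half only uses $\phi^\gamma \ge \lambda - Cr^\eps/\eps$ together with $F \ge -Cr^\eps$.)

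The lemma then yields, for all $r\in(0,R)$,
$$
H(r,w) + r^{2\gamma} \le \frac{1}{c}\,\bigl(H(R,w)+R^{2\gamma}\bigr)\left(\frac{r}{R}\right)^{2\lambda} = C_R\, r^{2\lambda}.
$$
Since $H \ge 0$, this gives $r^{2\gamma} \le C_R r^{2\lambda}$, that is, $1 \le C_R r^{2(\lambda-\gamma)}$. If $\lambda > \gamma$, the right-hand side tends to $0$ as $r\downarrow 0$, which is a contradiction. Therefore $\lambda \le \gamma$.

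The only delicate point is verifying the sign hypothesis on $F$ in \autoref{lem:FRS_5.6}. In the case $f\equiv1$ this was automatic from $\langle w,\bH w\rangle_r \ge 0$, but here that positivity fails. It is replaced by the cubic bound of \autoref{lem:cubic_estimates} divided by the truncation term $r^{2\gamma}$, and it is precisely here that the restriction $\gamma < 5/2$ is used. Everything else is a direct reading of the growth bound.
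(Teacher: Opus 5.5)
Your proof is correct and follows the argument the paper intends: the corollary is presented as an immediate consequence of the lower bound in \eqref{eq:main_comparison_FRS_5.6}, which together with $H(r,w)\ge 0$ gives $r^{2\gamma}\le C_R\, r^{2\lambda}$ and hence $\lambda\le\gamma$. Your check of the hypotheses of \autoref{lem:FRS_5.6} via \autoref{lem:frequency_formula_monotonicity} and \autoref{lem:cubic_estimates} is also sound; the only thing to correct is your closing remark, since $\gamma<5/2$ is used not only for $F\ge -Cr^{5-2\gamma}$ but also in \autoref{lem:frequency_formula_monotonicity}, where it makes the error $Cr^{4-2\gamma}$ integrable at $0$ and thereby guarantees that $\lambda$ exists.
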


\smallskip
\begin{corollary}
    \label{cor:H_assymptotic_bhvr}
    Let $u: B_1 \times (-1, 1) \rightarrow [0, \infty)$ be a bounded solution of \eqref{eq:stefan_problem} and $(0, 0)$ a singular point. Set $w:= (u - p)\zeta$, let $\gamma \in \left(2, \frac{5}{2} \right)$ and $\lambda$ be given by \autoref{def:lambdas}. Assume $\lambda < \gamma$. Then 
    $$
    \lim_{r\downarrow0}\frac{r^{2\gamma}}{H(r, w)} = 0.
    $$
    Moreover, for any $K$ there exists $C_K > 1$ such that for $r$ small enough
    $$
    \frac{1}{C_K} \leq \frac{H(r, w)}{H(\theta r, w)} \leq C_K \qquad \theta \in [K^{-1}, K].
    $$
\end{corollary}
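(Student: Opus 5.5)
The plan is to use the two-sided comparison of \autoref{lem:FRS_5.6}, combined with the strict gap $\lambda<\gamma$. First we check that the hypotheses of \autoref{lem:FRS_5.6} hold for $\gamma\in(2,\frac52)$. The differential inequality is exactly \autoref{lem:frequency_formula_monotonicity}. The lower bound on $F(r)=2r^2\langle w,\bH w\rangle_r/(H(r,w)+r^{2\gamma})$ follows from \autoref{lem:cubic_estimates}: it gives $|F(r)|\le Cr^{5-2\gamma}$, so $\eps=5-2\gamma$ works. Since $\lambda=\lim_{r\downarrow0}\phi^\gamma(r,w)$ exists, for every $\delta>0$ there is $R$ with $\phi^\gamma(r,w)\le\lambda+\delta/4$ on $(0,R)$. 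Hence \eqref{eq:main_comparison_FRS_5.6} is available for every $\delta>0$.

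For the first claim, choose $\delta>0$ with $2\lambda+\delta<2\gamma$, which is possible because $\lambda<\gamma$. Fix the corresponding $R$. The upper bound in \eqref{eq:main_comparison_FRS_5.6} gives
$$
H(r,w)+r^{2\gamma}\ge C_\delta^{-1}\bigl(H(R,w)+R^{2\gamma}\bigr)R^{-2\lambda-\delta}\,r^{2\lambda+\delta}=c\,r^{2\lambda+\delta}.
$$
Therefore $H(r,w)\ge c\,r^{2\lambda+\delta}-r^{2\gamma}\ge \frac c2 r^{2\lambda+\delta}$ for $r$ small. It follows that $r^{2\gamma}/H(r,w)\le Cr^{2\gamma-2\lambda-\delta}\to0$.

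For the second claim, take $K\ge1$ and $\theta\in[K^{-1},K]$. Apply \eqref{eq:main_comparison_FRS_5.6} to the pair $(\min(r,\theta r),\max(r,\theta r))$ in place of $(r,R)$, assuming $Kr<R$. The lemma is proved for arbitrary $0<r<R'\le R$, since only $\phi^\gamma\le\lambda+\delta/4$ on the interval is used. This gives
$$
c\,K^{-2\lambda-\delta}\le \frac{H(r,w)+r^{2\gamma}}{H(\theta r,w)+(\theta r)^{2\gamma}}\le C_\delta K^{2\lambda+\delta}.
$$
By the first claim, $r^{2\gamma}=o(H(r,w))$ and $(\theta r)^{2\gamma}=o(H(\theta r,w))$ uniformly in $\theta\in[K^{-1},K]$. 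So for $r$ small the truncation terms change each of $H(r,w)$ and $H(\theta r,w)$ by at most a factor $2$, and the ratio $H(r,w)/H(\theta r,w)$ is bounded above and below by a constant $C_K$.

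The main point needing care is that in \autoref{lem:FRS_5.6} the outer radius $R$ is fixed, whereas here we need it at scale $\theta r$. We handle this by noting that its proof only integrates the differential inequality on $[r,R']$, so the constants do not depend on $R'$. The other possible gap is that \autoref{def:lambdas} presumes the limit $\lambda$ exists. This holds because $\phi^\gamma(r,w)+Cr^\eps/\eps$ is nondecreasing, which follows from \autoref{lem:frequency_formula_monotonicity}.
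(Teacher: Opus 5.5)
Your proposal is correct and follows the paper's proof. The first claim comes from the upper bound in \eqref{eq:main_comparison_FRS_5.6} with $2\lambda+\delta<2\gamma$, and the second from applying the same two-sided comparison at the scales $r$ and $\theta r$ once the truncation terms $r^{2\gamma}$, $(\theta r)^{2\gamma}$ are known to be negligible. Your two extra observations make precise what the paper compresses into a formal limit of ratios: the constants in \autoref{lem:FRS_5.6} do not depend on the outer radius, and the truncation changes each $H$ by at most a factor $2$, uniformly in $\theta\in[K^{-1},K]$.
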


\begin{proof}
    Taking $\delta > 0$ fixed, such that $2\lambda + \delta < 2\gamma$ and using \eqref{eq:main_comparison_FRS_5.6} with $R$ fixed we obtain
    $$
    H(r, w) + r^{2\gamma} \geq Cr^{2\lambda + \delta}.
    $$

    Dividing by $r^{2\gamma}$ and taking limit when $r\downarrow0$ we get
    $$
    \lim_{r\downarrow0}\frac{H(r, w)}{r^{2\gamma}} + 1 \geq C\lim_{r\downarrow0}r^{2\lambda + \delta - 2\gamma} = +\infty,
    $$
    which implies
    $$
    \lim_{r\downarrow0}\frac{r^{2\gamma}}{H(r, w)} = 0.
    $$

    This proves that $H(r, w)$ is the dominating term in the expression $H(r, w) + r^{2\gamma}$. Moreover, taking $K^{-1} \leq \theta \leq 1$ (the case $1 \leq \theta \leq K$  is analogous) we obtain
    $$
    \lim_{r\downarrow 0} \frac{H(r, w) + r^{2\gamma}}{H(\theta r, w) + (\theta r)^{2\gamma}} = \lim_{r\downarrow 0} \frac{H(r, w)}{H(\theta r, w)},
    $$
    and the last claim of the corollary follows from \eqref{eq:main_comparison_FRS_5.6}.
\end{proof}

\smallskip
\begin{corollary}
    \label{cor:H_bound_for_saturated_regim}
    Let $u: B_1 \times (-1, 1) \rightarrow [0, \infty)$ be a bounded solution of \eqref{eq:stefan_problem} and $(0, 0)$ a singular point. Set $w:= (u - p_2)\zeta$, let $\gamma \in \left(2, \frac{5}{2} \right)$ and $\lambda^*$ be given by \autoref{def:lambdas}. Assume $\lambda^* = \gamma$. Then, for $r$ small enough,
    $$
    H(r,w) \leq Cr^{2\lambda^*}.
    $$
\end{corollary}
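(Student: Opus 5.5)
The plan is to apply the lower bound in \eqref{eq:main_comparison_FRS_5.6} of \autoref{lem:FRS_5.6} with $p = p_2$ and $\lambda = \lambda^* = \gamma$, keeping $R$ fixed and small. The conclusion then follows by a single rearrangement; the real work is checking the hypotheses of \autoref{lem:FRS_5.6}.

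\textbf{Hypotheses.} Take $w = (u-p_2)\zeta$, which is $(u-p)\zeta$ with $p = p_2 \in \cP$.
\begin{itemize}
\item The differential inequality for $\phi^\gamma$ is exactly \autoref{lem:frequency_formula_monotonicity}, valid since $\gamma \in (2, 5/2)$.
\item For the second hypothesis, \autoref{lem:cubic_estimates} gives $|2r^2\langle w, \bH w\rangle_r| \le Cr^5$. Since $H(r,w) \ge 0$, the denominator is at least $r^{2\gamma}$, so
$$
\frac{2r^2\langle w,\bH w\rangle_r}{H(r,w)+r^{2\gamma}} \ge -Cr^{5-2\gamma},
$$
with $\eps = 5-2\gamma \in (0,1)$.
\item For the bound $\phi^\gamma \le \lambda^* + \delta/4$ on some $(0,R)$, fix any $\delta > 0$. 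Since $\lambda^*$ is defined as the limit $\lim_{r\downarrow0}\phi^\gamma(r,w)$ in \autoref{def:lambdas}, there is $R$ with $\phi^\gamma(r,w) \le \lambda^* + \delta/4$ for all $r \in (0,R)$. The existence of this limit follows from the almost-monotonicity in \autoref{lem:frequency_formula_monotonicity}: $\phi^\gamma(r,w) + Cr^\eps/\eps$ is nondecreasing, and $\phi^\gamma$ is bounded below by \autoref{lem:freq_direct_bound}.
\end{itemize}

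\textbf{Conclusion.} The lower bound in \eqref{eq:main_comparison_FRS_5.6} gives
$$
H(r,w) + r^{2\gamma} \le \frac{1}{c}\, R^{-2\lambda^*}\bigl(H(R,w)+R^{2\gamma}\bigr)\, r^{2\lambda^*}.
$$
With $R$ fixed, the right-hand side is $Cr^{2\lambda^*}$. Here $H(R,w)$ is bounded in terms of $\|u\|_{L^\infty}$ and $\|p_2\|$ via \autoref{lem:constant_simplification_bound}. Dropping the nonnegative term $r^{2\gamma}$ then gives $H(r,w) \le Cr^{2\lambda^*}$ for all $r < R$.

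\textbf{Main obstacle.} The only delicate point is that the lower bound in \autoref{lem:FRS_5.6} rests on $\phi^\gamma(r,w) \ge \lambda - Cr^\eps/\eps$ for small $r$. This comes from integrating the almost-monotonicity between $0$ and $r$, so it must be confirmed that $\lambda^*$ really is the limit from above in that sense. No other input beyond the earlier lemmas is needed.
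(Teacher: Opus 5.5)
Your proof is correct and follows the paper's intended argument: the paper lists this corollary as an immediate consequence of \autoref{lem:FRS_5.6}, namely the lower bound in \eqref{eq:main_comparison_FRS_5.6} with $p=p_2$ and $R$ fixed, after checking the hypotheses via \autoref{lem:frequency_formula_monotonicity} and \autoref{lem:cubic_estimates} with $\eps=5-2\gamma$ (the same check is done explicitly in Step 2 of \autoref{prop:lamb=gam}). The ``obstacle'' you flag is not a real one, since $\phi^\gamma(r,w)\ge\lambda^*-Cr^\eps/\eps$ follows by integrating the almost-monotonicity over $(s,r)$ and letting $s\downarrow 0$.
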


\subsection{Variants of the Lemmas with extra assumption:}
We have presented several lemmas that will be useful in the proof of our main theorem. Most of the previous lemmas are formulated assuming $\gamma \in \left(2, \frac{5}{2} \right)$. However there is a crucial intermediate result that needs a refined version of these lemmas with the extra growth hypothesis $\|w_r\|_{L^\infty(\gC_1)} \leq C r^{2+\alpha}$ where $\alpha > 0$. This will allow us to obtain similar results for $\gamma > 5/2$. 

\bigskip
The main idea is to argue as in \autoref{lem:cubic_estimates} but with the refined bound $\|w_r\|_{L^\infty(\gC_1)} \leq Cr^{2+\alpha}$ (instead of the bound with $Cr^2$ as right hand side). We obtain the following estimates
\begin{equation}
        \label{eq:cubic_estimates_v2}
        | \langle w, \bH w \rangle_r| \leq Cr^{3 + \alpha} \qquad \text{and} \qquad | \langle Zw, \bH w \rangle_r| \leq Cr^{3 + \alpha}.
    \end{equation}
Now we repeat all the previous arguments using with the new bound.
\smallskip
\begin{lemma}[Weiss'-type almost monotonicity formula]
\label{lem:weiss_type_formula_v2}
Let $u: B_1 \times (-1, 1) \rightarrow [0, \infty)$ be a bounded solution of \eqref{eq:stefan_problem} and $(0, 0)$ a singular point. Given $p \in \cP$, set $w:= (u - p)\zeta$, assume $\|w_r\|_{L^\infty(\gC_1)} \leq C r^{2+\alpha}$ for some $\alpha \in (0, 1)$. Take
$$
W(r, w) := \frac{1}{r^4}(D(r, w) - 2 H(r, w)).
$$
Then, there exists $r_0 > 0$ such that:
\begin{enumerate}
    \item For all $r \in (0, r_0)$
    $$
    \diff{ \ }{r}W(r, w) \geq -Cr^{\alpha},
    $$
    where $C$ depends only on $n$, $\|u(\cdot, 0) \|_{L^\infty(B_1)}$ and $\|p\|_{L^\infty(B_1)}$.
    \item $W(0^+, w) = 0$.
    
    \item For all $r \in (0, r_0)$
    $$
    D(r, w) - 2H(r, w) \geq -Cr^{5 + \alpha}
    $$
    where $C$ depends only on $n$, $\|u(\cdot, 0) \|_{L^\infty(B_1)}$ and $\|p\|_{L^\infty(B_1)}$.
\end{enumerate}
\end{lemma}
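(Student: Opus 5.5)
The plan is to repeat the proof of \autoref{lem:weiss_type_formula} verbatim, replacing the cubic bounds of \autoref{lem:cubic_estimates} by the refined bounds \eqref{eq:cubic_estimates_v2}. First we justify \eqref{eq:cubic_estimates_v2}. By \autoref{lem:exp_error}, $\langle w, \bH w\rangle_r$ equals $\langle \zeta(u-p), \zeta \bH(u-p)\rangle_r$ up to an error $Ce^{-1/(8r)^2}$. Inside $\{u>0\}$ we have $|\bH(u-p)| = |f(x)-f(0)| \le C|x|$. On $\{u=0\}$ the factor $u-p=-p$ multiplies $-f(0)$; but there $|w| = |p| \le Cr^{2+\alpha}$ on the relevant scale by hypothesis, so this contribution is also of order $r^{2+\alpha}$. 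Since $|(\bH(u-p))_r| \le C$ and $\|w_r\|_{L^\infty(\gC_1)} \le Cr^{2+\alpha}$, combining these with the $|x|$ factor, the Gaussian weight, and the scaling gives $|\langle w,\bH w\rangle_r| \le Cr^{3+\alpha}$. Exactly as in \autoref{lem:cubic_estimates}, we bound the Gaussian tails outside $B_1$ by the cutoff and exponential errors.

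For the $Z$ term we need $|(Zw)_r| \le Cr^{2+\alpha}$, or at least a weaker bound that still gives $r^{3+\alpha}$. This is the step I expect to be the main obstacle, since the hypothesis only controls $w$, not its derivatives. I would try interior parabolic estimates in two ways.

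\begin{itemize}
\item Apply \autoref{lem:constant_simplification_bound}-type $C^{1,1}_x\cap C^{0,1}_t$ estimates to the rescaled function $r^{-2}w_r$. Here $\bH(w_r) = r^2(f(rx)\chi_{\{u>0\}} - f(0))$.
\item Alternatively, use interpolation between $\|w_r\|_{L^\infty}\le Cr^{2+\alpha}$ and $\|D^2 w_r\|+\|\partial_t w_r\| \le Cr^2$ on a slightly larger cylinder.
\end{itemize}

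Either route should yield $|\nabla w_r| + |\partial_t w_r| \lesssim r^{2+\alpha'}$, possibly with a slightly worse exponent; we then rename $\alpha$, since only the existence of some $\alpha>0$ matters downstream. If that fails, I would integrate by parts in $\langle Zw, \bH w\rangle_r$ to move $Z$ onto $\bH w$ and $G$. However, this is delicate because $\bH w$ jumps across the free boundary.

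Granted \eqref{eq:cubic_estimates_v2}, part (a) is the same computation as \eqref{eq:weiss_formula}:
$$\diff{}{r}W(r,w) = \frac{2}{r^5}\bigl(\langle Zw-2w,Zw-2w\rangle_r + 4r^2\langle w,\bH w\rangle_r - 2r^2\langle Zw,\bH w\rangle_r\bigr) \ge -\frac{C r^{5+\alpha}}{r^5} = -Cr^\alpha .$$
Part (b) is unchanged from \autoref{lem:weiss_type_formula}, since it uses only the blow-up $r^{-2}w_r\to (p_2-p)\zeta$ and the homogeneity integration by parts. Alternatively, under the hypothesis $p=p_2$ necessarily, and $W(1,r^{-2}w_r)\to 0$ directly because $r^{-2}w_r\to0$ in $C^1_x$.

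Part (c) follows by integrating (a) from $0$ to $r$ and using (b): $W(r,w)\ge -\frac{C}{1+\alpha}r^{1+\alpha}$. Multiplying by $r^4$ gives $D(r,w)-2H(r,w)\ge -Cr^{5+\alpha}$.
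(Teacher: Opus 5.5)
Your three-step outline is the paper's proof: insert \eqref{eq:cubic_estimates_v2} into \eqref{eq:weiss_formula}, keep (b), then integrate. The paper itself only asserts \eqref{eq:cubic_estimates_v2} by analogy with \autoref{lem:cubic_estimates}. The gap is in your justification of \eqref{eq:cubic_estimates_v2}, which does not give the stated exponent.

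On $\{u=0\}$ one has $\bH(u-p)=-f(0)$, with no factor $|x|$. So your own reasoning there only yields $|\langle w,\bH w\rangle_r|\le Cr^{2+\alpha}$. In the top stratum the hypothesis confines the rescaled contact set to a slab of width $\sim r^{\alpha/2}$, which improves this only to $r^{2+3\alpha/2}$. The same holds for $\langle Zw,\bH w\rangle_r$, since $Zw=-2p$ there. Inserted into \eqref{eq:weiss_formula}, this gives just $\frac{d}{dr}W\ge -Cr^{\alpha-1}$. Imitating \autoref{lem:cubic_estimates} does not help, since its proof also replaces $\bH(u-p)$ by $f-f(0)$.

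For the $Z$ term, interpolation gives only $|\nabla w_r|\lesssim r^{2+\alpha/2}$, and ``renaming $\alpha$'' proves a weaker lemma. That loss is not harmless downstream. \autoref{prop:lamb=gam} uses $\eps_2=5+\alpha-2\gamma_2$ with $\alpha\approx\gamma_k-2$, so $\gamma_{k+1}\approx(3+\gamma_k)/2\to 3$. With $\alpha/2$ in place of $\alpha$ one gets $\gamma_{k+1}\approx 2+\gamma_k/4\to 8/3$, whereas \autoref{lem:cuad_cleaning_S3_n-1} needs $\gamma=3-\eps$.

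The missing idea is not to bound the two pairings separately. Since $Zp=2p$, on $\{\zeta=1\}$ we have $Zw-2w=Zu-2u$. This vanishes a.e.\ on $\{u=0\}$: there $\nabla u=0$, and $\partial_t u=0$ a.e.\ because $u\ge 0$ is Lipschitz in $t$. Set $g:=(f-f(0))\chi_{\{u>0\}}$, so that $|g|\le L|x|$. Then, up to exponentially small cutoff errors (\autoref{lem:exp_error}),
\[
4r^2\langle w,\bH w\rangle_r-2r^2\langle Zw,\bH w\rangle_r=-2r^2\langle Zw-2w,g\rangle_r .
\]
Completing the square gives
\[
\langle Zw-2w,Zw-2w\rangle_r-2r^2\langle Zw-2w,g\rangle_r\ \ge\ -r^4\langle g,g\rangle_r\ \ge\ -L^2r^4\int_{\R^n}|x|^2G(x,-r^2)\,dx=-2nL^2r^6 .
\]
Thus $\frac{d}{dr}W(r,w)\ge -Cr\ge -Cr^{\alpha}$. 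Integrating from $0$ and using (b) gives $D(r,w)-2H(r,w)\ge -Cr^{6}\ge -Cr^{5+\alpha}$. No pointwise control of $Zw$ is needed, and in fact (a) and (c) then hold without the growth hypothesis.
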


\begin{proof}\textit{(a)} Following the computations from \autoref{lem:weiss_type_formula} and using in this case \eqref{eq:cubic_estimates_v2} we get
    \begin{equation}
        \label{eq:weiss_formula_variant}
    \begin{aligned}
        \diff{ \ }{r}W(r, w) &=\frac{2}{r^5}(\langle Zw - 2w, Zw -2w \rangle_r + 4r^2 \langle w, \bH w \rangle_r -2r^2\langle Zw, \bH w\rangle_r)
        \geq -Cr^{\alpha}. 
    \end{aligned}
    \end{equation}
    
\textit{(b)} Is similar to \autoref{lem:weiss_type_formula} (b).
    
\textit{(c)} The statement is a direct consequence of (a) and (b), integrating \eqref{eq:weiss_formula_variant} from $0$ to $r$ we obtain
    $$
    W(r, w) = \frac{1}{r^4} \left(D(r, w) -2H(r, w) \right) \geq -Cr^{ 1+\alpha},
    $$
    and the claim follows.
\end{proof}

\smallskip
\begin{lemma}
    \label{lem:freq_direct_bound_v2}
    Let $u: B_1 \times (-1, 1) \rightarrow [0, \infty)$ be a bounded solution of \eqref{eq:stefan_problem} and $(0, 0)$ a singular point. Given $p \in \cP$ and $\gamma > 2$, set $w:= (u - p)$ and assume $\|w_r\|_{L^\infty(\gC_1)} \leq C r^{2+\alpha}$ for some $\alpha \in (0, 1)$. Then, for all $\gamma > 2$ there exists $r_0 > 0$ such that for all $r \in (0, r_0)$

    \begin{equation}
        \phi^{\gamma}(r, w) \geq 2-Cr^{5 + \alpha - 2\gamma},
    \end{equation}

    where $C$ depends only on $n$, $\|u(\cdot, 0) \|_{L^\infty(B_1)}$, $\eps$ and $\|p\|_{L^\infty(B_1)}$.
\end{lemma}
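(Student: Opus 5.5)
This is the same argument as in \autoref{lem:freq_direct_bound}, with \autoref{lem:weiss_type_formula_v2} (c) used in place of \autoref{lem:weiss_type_formula} (c). Throughout, $w$ stands for the cut-off function $(u-p)\zeta$, to which the Weiss-type formula applies. As the paper remarks, removing the cut-off only changes $D$ and $H$ by exponentially small amounts, and these can be absorbed into the polynomial error.

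First, since $\gamma>2$ we have $\gamma r^{2\gamma}\ge 2r^{2\gamma}$. Subtracting $2$ from the frequency therefore gives
$$
\phi^\gamma(r,w)-2=\frac{D(r,w)-2H(r,w)+(\gamma-2)r^{2\gamma}}{H(r,w)+r^{2\gamma}}\ \ge\ \frac{D(r,w)-2H(r,w)}{H(r,w)+r^{2\gamma}}.
$$

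Second, under the growth hypothesis $\|w_r\|_{L^\infty(\gC_1)}\le Cr^{2+\alpha}$, \autoref{lem:weiss_type_formula_v2} (c) gives $D(r,w)-2H(r,w)\ge -Cr^{5+\alpha}$ for all $r\in(0,r_0)$. That lemma rests on the refined estimates \eqref{eq:cubic_estimates_v2}, which are obtained exactly as in \autoref{lem:cubic_estimates}: the bound $|(u-p)_r|\le Cr^{2+\alpha}$ is combined with $|f(rx)-f(0)|\le Cr$. The same reasoning applies to the $Z$ term, since $Z(w_r)=(Zw)_r$.

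Third, the numerator is negative at worst by $Cr^{5+\alpha}$, while the denominator satisfies $H(r,w)+r^{2\gamma}\ge r^{2\gamma}$. Hence
$$
\phi^\gamma(r,w)-2\ \ge\ -\frac{Cr^{5+\alpha}}{r^{2\gamma}}=-Cr^{5+\alpha-2\gamma}.
$$
This is the claim. The exponent is positive, so the bound is meaningful, exactly when $\gamma<\frac{5+\alpha}{2}$. For larger $\gamma$ the inequality still holds, but it carries no information.

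The only step needing care is the second one. There one has to check that the $Z$-bound $|Z(u-p)_r|\le Cr^{2+\alpha}$ really follows from the hypothesis. Only the growth of $w_r$ itself is assumed, so a bound on $Zw$ must come from somewhere. I would derive it by interior parabolic estimates applied to the rescaled functions $r^{-2}w_r$. These solve $\bH(w_r)=r^2\big(f(rx)\chi_{\{u>0\}}-f(0)\big)_r$. That right-hand side is not small, because on the contact set it equals $-f(0)r^2$. So the estimate should instead follow the $C^{1,1}_x$ interpolation argument implicit in the paper's derivation of \eqref{eq:cubic_estimates_v2}.

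Alternatively, I would simply take the $Z$-estimate as part of \eqref{eq:cubic_estimates_v2}, as the paper already states it, and then the proof is the three-line computation above.
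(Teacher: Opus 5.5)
Your proposal is correct and follows the paper's proof essentially verbatim. You subtract $2$, discard the nonnegative term $(\gamma-2)r^{2\gamma}$, invoke \autoref{lem:weiss_type_formula_v2}~(c) to get $D-2H\ge -Cr^{5+\alpha}$, and bound the denominator below by $r^{2\gamma}$; you also silently fix the paper's typo $r^2$ for $r^{2\gamma}$ in the subtracted term. Your worry about deriving the $Z$-bound from the growth hypothesis alone concerns the supporting estimate \eqref{eq:cubic_estimates_v2}, which the paper likewise just asserts, so it does not affect the argument for this lemma.
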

\begin{proof}
    Using \autoref{lem:weiss_type_formula_v2}, since $\gamma > 2$, we get
    $$
    \phi^\gamma(r, w) - 2 = \frac{D(r, w) + \gamma r^{2\gamma} - 2\left(H(r, w) + r^2 \right)}{H(r, w) + r^{2\gamma}} \geq \frac{D(r, w) - 2H(r, w)}{H(r, w)+r^{2\gamma}} \geq -Cr^{5 + \alpha - 2\gamma}
    $$

    for $r$ small enough.
\end{proof}

\begin{remark}
    \label{rmk:extending_definition_lambdas}
    Under the hypotheses of the previous lemma, \autoref{def:lambdas} can be extended for any $\gamma > 2$.
\end{remark}

\smallskip
\begin{lemma}
    \label{lem:frequency_formula_monotonicity_v2}
    Let $u: B_1 \times (-1, 1) \rightarrow [0, \infty)$ be a bounded solution of \eqref{eq:stefan_problem} and $(0, 0)$ a singular point. Given $p \in \cP$, set $w:= (u - p)$ and assume $\|w_r\|_{L^\infty(\gC_1)} \leq c r^{2+\alpha}$ for some $\alpha \in (0, 1)$. Then there exists $r_0 > 0$ such that for all $\gamma > 2$ and for all $r \in (0, r_0)$
    \begin{equation}
        \label{eq:frequency_formula_monotonicity_v2}
        \diff{ }{r} \phi^{\gamma} (r, w)\geq \frac{2}{r} \left(\frac{2r^2 \langle w, \bH w\rangle_r}{H(r, w) + r^{2\gamma}}\right)^2 -Cr^{4 + \alpha - 2\gamma}
    \end{equation}

    for some $\eps \in (0, 1)$ where $C$ is a constant depending only on $n$, $\|u(\cdot, 0) \|_{L^\infty(B_1)}$, $\eps$, $\|p\|_{L^\infty(B_1)}$ and $c$.
\end{lemma}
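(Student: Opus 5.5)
This follows the proof of \autoref{lem:frequency_formula_monotonicity}, with the cubic estimates \eqref{eq:cubic_estimates} replaced by the refined ones \eqref{eq:cubic_estimates_v2}. The starting point is \autoref{lem:frequency_error_formula}. The Cauchy--Schwarz term $\langle Zw,Zw\rangle_r\langle w,w\rangle_r-\langle w,Zw\rangle_r^2$ is nonnegative, so we drop it. This gives
\[
\diff{}{r}\phi^\gamma(r,w)\ \geq\ \frac{2}{r}\left(\frac{2r^2\langle w,\bH w\rangle_r}{H(r,w)+r^{2\gamma}}\right)^2+\frac{2}{r}\,\frac{E^\gamma(r,w)}{\left(H(r,w)+r^{2\gamma}\right)^2}.
\]
The whole task is then to bound the last term from below by $-Cr^{4+\alpha-2\gamma}$.

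First rewrite the error term exactly as before:
\[
\frac{2}{r}\,\frac{E^\gamma(r,w)}{\left(H+r^{2\gamma}\right)^2}=\frac{4r\,\langle w,\bH w\rangle_r\,\phi^\gamma(r,w)-4r\,\langle Zw,\bH w\rangle_r}{H(r,w)+r^{2\gamma}}.
\]
Here $E^\gamma$ as written uses $D+r^{2\gamma}$; to get $\phi^\gamma$ one uses $D+\gamma r^{2\gamma}$, as in the earlier proof, and I would keep that convention. The cutoff errors from \autoref{lem:exp_error} are exponentially small and get absorbed.

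By \eqref{eq:cubic_estimates_v2}, both $|\langle w,\bH w\rangle_r|$ and $|\langle Zw,\bH w\rangle_r|$ are at most $Cr^{3+\alpha}$. These follow as in \autoref{lem:cubic_estimates}: $|(\bH w)_r|\le Cr$ since $f$ is Lipschitz, and $|w_r|,|Zw_r|\le Cr^{2+\alpha}$. The bound on $Zw_r$ needs a short justification from the growth hypothesis, for instance via interior estimates for $w$, or by using $|Zw_r|\le Cr^2$ together with the $w$-bound. The denominator is at least $r^{2\gamma}$. It remains to control $\phi^\gamma$ in the numerator. Since $\langle w,\bH w\rangle_r$ can have either sign, we need an upper bound on $|\phi^\gamma|$.

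This upper bound on $\phi^\gamma$ is the main obstacle. The lower bound $\phi^\gamma\ge 2-Cr^{5+\alpha-2\gamma}$ from \autoref{lem:freq_direct_bound_v2} is not enough. I would proceed as follows.
\begin{enumerate}
\item Write $\phi^\gamma\le \max\{D/H,\gamma\}$, which holds since $\phi^\gamma$ is a mediant of $D/H$ and $\gamma$.
\item Use \autoref{lem:formulas_for_H&D}(2): $D=\langle w,Zw\rangle_r-2r^2\langle w,\bH w\rangle_r\le \sqrt{H}\,\langle Zw,Zw\rangle_r^{1/2}+Cr^{5+\alpha}$.
\end{enumerate}
The point is to avoid bounding the product $r\langle w,\bH w\rangle_r\,\phi^\gamma$ crudely. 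Instead, use
\[
r\,|\langle w,\bH w\rangle_r|\,\frac{D+\gamma r^{2\gamma}}{(H+r^{2\gamma})^2}\ \le\ r\cdot Cr^{3+\alpha}\cdot\frac{C\sqrt{H}\,r^{2}+Cr^{2\gamma}}{(H+r^{2\gamma})^2}.
\]
Here $\langle Zw,Zw\rangle_r^{1/2}\le Cr^2$ by the $C^{1,1}_x\cap C^{0,1}_t$ bound. This would only give a power of $r$ worse than $4+\alpha-2\gamma$. So the sharper route is to also take $\langle Zw,Zw\rangle_r\le Cr^{4+2\alpha}$ when the refined growth is available. That gives $D\lesssim\sqrt H\, r^{2+\alpha}+r^{5+\alpha}$. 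Combined with $H\le Cr^{4+2\alpha}$, this yields a bounded $\phi^\gamma$ up to the harmless factor $r^{2+\alpha}/\sqrt{H+r^{2\gamma}}$, which is handled by splitting into the cases $H\ge r^{2\gamma}$ and $H<r^{2\gamma}$.

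Collecting terms gives a lower bound $-Cr\cdot r^{3+\alpha}/r^{2\gamma}=-Cr^{4+\alpha-2\gamma}$ for both pieces, which is the claimed inequality. The constant depends on $c$ through the growth hypothesis and on $\|u(\cdot,0)\|_{L^\infty(B_1)}$ via \autoref{lem:constant_simplification_bound}.
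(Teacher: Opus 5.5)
You have the paper's skeleton: drop the Cauchy--Schwarz term in \autoref{lem:frequency_error_formula}, rewrite the $E^\gamma$-term through $\phi^\gamma$, and use \eqref{eq:cubic_estimates_v2} together with $H(r,w)+r^{2\gamma}\ge r^{2\gamma}$. The paper's proof is exactly this one line, and it silently treats $\phi^\gamma$ as bounded. You are right that this needs an argument, but the argument you give does not work.

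\textbf{Why your bound on $\phi^\gamma$ fails.} The estimate $D\le \sqrt{H}\,\langle Zw,Zw\rangle_r^{1/2}+Cr^{5+\alpha}$ only gives
\[
\phi^\gamma(r,w)\le \gamma+\frac{C r^{2+\alpha}\sqrt{H}}{H+r^{2\gamma}}+\frac{Cr^{5+\alpha}}{H+r^{2\gamma}}.
\]
The inequality $\sqrt{H}/(H+r^{2\gamma})\le \tfrac12 r^{-\gamma}$ is sharp, with equality at $H=r^{2\gamma}$. So your ``harmless factor'' $r^{2+\alpha}/\sqrt{H+r^{2\gamma}}$ is of size $r^{2+\alpha-\gamma}$ whenever $H$ is comparable to $r^{2\gamma}$. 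That situation occurs in both cases of your splitting ($H<r^{2\gamma}$ and $H\ge r^{2\gamma}$), so the splitting gains nothing. For $\gamma>2+\alpha$ this factor diverges. The error you actually get is then of order $r^{4+\alpha-2\gamma}\cdot r^{2+\alpha-\gamma}$, not $r^{4+\alpha-2\gamma}$.

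This is precisely the range that matters. In Step 2 of \autoref{prop:lamb=gam} the lemma is used with $\alpha=\gamma_1-\delta-2$ and $\gamma_2$ up to $(5+\alpha)/2>2+\alpha$. It is used in the saturated regime, where one expects $H\lesssim r^{2\gamma_2}$, which is exactly your bad case. Separately, $\langle Zw,Zw\rangle_r\le Cr^{4+2\alpha}$ does not follow from the sup bound on $w$ alone; it would need control of $\partial_t w$ and $\partial_{x_n} w$ at that rate.

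\textbf{What is missing: a Gronwall argument.} You do not need a direct bound on $\phi^\gamma$. Since $D\ge 0$, you have $\phi^\gamma\ge 0$, and the crude estimates already give
\[
\diff{}{r}\phi^\gamma(r,w)\ \ge\ \frac{2}{r}\left(\frac{2r^2\langle w,\bH w\rangle_r}{H(r,w)+r^{2\gamma}}\right)^2-Cr^{4+\alpha-2\gamma}\bigl(1+\phi^\gamma(r,w)\bigr).
\]
Hence $\diff{}{r}\log\bigl(1+\phi^\gamma(r,w)\bigr)\ge -Cr^{4+\alpha-2\gamma}$. For $\gamma<(5+\alpha)/2$ the right-hand side is integrable at $0$. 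Integrating from $r$ to a fixed $r_0$ gives $\sup_{(0,r_0)}\phi^\gamma\le \bigl(1+\phi^\gamma(r_0,w)\bigr)e^{C'}$, and substituting this back yields the claimed $-Cr^{4+\alpha-2\gamma}$.

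This also shows the statement is only meaningful for $\gamma<(5+\alpha)/2$, which is the range in which it is applied. Alternatively, wherever the inequality is fed into \autoref{lem:FRS_5.6}, boundedness of $\phi^\gamma$ is already part of that lemma's hypotheses.
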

\begin{proof}
    Repeating the argument from \autoref{lem:frequency_formula_monotonicity} with the estimates \eqref{eq:cubic_estimates_v2} we get
    \begin{align*}
        \frac{E^\gamma(r, w)}{r\left(H(r, w) + r^{2\gamma}\right)} &=\frac{2r \langle w, \bH w \rangle_r \phi(r, w) - 2r\langle Zw, \bH w \rangle_r}{H(r, w) + r^{2\gamma}} \geq -Cr^{4 + \alpha - 2\gamma}
    \end{align*}
    as wanted.
\end{proof}

\smallskip
\section{The 2nd Blow-up in the Top Stratum}
\label{sec: 2nd_blowup_top_stratum}
In this section we will analyze the structure of the blow-ups of $u - p_2$ for singular points in $\Sigma_{n-1}$. The most important result of this section is that for any value of the parameter $\gamma \in (2, 3)$ we have $\lambda^*=\gamma$.

We start proving some estimates that will allow us to control different quantities by $\|w_r\|_{L^2}$ up to a cubic error term.
\smallskip
\begin{lemma}
    \label{lem:L2_controls}
    Let $u: B_1 \times (-1, 1) \rightarrow [0, \infty)$ be a bounded solution of \eqref{eq:stefan_problem} and $(0, 0)$ a singular point. Set $w:= (u - p_2)$. Then there exists $r_0 > 0$ such that for all $r \in (0, r_0)$,
    $$
    \|w_r\|_{L^\infty(\gC_1)} + \|\nabla w_r\|_{L^2(\gC_1)} +\| \partial_t w_r\|_{L^2(\gC_1)} \leq C(\|w_r\|_{L^2(\gC_2)} + r^3).
    $$
\end{lemma}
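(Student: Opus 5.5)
Our plan is to split $w_r=(u-p_2)_r$ according to the sign structure of $\bH w_r$ and use the half-Harnack inequality (\autoref{lem:half-harnack}) for the $L^\infty$ bound, followed by energy (Caccioppoli-type) estimates for the gradient and time derivative. First we compute
$$
\bH w_r = r^2\big(f(rx)\chi_{\{u_r>0\}} - f(0)\big) = r^2\big(f(rx)-f(0)\big)\chi_{\{u_r>0\}} - r^2 f(0)\chi_{\{u_r=0\}} .
$$
The first term is bounded by $Cr^3$ because $f$ is Lipschitz. The second term is nonpositive, and it is supported where $u_r=0$, i.e.\ where $w_r=-(p_2)_r\le 0$. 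So $\bH w_r \ge -Cr^3$ everywhere. More precisely, on the set $\{w_r>0\}$ we have $u_r>0$, hence $\bH w_r\ge -Cr^3$ there with no contact contribution at all.

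\textbf{Upper bound for $w_r$.} The function $v:=w_r + Cr^3 t$ (or, to stay positive-oriented, $w_r - Cr^3(t+4)$, up to a harmless choice of sign convention) satisfies $\bH v\ge 0$ in $\gC_2$. It is subcaloric, so \autoref{lem:half-harnack} with Hölder gives
$$
\sup_{\gC_1} w_r \le C\big(\|w_r\|_{L^2(\gC_2)}+r^3\big).
$$
\textbf{Lower bound for $w_r$.} We have $w_r\ge -(p_2)_r = -r^2p_2$, which is too weak, so this needs a separate argument. We plan to use that $-w_r\le (p_2)_r$ and that $\{u_r=0\}$ is where $w_r$ is most negative. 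On $\{w_r<0\}\cap\{u_r>0\}$ we again have $|\bH w_r|\le Cr^3$. On the contact set, $w_r=-r^2p_2$ is an explicit smooth function with $\bH(-r^2 p_2)= -r^2f(0)$.

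Our first attempt is the following. Consider $w_r^-=\max(-w_r,0)$. We will show that $\bH(-w_r)\ge -Cr^3$ in the viscosity/distributional sense on $\{w_r<0\}$, which is enough for a Kato-type inequality $\bH (w_r^- ) \ge -Cr^3$. On $\{u_r>0\}$ this holds directly. At contact points, $-w_r=r^2p_2$, and $\bH(-w_r)$ has the distributional form $r^2 f(0)\chi_{\{u_r=0\}} - r^2(f(rx)-f(0))\chi_{\{u_r>0\}}\ge -Cr^3$, which also holds. Hence $\bH w_r = r^2(f(rx)-f(0))\chi - r^2f(0)\chi_{\{u_r=0\}}\le Cr^3$ as well. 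Combining the two inequalities, $|\bH w_r|$ is not small, but the \emph{one-sided} bounds $\bH w_r\ge -Cr^3$ and $\bH(-w_r)\ge -Cr^3$ both hold. Then $|w_r|$ is subcaloric up to $Cr^3$, and applying half-Harnack to $w_r^+$ and $w_r^-$ separately gives the $L^\infty$ bound. We expect the main difficulty to be this second one-sided inequality, i.e.\ checking the sign of the contact term correctly. The key sign is the following: the contact term makes $\bH w_r$ negative, which is good for $-w_r$. For $w_r$ itself, the needed estimate holds only on $\{w_r>0\}\subset\{u_r>0\}$, which suffices for $w_r^+$ by Kato's inequality.

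\textbf{Derivative bounds.} For $\nabla w_r$ we will multiply $\bH w_r$ by $w_r\eta^2$ with a cutoff $\eta$ supported in $\gC_2$ and integrate. The right-hand side $\int |\bH w_r||w_r|\eta^2$ contains the term $r^2 f(0)\chi_{\{u_r=0\}}w_r$. On the contact set $w_r=-r^2p_2\le 0$, so $-\bH w_r\, w_r\le 0$ there and the term has a favorable sign. The remaining part is bounded by $Cr^3\|w_r\|_{L^1}$. This gives $\|\nabla w_r\|_{L^2(\gC_1)}^2\le C(\|w_r\|_{L^2(\gC_2)}^2+r^6)$. For $\partial_t w_r$ we test with $\partial_t w_r\,\eta^2$. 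Here we use $\partial_t u\ge 0$, and the fact that $\partial_t u_r=0$ on the contact set, where the bad term lives, so that contribution vanishes. The rest is controlled by $\nabla w_r$, the $r^3$ term, and the $L^\infty$ bound already obtained. Finally we shrink the cylinders (from $\gC_2$ to $\gC_{3/2}$ to $\gC_1$) and absorb constants, which is routine.
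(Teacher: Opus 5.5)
Your plan is correct in substance, but it takes a different route from the paper. The paper does not estimate $w_r$ by hand. It normalizes $u_1=u_r/(r^2f(0))$ and $u_2=p_2/f(0)$, and notes that $\bH u_1=\chi_{\{u_1>0\}}\,f(rx)/f(0)$ with $|f(rx)/f(0)-1|\le Lr/f(0)=:\bar\eps<1/100$ for small $r$. It also notes that $u_2$ solves the normalized problem, because $\bH u_2=1$ and $\{p_2=0\}$ is Lebesgue-null. Then \autoref{lem:FRS_6.1} gives all three bounds at once after multiplying back by $r^2f(0)$. You instead reprove that lemma in this special case. You use half-Harnack for $w_r^\pm$, and Caccioppoli estimates obtained by testing with $w_r\eta^2$ and $\partial_t w_r\eta^2$. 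The contact term $-r^2f(0)\chi_{\{u_r=0\}}$ has a favorable sign in the first identity, since $w_r=-r^2p_2\le 0$ there. It drops out of the second, since $\partial_t u_r=0$ a.e.\ on $\{u_r=0\}$. The paper's route is a two-line reduction to a known estimate. Yours is self-contained, makes explicit which one-sided structure is actually used, and does not need the smallness of $\bar\eps$.

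You do need to fix the sign bookkeeping in your first two paragraphs. The term $-r^2f(0)\chi_{\{u_r=0\}}$ is nonpositive and of order $r^2$. So the inequality that holds everywhere is $\bH w_r\le Cr^3$, not $\bH w_r\ge -Cr^3$. Consequently $w_r-Cr^3(t+4)$ is not subcaloric on the contact set, and your later claim that both one-sided bounds hold globally is false. Also, since $\bH t=-1$, the correction $w_r+Cr^3t$ has the wrong sign.

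The correct version is the one in your last sentence, with the roles reversed from how you first described them. The lower bound is the easy side: $\bH(-w_r)\ge -Cr^3$ everywhere, so $w_r^-+\frac{Cr^3}{2n}|x|^2$ is subcaloric. The upper bound is the one that needs localization, because $\bH w_r\ge -Cr^3$ holds only on $\{w_r>0\}\subset\{u_r>0\}$. Kato's inequality $\bH w_r^+\ge\chi_{\{w_r>0\}}\bH w_r$ then gives $\bH w_r^+\ge -Cr^3$; prove it via convex approximations of $s\mapsto s^+$, which is legitimate since $w_r\in C^{1,1}_x\cap C^{0,1}_t$.

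With that correction, half-Harnack and H\"older on $\gC_2$ give the $L^\infty$ bound, and your energy estimates go through as described. Two small remarks on the $\partial_t w_r\eta^2$ test: the integration by parts needs a routine mollification, since $\nabla\partial_t w_r$ need not exist, and $\partial_t u\ge 0$ is not actually used there.
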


To prove this we will need the following auxiliary lemma from \cite{FRS24},
\smallskip
\begin{lemma}[{\cite[Lemma 6.1]{FRS24}}]
    \label{lem:FRS_6.1}
    Let $u_i : \gC_2 \to \R$, $i = 1, 2$ solve
    $$
    \begin{cases}
        \bH u_i = \chi_{\{u_i > 0 \}}(1 + \eps_i(x, t)) & \text{in} \ \ \gC_2 \\
        u_i \geq 0 \\
        \partial_t u_i \geq 0,
    \end{cases}
    $$
    with $|\eps_i(x, t)| \leq \bar{\eps} < \frac{1}{100}$, and set $w:= u_1 - u_2$. Then
    \begin{equation}
        \label{eq:L2_controls_Linf}
        \|w\|_{L^\infty(\gC_1)} \leq C(\|w\|_{L^2(\gC_2)} + \bar{\eps})
    \end{equation}
    and
    \begin{equation}
        \label{eq:L2_controls_int}
       \left(\int_{\gC_1} |\nabla w|^2  +|w \bH w| + |\partial_tw|^2\right)^{1/2} \leq C(\|w\|_{L^2(\gC_2)} + \bar{\eps})
    \end{equation}
    where $C$ is a dimensional constant.
\end{lemma}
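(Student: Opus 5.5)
The plan is to exploit the sign structure of $\bH w$ rather than any smallness of the free boundaries. Write
$$\bH w=\chi_{\{u_1>0\}}(1+\eps_1)-\chi_{\{u_2>0\}}(1+\eps_2)$$
and split $\gC_2$ into four regions:
\begin{itemize}
\item on $\{u_1>0,\,u_2=0\}$ we have $w=u_1>0$ and $\bH w=1+\eps_1\ge 1-\bar\eps>0$;
\item on $\{u_2>0,\,u_1=0\}$ we have $w<0$ and $\bH w\le -(1-\bar\eps)<0$;
\item on $\{u_1>0,\,u_2>0\}$ we have $|\bH w|=|\eps_1-\eps_2|\le 2\bar\eps$;
\item on $\{u_1=u_2=0\}$ both $w$ and $\bH w$ vanish.
\end{itemize}
Two consequences drive everything. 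First, $w\,\bH w\ge -2\bar\eps|w|$ pointwise. Second, on $\{w>0\}$ we have $\bH w\ge -2\bar\eps$, and symmetrically $\bH w\le 2\bar\eps$ on $\{w<0\}$. Throughout I use that $u_i\in C^{1,1}_x\cap C^{0,1}_t$, so $\bH w\in L^\infty$, and that $\nabla u_i=\partial_t u_i=0$ a.e. on $\{u_i=0\}$.

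\emph{$L^\infty$ bound.} By a parabolic Kato inequality, justified by applying $\bH$ to $\psi_\delta(w)$ with $\psi_\delta$ a smooth convex approximation of $s\mapsto s_+$ and letting $\delta\downarrow0$, the previous paragraph gives $\bH(w_+)\ge -2\bar\eps$ in the distributional sense. Since $\bH(-t)=1$, the function $v:=w_+-2\bar\eps\, t$ is subcaloric on $\gC_2$. I would apply the half-Harnack inequality (\autoref{lem:half-harnack}) to $v$, rescaled and translated to cylinders of size comparable to $1$ covering $\gC_1$ and contained in $\gC_2$. The exponent there may be taken $\le 2$, and Hölder's inequality on a bounded domain then gives
$$\sup_{\gC_1} w_+\le C\bigl(\|w\|_{L^2(\gC_2)}+\bar\eps\bigr).$$
The same argument applied to $-w$, with the roles of $u_1,u_2$ exchanged, bounds $w_-$ and yields \eqref{eq:L2_controls_Linf}.

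\emph{Gradient and $|w\,\bH w|$.} Let $\eta$ be a cutoff equal to $1$ on $\gC_1$ and vanishing near the parabolic boundary of $\gC_2$. Testing $\bH w$ against $\eta^2 w$ and integrating by parts in space and time, the top-time boundary term has a favorable sign, and we get
$$\int\eta^2|\nabla w|^2+\int \eta^2 w\,\bH w\le C\int_{\gC_2}w^2 .$$
To pass to the absolute value, write $|w\,\bH w|= w\,\bH w+2(w\,\bH w)_-\le w\,\bH w+4\bar\eps|w|$. Since also $\int\eta^2 w\,\bH w\ge -2\bar\eps\|w\|_{L^1}$, the gradient integral and $\int\eta^2|w\,\bH w|$ are both bounded by $C\|w\|_{L^2(\gC_2)}^2+C\bar\eps\|w\|_{L^1(\gC_2)}\le C(\|w\|_{L^2(\gC_2)}+\bar\eps)^2$.

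\emph{Time derivative.} This is the step I expect to be the main obstacle, because it must use the monotonicity $\partial_t u_i\ge0$. Test with $\eta^2\partial_t w$:
$$\int\eta^2|\partial_t w|^2=\int\eta^2\partial_t w\,\Delta w-\int\eta^2\partial_t w\,\bH w .$$
For the first term, integrate by parts in space. This produces $-\tfrac12\int\eta^2\partial_t|\nabla w|^2$, which after integrating in time has a nonpositive top-time boundary term plus $\int|\nabla w|^2|\partial_t\eta^2|$. It also produces a cross term $\int \eta\,\partial_t w\,\nabla\eta\cdot\nabla w$, which is absorbed by Cauchy--Schwarz into half of the left-hand side plus $C\int|\nabla w|^2$. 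All of these are controlled by the previous step applied on a slightly larger cylinder.

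For the second term, split $\bH w=(\chi_1-\chi_2)+(\chi_1\eps_1-\chi_2\eps_2)$ with $\chi_i:=\chi_{\{u_i>0\}}$. The key claim is $\partial_t w\,(\chi_1-\chi_2)\ge0$ a.e. On $\{u_1>0,u_2=0\}$ it equals $\partial_t u_1\ge0$, since $\partial_t u_2=0$ a.e. there. On $\{u_2>0,u_1=0\}$ it equals $\partial_t u_2\ge0$. On the other two regions it vanishes. Hence that contribution enters with a favorable sign. The $\eps$-part is bounded by $2\bar\eps\int\eta^2|\partial_t w|\le\frac14\int\eta^2|\partial_t w|^2+C\bar\eps^2$.

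Absorbing terms then gives \eqref{eq:L2_controls_int}. The remaining care is justifying these integrations by parts with only $C^{1,1}_x\cap C^{0,1}_t$ regularity, e.g.\ via mollification in time.
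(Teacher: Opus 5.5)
Your proof is correct and is essentially the standard argument for this estimate; the paper gives no proof of its own and only cites \cite[Lemma 6.1]{FRS24}. The ingredients are the sign relations between $\chi_{\{u_1>0\}}-\chi_{\{u_2>0\}}$ and both $w$ and $\partial_t w$ (the latter is exactly where $\partial_t u_i\ge 0$ enters), which give $\bH w_{\pm}\ge -2\bar\eps$, $w\,\bH w\ge -2\bar\eps|w|$ and $\partial_t w\,\bH w\ge -2\bar\eps|\partial_t w|$; the half-Harnack inequality and the energy estimates from testing with $\eta^2 w$ and $\eta^2\partial_t w$ then yield both bounds (for merely bounded $\eps_i$ one should assume only $W^{2,1}_p$ regularity rather than $C^{1,1}_x\cap C^{0,1}_t$, but you use regularity only qualitatively, so nothing changes).
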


\begin{proof}[Proof of \autoref{lem:L2_controls}]
We will use \autoref{lem:FRS_6.1} setting $u_1 = \frac{u_r(x, t)}{r^2 f(0)}$ and $u_2 = \frac{p_2(rx)}{r^2 f(0)}$. We have to check that $u_1$ fulfills the hypotheses of the lemma. Note that
$$
\bH u_1= \chi_{\{u_1 > 0 \}} \left( \frac{f(rx)}{f(0)}\right)
$$

and also, since $f$ is a Lipschitz function with Lipschtiz constant $L$, we have $\frac{f(rx)}{f(0)} \leq \frac{L}{f(0)}r + 1$ which fulfills the hypothesis with $\bar{\eps} = Lr$ provided $r_0 < \frac{f(0)}{100L}$. The case of $u_2$ is clear because $\bH u_2 = 1$, hence it solves the PDE in weak sense since $\{p_2 = 0 \}$ has zero measure in $\R^n$. 

Therefore, by \eqref{eq:L2_controls_Linf} we obtain
$$
\frac{\|(u-p_2)_r\|_{L^\infty(\gC_1)}}{r^2f(0)} \leq C \left( \frac{\|(u - p_2)_r\|_{L^2(\gC_2)}}{r^2f(0)} + \frac{L}{f(0)}r\right)
$$
and the bound for $\|w_r\|_{L^\infty(\gC_1)}$ follows. The $L^2$ bounds for $\|\nabla w_r\|_{L^2(\gC_1)}$ and $\|\partial_t w_r\|_{L^2(\gC_1)}$ are obtained in a similar way from \eqref{eq:L2_controls_int}.
\end{proof}




\smallskip
\begin{lemma}
    \label{lem:L_infty_bounds}
    Let $u: B_1 \times (-1, 1) \rightarrow [0, \infty)$ be a bounded solution of \eqref{eq:stefan_problem}, $(0, 0) \in \Sigma_{n-1}$. Set $w := (u - p_2)$, and without loss of generality assume $p_2 = \frac{f(0)}{2} x_n^2$. Then there exists $r_0 > 0$ and $\alpha > 0$ such that for all $r \in (0, r_0)$ it holds
    $$
    \|\partial_t w_r\|_{L^\infty(\gC_1)} +  \|\nabla' w_r\|_{L^\infty(\gC_1)} + \| w_r \|_{C^{0, \alpha}_x(\gC_1)} \leq C\left( \|w_r\|_{L^2(\gC_2)} + r^3\right),  
    $$
    where $C$ is a constant depending only on $n$ and $\|u(\cdot, 0)\|_{L^\infty(B_1)}$. Here $\nabla'$ denotes the gradient w.r.t. the first $n-1$ variables and $\| \cdot \|_{C^{0, \alpha}_x}$ denotes the H\"older norm only w.r.t. the spatial variables. 
\end{lemma}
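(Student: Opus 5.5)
The plan is to differentiate the equation in the directions in which $p_2$ is constant and to use that $\partial_t u\ge 0$. This gives one-sided (sub/supercaloric) information on $\partial_t w$ and $\partial_{x_i}w$, $i\le n-1$. We then combine it with the $L^2$ control of \autoref{lem:L2_controls} and the half-Harnack inequality (\autoref{lem:half-harnack}) to get $L^\infty$ bounds. Finally, \autoref{prop:cc_argument} provides the Hölder bound in the remaining direction $x_n$.

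\textbf{Time derivative.} Since $p_2$ is time independent, $\partial_t w=\partial_t u\ge 0$. Formally $\bH(\partial_t u)=f\,\partial_t\chi_{\{u>0\}}$, which is a nonnegative measure supported on the free boundary, since $\{u>0\}$ expands in time. More precisely, I would show that $\partial_t u$ is subcaloric in $\gC_2$ after rescaling, approximating if needed.

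Applying \autoref{lem:half-harnack} to $(\partial_t w)_r=r^{2}(\partial_t w)(rx,r^2t)\ge0$ bounds $\sup_{\gC_{1}}\partial_t w_r$ by $C\|\partial_t w_r\|_{L^{\eps}}$. This is in turn at most $C\|\partial_t w_r\|_{L^2}$ on a slightly larger cylinder. By \autoref{lem:L2_controls}, applied with a covering or rescaling from $\gC_2$ to $\gC_{3/2}$, the latter is at most $C(\|w_r\|_{L^2(\gC_2)}+r^3)$.

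\textbf{Tangential derivatives.} For $i\le n-1$ we have $\partial_i p_2=0$, hence $\partial_i w=\partial_i u$. Here I would follow the standard argument for $\pm\partial_e u$: in $\{u>0\}$,
\[
\bH(\partial_i u)=\partial_i f .
\]
Moreover $(\partial_i u)_\pm$ vanish on $\{u=0\}$. Hence $(\pm\partial_i w_r)_+$ satisfy $\bH(\pm\partial_i w_r)_+\ge -Cr^3$ in the rescaled variables, the factor $r^3$ coming from $r^2\cdot r\,\mathrm{Lip}(f)$ for a derivative of order one.

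Adding $Cr^3\,t$ (or $Cr^3|x|^2$) yields a genuinely subcaloric function. The half-Harnack inequality then gives
\[
\sup|\partial_i w_r|\le C\bigl(\|\nabla w_r\|_{L^2}+r^3\bigr),
\]
and \autoref{lem:L2_controls} converts this into the claimed right-hand side.

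\textbf{Hölder bound in $x_n$.} For each fixed $t$, $\Delta w_r=\partial_t w_r+\bH w_r$. Since $\bH w_r=r^2\bigl(f(rx)\chi_{\{u>0\}}-f(0)\bigr)$ is not signed, I would instead use the function
\[
v:=w_r+\tfrac12 M x_n^2,
\qquad M:=\|\partial_t w_r\|_\infty+Cr^3 .
\]
We have $\Delta v=\partial_t w_r+M-r^2\bigl(f(0)-f(rx)\chi_{\{u>0\}}\bigr)$, which is not directly signed because on $\{u=0\}$ the term $-r^2f(0)$ appears. Better: $\Delta w_r=\Delta u_r-r^2f(0)$, and $\Delta u_r\ge0$ as a measure, since $\Delta u=\partial_t u+f\chi\ge0$. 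So I would apply \autoref{prop:cc_argument} to $v=w_r+\frac{r^2f(0)}2x_n^2=u_r-\frac{r^2f(0)}2|x'|^2$-type corrections. That is: take $v=u_r$ minus a quadratic in $x'$ only, so that $\Delta v\ge0$ while the $\nabla'$ error is controlled.

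Concretely, $v:=w_r+\frac{r^2 f(0)}{2}x_n^2=u_r$ is subharmonic. However $\nabla' u_r=\nabla' w_r$, $\|u_r\|_\infty$ is of size $r^2$, and the $x_n$ part then carries $r^2$, which is too big. The main obstacle is thus subtracting $p_2$ while keeping subharmonicity. I would handle it by taking $v=w_r+\frac{K}{2}|x'|^2$ with
\[
K=r^2 f(0)/(n-1),
\]
which is subharmonic and has $\nabla' v=\nabla'w_r+O(r^2)|x'|$. This still spoils the bound, so I would instead compare with the Hölder seminorm of the difference: the quadratic $\frac K2|x'|^2$ is independent of $x_n$, so it does not affect oscillation in $x_n$. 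Applying \autoref{prop:cc_argument} on slices, the $C^{0,\alpha}$ seminorm in $x_n$ of $w_r$ equals that of $v$ and is bounded by $\|\nabla'v\|+\|v\|$. The spurious $O(r^2)$ from the $|x'|^2$ term must be avoided, e.g. by localizing the argument of \autoref{lem:cc_argument} (whose blow-up only sees $\nabla'v$ at small scales, where $K|x'|\rho\ll$) — this is the delicate point I expect to need care. Combining the three bounds gives the lemma.
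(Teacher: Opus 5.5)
Your bounds on $\partial_t w_r$ and $\nabla' w_r$ work and follow the paper's scheme: subcaloricity of $\partial_t w_r$ and of $|\partial_i w_r|+\frac{C}{2n}r^3|x|^2$, then \autoref{lem:half-harnack}. The paper controls the resulting $L^\eps$ norm through Calder\'on--Zygmund weak-$L^1$ bounds. You instead use the $L^2$ bounds of \autoref{lem:L2_controls} on a slightly larger cylinder, which is legitimate and simpler. One sign slip: $\bH(Cr^3t)=-Cr^3$, so you need $-Cr^3t$ or the $|x|^2$ correction.

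The genuine gap is the H\"older step. You leave it open, and the fix you sketch cannot work. Since $\Delta_x w_r=-r^2f(0)$ on $\{u=0\}$, making $w_r$ plus a quadratic subharmonic forces a correction of size $K\sim r^2f(0)$. Localizing \autoref{lem:cc_argument} does not remove it: in the blow-up the Laplacian and $\nabla'$ errors are of order $\rho_k^{2-\alpha}K/[w]$, which need not vanish.

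In fact the information you use ($\Delta w\ge-(n-1)K$, with $\nabla'w$ and $w$ bounded) is also satisfied by $w=(n-1)K\eps^2g(x_n/\eps)$ with $g(s)=-\frac12\operatorname{dist}(s,2\mathbb{Z})^2$. There $\nabla'w=0$ and $\|w\|_\infty\lesssim K\eps^2$, yet $[w]_{C^{0,\alpha}}\gtrsim K\eps^{2-\alpha}$. So this route gives at best $r^2$ instead of $r^3$ on the right-hand side. That is useless, because $\|w_r\|_{L^2}=o(r^2)$, and both \autoref{lem:convergence_to_q} and \autoref{lem:equiv_to_FRS6.6} need the $r^3$ error.

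The missing idea is to use the opposite sign. Since $\bH w_r=r^2\bigl(f(rx)\chi_{\{u>0\}}-f(0)\bigr)\le Lr^3|x|$, one has
\[
\Delta_x w_r=\partial_t w_r+\bH w_r\le M:=\|\partial_t w_r\|_{L^\infty}+Cr^3 ,
\]
and you have already shown $M\le C(\|w_r\|_{L^2(\gC_2)}+r^3)$. Hence $V:=-w_r(\cdot,t)+\frac{M}{2n}|x|^2$ is subharmonic for each $t$, with
\[
\|\nabla'V\|_\infty+\|V\|_\infty\le C\bigl(\|\nabla'w_r\|_\infty+\|w_r\|_\infty+M\bigr).
\]
Apply \autoref{prop:cc_argument} to $V$, rescaled, on a cylinder slightly larger than $\gC_1$ where your sup bounds hold. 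This gives
\[
[w_r(\cdot,t)]_{C^{0,\alpha}}\le[V]_{C^{0,\alpha}}+CM\le C\bigl(\|w_r\|_{L^2(\gC_2)}+r^3\bigr).
\]
The paper's proof only says ``by a rescaled version of \autoref{prop:cc_argument}'' at this point, without naming the subharmonic function. $w_r$ itself is not subharmonic, and $-w_r+\frac{M}{2n}|x|^2$ is the function that makes that step work.
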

\begin{proof}
    For $\textbf{e} \in \mathbb{S}^{n-1} \cap\{p_2 = 0\}$ define the incremental quotients for a function $g: \R^{n+1} \to \R$, $\textbf{e} \in \mathbb{S}^{n-1}$ and $h \in (0, 1)$ 
    $$
    \delta_{(\textbf{e}, h)} g := \frac{g(\cdot+\textbf{e} h, \cdot) - g(\cdot, \cdot)}{h} \quad \text{and} \quad \delta_{(t, h)} g:= \frac{g(\cdot, \cdot + h) - g(\cdot, \cdot)}{h}.
    $$
    We want to compute the incremental quotients of $w_r = (u - p)_r$. Note that $\delta_{(\textbf{e}, h)} p = 0$ since
    
    $\textbf{e} \in \{p=0\}\cap\mathbb{S}^{n-1}$ and also $\delta_{(t, h)} p = 0$ since $p$ does not depend on time variable.

    In the region $\{u > 0\}$ using that $f$ is Lipschitz we have
    $$
    \bH(\delta_{(\textbf{e}, h)}w_r) \leq r^2  \frac{f_r(x + \textbf{e}h) - f_r(x)}{h}  \leq Cr^3.
    $$
    In the region $\{u= 0\}$ we have
    $$
    \delta_{(\textbf{e}, h)}w_r = \delta_{(\textbf{e}, h)}u_r  = \frac{u_r(x + \textbf{e}h, t)}{h} \geq 0.
    $$
    Hence the function $\left( \delta_{(\textbf{e}, h)}w_r\right)_- +\frac{C}{2n}r^3|x|^2$ is subcaloric since 
    $$
    \bH\left( ( \delta_{(\textbf{e}, h)}w_r)_- +\frac{C}{2n}r^3|x|^2 \right) = \bH \left( ( \delta_{(\textbf{e}, h)}w_r)_-\right) +Cr^3 \geq 0.
    $$
    Therefore, since $u \in C^{1, 1}_x \cap C^{0, 1}_t$ we have 
    $$(\delta_{(\textbf{e}, h)}w_r)_- +\frac{C}{2n}r^3|x|^2 \longrightarrow (\partial_{\textbf{e}}w_r)_- +\frac{C}{2n}r^3|x|^2  \quad \text{a. e. as } h \to 0
    $$ 
    and the limit function is also subcaloric. We can repeat all the bounds in direction $-\textbf{e}$ instead of $\textbf{e}$, since $\delta_{(-\textbf{e}, h)} w_r \to -\partial_\textbf{e} w_r$ and $(-\partial_\textbf{e}w_r)_- = (\partial_\textbf{e}w_r)_+$ we obtain that $|\partial_\textbf{e}w_r | + \frac{C}{2n}r^3|x|^2$ is a subcaloric function. 
    
    Similarly for the time variable, in the region $\{ u > 0\}$ we have
    $$
    \bH \left( -\delta_{(t, -h)} w_r\right) = r^2 \frac{f_r(x)\chi_r(x, t - h) - f_r(x)}{h} \leq 0.
    $$

    Inside $\{ u = 0\}$, we have $-\delta_{(t, - h)} w_r \geq 0$.
    Therefore $\left(-\delta_{(t, - h)} w_r\right)_-$ is and its limit $(-\partial_t w_r)_-$ are subcaloric. Since $\partial_t w_r = \partial_t u_r \geq 0$ we have $\left(-\partial_t w_r\right)_- = \partial_t w_r = |\partial_t w_r|$, thus the absolute value is also subcaloric. 
    
    By \autoref{lem:half-harnack} we get
    $$
    \sup_{\gC_{4/3}} \left( |\partial_t w_r| + |\partial_\textbf{e} w_r|+\frac{C}{2n}r^3|x|^2\right) \leq C \left( \int_{\gC_{5/4}}  \left( |\partial_t w_r| + |\partial_\textbf{e} w_r|+\frac{C}{2n}r^3|x|^2\right)^\eps \right)^{1/\eps},
    $$
    and using that $\|\cdot\|_{L^\eps}$ is a quasi norm
    $$
    \sup_{\gC_{4/3}} \left( |\partial_t w_r| + |\partial_\textbf{e} w_r|\right) \leq  C \left( \int_{\gC_{5/4}}  \left( |\partial_t w_r| + |\partial_\textbf{e} w_r|\right)^\eps\right)^{1/\eps} + Cr^3
    $$

    for some dimensional constant $C$. Since $L^{1, \infty} \subset L^\eps$ it holds:
    \begin{equation*}
            \label{eq:tmp_eq_in CZ_estimates}
            \left( \int_{\gC_{5/4}}  \left( |\partial_t w_r| + |\partial_\textbf{e} w_r|\right)^\eps\right)^{1/\eps} 
            \leq C \sup_{\theta > 0} \theta|\{|\partial_t w_r|+|\partial_\textbf{e} w_r|) > \theta\} \cap \gC_{5/4}|. 
    \end{equation*}
    By Calderon-Zygmund (\autoref{lem:parabolic_CZ_estimate}) we can control the $|\partial_t w_r|$ term by
    $$
    \sup_{\theta > 0} \theta|\{|\partial_t w_r|) > \theta\} \cap \gC_{5/4}| \leq C\left( \|\bH w_r\|_{L^1(\gC_{5/3})} +\|w_r\|_{L^1(\gC_{5/3})}\right).
    $$
    Moreover the term $|\partial_\textbf{e} w_r|$ can also be controled by interpolation inequalities (Ehrling's lemma)
    $$
    \| \partial_\textbf{e} w_r \|_{L^{1, \infty}(\gC_{5/4})} \leq \| \partial_\textbf{e} w_r \|_{L^{1}(\gC_{5/4})} \leq C \left( \| D^2w_r\|_{L^1(\gC_{5/4})} + \| w_r\|_{L^1(\gC_{5/4})}\right).
    $$
    Combining the previous estimates we obtain
    $$
    \sup_{\gC_{4/3}} \left( |\partial_t w_r| + |\partial_\textbf{e} w_r|\right) \leq  C\left( \|\bH w_r\|_{L^1(\gC_{5/3})} +\|w_r\|_{L^1(\gC_{5/3})} + r^3\right).
    $$
    Note $\bH w_r \leq Cr^3$, taking $\xi$ a smooth cut-off function which verifies $\xi \equiv 1$ in $\gC_{5/3}$ and vanishes outside $\gC_2$ and integrating by parts we can estimate
    $$
    \|\bH w_r -Cr^3\|_{L^1(\gC_{5/3})} \leq -\int_{\gC_2} \xi \left(\bH w_r -Cr^3 \right) = -\int_{\gC_2} (\Delta + \partial_t)\xi w_r + Cr^3 \leq C\left(\|w_r\|_{L^2(\gC_2)} +r^3\right).
    $$
    thus
    \begin{equation*}
        \label{eq:supremum_bound_FRS_6.5}
        \sup_{\gC_{4/3}} \left( |\partial_t w_r| + |\partial_\textbf{e} w_r|\right) \leq C\left(\|w_r\|_{L^2(\gC_2)} +r^3\right).    
    \end{equation*}
    
    Therefore we have the Lipschtiz estimate for the first $n-1$ derivatives
    $$
    \|\nabla'w_r\|_{L^\infty(\gC_{4/3})} \leq C\left(\|w_r\|_{L^2(\gC_2)} +r^3\right).
    $$
    By a rescaled version of \autoref{prop:cc_argument} we obtain
    $$
    \|w_r\|_{C^{0, \alpha}(\gC_1)} \leq C\left(\|w_r\|_{L^2(\gC_2)} +r^3\right),
    $$
    and we are done.
\end{proof}

Next, we will prove some lemmas that will allow us to compare $\|w_r\|_{L^2(\gC_1)}$ and $H(r, w \zeta)^{1/2}$ at all scales. Note that we can only prove this result if $\lambda < \gamma$. In the case of $\lambda = \gamma$ we have no information about the rate of decrease of $H(r, w \zeta)^{1/2}$ which, a priori, could be arbitrarily fast. The following result is an auxiliary lemma that will be useful for proving an upper bound for $\|w_r\|_{L^2(\gC_1)}$.

\smallskip
\begin{lemma}[{\cite[Lemma 6.4]{FRS24}}]
    \label{lem:FRS_6.4}
    Let $g:\R^n \to \R$ satisfy
    $$
    \int_{\R^n}gdm = 1 \qquad and \qquad \int_{\R^n}|\nabla g|^2dm \leq 4,
    $$
    where $dm = G(x, -1)dx$ is the Gaussian measure. Then, for some dimensional $R_0 > 0$, we have
    $$
    \int_{B_{R_0}}g^2dm \geq \frac{1}{2}.
    $$
\end{lemma}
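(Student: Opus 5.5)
The plan is a contradiction-compactness argument built on the Gaussian Poincaré inequality. Suppose no such $R_0$ exists. Then for every $k\in\N$ there is $g_k$ with $\int g_k\,dm=1$, $\int|\nabla g_k|^2dm\le 4$, and $\int_{B_k}g_k^2\,dm<\tfrac12$. We aim to show that this is impossible because the constraints force $g_k$ to carry a definite amount of $L^2(dm)$ mass near the origin.

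\textbf{Step 1: uniform $L^2$ bound.} Recall that $dm=G(x,-1)dx=(4\pi)^{-n/2}e^{-|x|^2/4}dx$ is a probability measure. It is Gaussian with variance $2$ in each coordinate, so its Poincaré constant is $2$:
$$
\int (g-\bar g)^2dm\le 2\int|\nabla g|^2dm, \qquad \bar g:=\int g\,dm.
$$
Applying this with $\bar g_k=1$ gives
$$
\int g_k^2\,dm = 1+\int (g_k-1)^2dm \le 1+8=9.
$$
Thus $(g_k)$ is bounded in the Gaussian Sobolev space $H^1(dm)$.

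\textbf{Step 2: compactness.} Gaussian Sobolev embedding is compact, since Gaussian measure has exponentially decaying tails. Equivalently, Rellich on balls combined with a uniform tail estimate gives: $\int_{\R^n\setminus B_R}g^2dm\to0$ uniformly over the bounded set $\{\|g\|_{H^1(dm)}\le C\}$. This follows from the weighted identity obtained by integrating by parts against $x\,e^{-|x|^2/4}$:
$$
\int |x|^2 g^2 dm \le C\int (g^2+|\nabla g|^2)dm.
$$
Hence, up to a subsequence, $g_k\to g$ strongly in $L^2(dm)$ and weakly in $H^1(dm)$.

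\textbf{Step 3: passing to the limit.} The constraints pass to the limit as follows. First, $\int g\,dm=1$, since strong $L^2$ convergence implies convergence of the mean on a probability space. Second, $\int|\nabla g|^2dm\le4$ by weak lower semicontinuity. Third, for each fixed $R$ and all $k\ge R$, $\int_{B_R}g_k^2\,dm\le\int_{B_k}g_k^2\,dm<\tfrac12$. Strong convergence then gives $\int_{B_R}g^2\,dm\le\tfrac12$, and letting $R\to\infty$ yields $\int g^2dm\le\tfrac12$. This contradicts Cauchy–Schwarz on the probability space, which gives $\int g^2dm\ge(\int g\,dm)^2=1$.

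The only step requiring some care is the uniform tail estimate in Step 2. It is essentially the weighted estimate obtained by integrating $\operatorname{div}(g^2 x\,e^{-|x|^2/4})$. A direct, non-compactness version is also available: use Step 1 together with the tail bound $\int_{|x|>R}g^2dm\le \frac{C}{R^2}\int(g^2+|\nabla g|^2)dm\le \frac{C'}{R^2}$, and combine it with $\int g^2dm\ge1$. Choosing $R_0$ with $C'/R_0^2\le\tfrac12$ then gives the explicit dimensional radius.
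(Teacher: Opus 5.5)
Your proof is correct. Note that the paper gives no argument of its own here: the lemma is imported from \cite[Lemma 6.4]{FRS24}, so there is nothing in the text to compare against, and yours stands as a valid self-contained proof.

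Of your two versions, the direct one is the better one, and the contradiction--compactness detour is superfluous. Once you have the uniform tail bound $\int_{\{|x|>R\}}g^2\,dm\le R^{-2}\int|x|^2g^2\,dm$ together with $\int g^2\,dm\ge(\int g\,dm)^2=1$, you are done. Even within the compactness version, $\int_{B_k}g_k^2\,dm<\tfrac12$ already forces $\int_{\{|x|>k\}}g_k^2\,dm>\tfrac12$, which contradicts the tail bound without extracting any limit.

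The weighted estimate you invoke follows from $\mathrm{div}\bigl(x\,G(\cdot,-1)\bigr)=\bigl(n-\tfrac{|x|^2}{2}\bigr)G(\cdot,-1)$, Cauchy--Schwarz and absorption. This gives $\int|x|^2g^2\,dm\le 4n\int g^2\,dm+16\int|\nabla g|^2\,dm$. The absorption needs $\int|x|^2g^2\,dm<\infty$ a priori; this is automatic in the application, where $g$ is Lipschitz with compact support, and otherwise follows by a cutoff argument. Combined with your Poincar\'e bound $\int g^2\,dm\le 9$, this yields the explicit radius $R_0^2=72n+128$.

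In the proof of \autoref{lem:H_leq_w}, the lemma is applied to $\tilde w_r(\cdot,-1)$, which is normalized by $\int\tilde w_r^2\,G=1$. So the hypothesis actually used there is $\int g^2\,dm=1$, not $\int g\,dm=1$ as written. Your direct argument covers that version too, and more simply, because Step 1 (Gaussian Poincar\'e) is no longer needed: $\int|x|^2g^2\,dm\le 4n+64$, hence $\int_{B_{R_0}}g^2\,dm\ge 1-(4n+64)/R_0^2\ge\tfrac12$ for $R_0^2=8n+128$.
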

\smallskip
\begin{lemma}
    \label{lem:H_leq_w}
    Let $u: B_1 \times (-1, 1) \rightarrow [0, \infty)$ be a bounded solution of \eqref{eq:stefan_problem} and $(0, 0)$ a singular point. Set $w:= (u - p_2)$, let $\lambda^*$ be given by \autoref{def:lambdas} and assume $\lambda^* < \gamma$. Then, there exists $r_0 > 0$ such that for all $r \in (0, r_0)$ we have
    $$
    H(r, w\zeta)^{1/2} \leq C\|w_r\|_{L^2(\mathscr{C}^1)},
    $$
    for some constant $C$ depending only on $n$ and $\|u\|_{L^\infty}$.
\end{lemma}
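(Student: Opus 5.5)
The plan is to compare $H(r,w\zeta)$, which lives on the single time slice $\{t=-r^2\}$, with the space-time $L^2$ norm of $w_r$ on $\gC_1$. The bridge is the doubling property of $H$ from \autoref{cor:H_assymptotic_bhvr}, together with \autoref{lem:FRS_6.4}, which lets the Gaussian mass be localized to a fixed ball.

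\textbf{Step 1 (normalization on each slice).} For $s\in[\theta r,r]$, with a fixed $\theta\in(0,1)$ to be chosen, set
\[
g_s(x):=\frac{(w\zeta)_s(x,-1)}{H(s,w\zeta)^{1/2}},
\]
so that $\int g_s^2\,dm=1$ by scaling, with $dm=G(x,-1)dx$. The gradient term is controlled by the frequency:
\[
\int|\nabla g_s|^2dm=\frac{D(s,w\zeta)}{2H(s,w\zeta)}=\tfrac12\phi(s,w\zeta).
\]
Since $\lambda^*<\gamma$, \autoref{cor:H_assymptotic_bhvr} gives $s^{2\gamma}/H(s,w\zeta)\to0$. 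Hence $\phi(s,w\zeta)$ and $\phi^\gamma(s,w\zeta)$ differ by $o(1)$, and $\phi\to\lambda^*<\gamma<5/2$. So for $s$ small we get $\int|\nabla g_s|^2dm\le 2\le 4$.

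\textbf{Step 2 (localize).} \autoref{lem:FRS_6.4} is stated with a normalization on $\int g\,dm$. I would apply it in its natural form, $\int g^2\,dm=1$ (the $L^2$ normalization is clearly what is intended, since that is the quantity being localized). It gives a dimensional $R_0$ with $\int_{B_{R_0}}g_s^2\,dm\ge\frac12$. Because $G(\cdot,-1)$ is bounded above on $B_{R_0}$, this yields
\[
H(s,w\zeta)\le C\int_{B_{R_0}}(w\zeta)_s(x,-1)^2\,dx = C\int_{B_{R_0}}w_s(x,-1)^2\,dx
\]
for $s$ small, since $\zeta\equiv1$ on $B_{1/4}\supset B_{sR_0}$.

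\textbf{Step 3 (integrate in time).} Rescaling, $w_s(x,-1)=w_r\big((s/r)x,-s^2/r^2\big)$. Choose $\theta$ small so that $(s/r)R_0\le1$, i.e.\ $\theta\le 1/R_0$ (assuming $R_0\ge1$), and use $s\in[\theta r,\theta' r]$ with $\theta'=\min(1,1/R_0)$. Then the slice lies in $B_1\times\{-\tau\}$ with $\tau=s^2/r^2\in(0,1)$. Doubling from \autoref{cor:H_assymptotic_bhvr} with $K=1/\theta$ gives $H(r,w\zeta)\le C_K H(s,w\zeta)$. Combining with Step 2,
\[
H(r,w\zeta)\le C\,(r/s)^n\int_{B_1}w_r(y,-\tau)^2\,dy .
\]
Integrating over $\tau\in[\theta^2,\theta'^2]$ (with $d\tau\sim ds/r$) and using $(r/s)^n\le\theta^{-n}$ gives $H(r,w\zeta)\le C\|w_r\|^2_{L^2(\gC_1)}$. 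Taking square roots finishes the proof.

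\textbf{Main obstacle.} The delicate point is Step 1: keeping the Dirichlet quotient uniformly bounded for all $s\in[\theta r,r]$. This needs the truncation to be negligible, which is exactly where the hypothesis $\lambda^*<\gamma$ enters; otherwise $H$ could be much smaller than $s^{2\gamma}$. It also needs the exponentially small cut-off errors to be absorbed. Those errors are handled as in the Remark after \autoref{lem:cubic_estimates}, using that $H$ decays at most polynomially.
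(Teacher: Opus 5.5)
Your proof is correct and follows the paper's argument: normalize $(w\zeta)_s$ on a time slice, and bound its Gaussian Dirichlet energy by the frequency, where $\lambda^*<\gamma$ makes $\phi\approx\phi^\gamma$. Then localize with \autoref{lem:FRS_6.4} and integrate over a range of slices, using the doubling from \autoref{cor:H_assymptotic_bhvr}. The only differences are bookkeeping: the paper proves the bound on $\gC_{2R_0}$ and then rescales by $2R_0$, whereas you choose scales $s\in[\theta r,\theta' r]$ whose slices already lie in $\gC_1$; also, your closing worry about cut-off errors is unnecessary, since $\zeta\equiv 1$ on $B_{sR_0}$ makes that step exact.
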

\begin{proof}
    We will prove the following claim
        \begin{equation}
        \label{eq:claim_lemma_H_leq_w}
            \frac{1}{C}H(r, w\zeta)^{1/2} \leq \|w_r\|_{L^2(\gC_{2R_0})}
        \end{equation}

        for some dimensional $R_0$. Set $\tilde{w}_r:= \frac{(w \zeta)_r}{H(r, w\zeta)^{1/2}}$. By construction we have
        $$
        \int_{ \{t = -1\}} \tilde{w}^2_r G = 1,
        $$
        and
        $$
        \int_{ \{t = -1\}} |\nabla \left(\tilde{w}_r\right)|^2 G = \frac{1}{H(r, w \zeta)} \int_{ \{t = -1\}} |\nabla \left((w\zeta)_r\right)|^2 G = \frac{D(1, (w\zeta)_r)}{H(r, w\zeta)} = \frac{D(r, w\zeta)}{H(r, w\zeta)} = \phi(r, w\zeta).
        $$
        
        Since $\lambda^* < \gamma$, by \autoref{cor:H_assymptotic_bhvr} for small values of $r$ we have that $\phi(r, w\zeta)$ is close to $\phi^\gamma(r, w\zeta)$. Hence the hypothesis

        $$
        \int_{t = -1}|\nabla\tilde{w}_r|^2G \leq 4
        $$
        is fulfilled.
        By \autoref{lem:FRS_6.4} we obtain
        $$
        \int_{ B_{R_0} \times \{t = -1\}} \tilde{w}^2_r G \geq \frac{1}{2}.
        $$
        Repeating the previous argument replacing $r$ by $\theta^{1/2}r$ with $\theta \in (1, 4)$ and rescaling 
        $$
        \int_{ B_{2R_0} \times \{t = -\theta\}} \tilde{w}^2_r G \geq \frac{1}{2}.
        $$
    Integrating the previous inequality in $\{1 \leq \theta \leq 4 \}$ we get
        $$
        \int_{ B_{2R_0} \times \{-4 \leq t \leq -1\}} \tilde{w}^2_r G \geq \frac{3}{2}
        $$
        and the claim \eqref{eq:claim_lemma_H_leq_w} follows since $B_{2R_0} \times \{-4 \leq t \leq -1\} \subset \gC_{2R_0}$.

        Rescaling \eqref{eq:claim_lemma_H_leq_w} we obtain
        $$
        H\left( \frac{r}{2R_0}, w\zeta\right)^{1/2} \leq C\|w_r\|_{L^2(\gC_1)},
        $$

        since we have $\lambda^* < \gamma$, by \autoref{cor:H_assymptotic_bhvr}, $H\left( \frac{r}{2R_0}, w\zeta\right)$ and $H\left( r, w\zeta\right)$ are comparable and the result follows.
\end{proof}

\smallskip
\begin{lemma}
    \label{lem:w_and_H_comparable}
    Let $u: B_1 \times (-1, 1) \rightarrow [0, \infty)$ be a bounded solution of \eqref{eq:stefan_problem} and $(0, 0)$ a singular point. Set $w:= (u - p_2)$, let $\lambda^*$ be given by \autoref{def:lambdas} and assume $\lambda^* < \gamma$. Then there exists $r_0 > 0$ such that for all $r \in (0, r_0)$,
    \begin{equation}
        \label{eq:claim_w_leq_H}
        \|w_r\|_{L^2(\mathscr{C}^1)} \leq CH(r, w\zeta)^{1/2}.
    \end{equation}
\end{lemma}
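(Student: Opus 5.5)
The plan is to control $\|w_r\|_{L^2(\gC_1)}$ by Gaussian-weighted slice integrals and then use the doubling property of $H$ from \autoref{cor:H_assymptotic_bhvr}. Since $\lambda^*<\gamma$, that corollary gives two things: $r^{2\gamma}=o(H(r,w\zeta))$, and $H(\theta r,w\zeta)\le C_K H(r,w\zeta)$ for $\theta\in[K^{-1},K]$. What I need is the reverse of \autoref{lem:H_leq_w}: an $L^2$ norm over a whole cylinder bounded by a single Gaussian slice.

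\textbf{Step 1 (rescaling the cylinder).} For $r$ small, $\zeta\equiv1$ on $B_{r}$, so $w_r=(w\zeta)_r$ on $\gC_1$. Then
$$\|w_r\|^2_{L^2(\gC_1)}=\int_0^1\int_{B_1}w_r(x,-s)^2\,dx\,ds .$$
For $s\in(0,1]$, substitute $x=\sqrt{s}\,y$, so that $w_r(\sqrt s y,-s)=(w\zeta)(r\sqrt s\,y,-r^2s)$. On $B_1$ the Gaussian $G(x,-s)=(4\pi s)^{-n/2}e^{-|x|^2/(4s)}$ is bounded below by $c\,s^{-n/2}e^{-1/(4s)}$. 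This lower bound degenerates badly as $s\to0$, so the region near $t=0$ needs separate treatment.

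\textbf{Step 2 (splitting the time interval).} I split $(0,1)$ dyadically into $s\in(4^{-k-1},4^{-k}]$.
\begin{itemize}
\item For $k=0$, i.e.\ $s\in(1/4,1]$, the weight $G(\cdot,-s)$ is bounded below on $B_1$ by a dimensional constant. Hence
$$\int_{B_1}w_r(\cdot,-s)^2\le C\,H(r\sqrt s,w\zeta)\le C\,H(r,w\zeta),$$
where the last step is the comparability from \autoref{cor:H_assymptotic_bhvr} with $K=2$.
\item For larger $k$, the slice $t=-r^2s$ with $s\sim4^{-k}$ is at scale $\rho=2^{-k}r$. Bounding $\int_{B_1}$ at this scale by Gaussian slices at scale $\rho$ would require the Gaussian on $B_{1/\rho'}$, which is exponentially small. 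So this route fails.
\end{itemize}

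\textbf{Step 3 (the main obstacle: the region near $t=0$).} Instead I use the pointwise control from \autoref{lem:L2_controls}. Reading it at a larger scale, namely $w_{2r}$ on $\gC_1$, bounds $w_r$ on $\gC_2$ in $L^\infty$. Concretely,
$$\|w_r\|_{L^\infty(\gC_1)}\le C\big(\|w_{r}\|_{L^2(\gC_1\setminus\{t>-1/4\})}+\ldots\big).$$
This step is the main obstacle.

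\textbf{Alternative for Step 3 (backwards caloric estimate).} I would try the following first. Let $v$ solve the heat equation in $\gC_1$ with source $\bH w_r$, which has size $O(r^3)$ plus a sign-controlled part. Interior estimates near $t=0$ are then controlled by the values at $t\in(-1,-1/4)$, because the heat equation propagates forward in time. More precisely, by a caloric-extension argument, $\sup_{B_{1}\times(-1/4,0)}|w_r|$ is bounded by $\|w_r\|_{L^2(B_2\times(-1,-1/4))}+Cr^3$, via local $L^\infty$ estimates on slightly enlarged cylinders (\autoref{lem:FRS_6.1}).

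\textbf{Step 4 (conclusion).} On the region $B_2\times(-1,-1/4)$ the Gaussian weight at scale $2r$ is bounded below. Together with the doubling of $H$, this gives
$$\|w_r\|_{L^2(\gC_1)}\le C\big(H(r,w\zeta)^{1/2}+r^3\big).$$
Finally, $r^3\le r^{\gamma}=o\big(H^{1/2}\big)$ since $\gamma<3$, so the $r^3$ term is absorbed by \autoref{cor:H_assymptotic_bhvr}, which yields \eqref{eq:claim_w_leq_H}.
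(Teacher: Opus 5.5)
Your Steps 1, 2 and 4 are fine, but Step 3 --- which you rightly single out as the crux --- relies on an estimate that is false. The bound $\sup_{B_1\times(-1/4,0)}|w_r|\le C\big(\|w_r\|_{L^2(B_2\times(-1,-1/4))}+r^3\big)$ does not follow from ``the heat equation propagates forward in time''. On a bounded cylinder, later values also depend on what enters through the lateral boundary $\partial B_2\times(-1/4,0)$. For a counterexample, solve the heat equation in $B_3\times(-1,0)$ with zero initial data and lateral data that vanishes for $t\le-1/4$ and is positive afterwards. The solution is caloric in $B_2\times(-1,0)$, vanishes identically on $B_2\times(-1,-1/4)$, and is positive on $B_1\times(-1/4,0)$. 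The local estimates you invoke (\autoref{lem:FRS_6.1}, \autoref{lem:L2_controls}) need the $L^2$ norm on the whole cylinder $\gC_2=B_2\times(-4,0)$, which contains the layer near $t=0$. Reading them at scale $2r$ only reproduces that layer one scale up, since $\|w_r\|_{L^2(\gC_2)}=2^{(n+2)/2}\|w_{2r}\|_{L^2(\gC_1)}$. So your first version of Step 3 (with the ``$\ldots$'') cannot be closed either. The thin layer $B_1\times(-\eta,0)$ is exactly where the Gaussian weights are exponentially small, so this is the entire content of the lemma, and Step 4 inherits the gap.

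The paper avoids needing any quantitative lower bound on $G$ there by a compactness argument. Assuming the claim fails, let $r_k$ be the \emph{largest} radius with $\|w_r\|_{L^2(\gC_1)}>kH(r,w\zeta)^{1/2}$. Maximality and the doubling in \autoref{cor:H_assymptotic_bhvr} give $\|w_{2r_k}\|_{L^2(\gC_1)}\le kH(2r_k,w\zeta)^{1/2}\le C\|w_{r_k}\|_{L^2(\gC_1)}$. By \autoref{lem:L2_controls}, the normalized functions $w_{r_k}/\|w_{r_k}\|_{L^2(\gC_1)}$ are then bounded in $H^1(\gC_1)$ and converge strongly in $L^2(\gC_1)$ to some $q$ of unit norm; this compactness is what prevents mass from concentrating near $t=0$. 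On the other hand, $H(\tau r_k,w\zeta)\le C_\tau H(r_k,w\zeta)$ forces $\int_{\{t=-\tau^2\}}q^2G=0$ for every $\tau\in(0,1)$. Only the positivity of $G$ is then needed to get $q\equiv0$, a contradiction. If you want to keep a direct argument, you must pay a small factor for the layer rather than control it by earlier times. The $L^\infty$ bound gives $\int_{B_1\times(-\eta,0)}w_r^2\le C\eta\big(\|w_{2r}\|^2_{L^2(\gC_1)}+r^6\big)$, and the rest of $\gC_1$ contributes at most $C_\eta H(r,w\zeta)$. With $A(r):=\|w_r\|^2_{L^2(\gC_1)}$, this yields $A(r)\le C_\eta H(r,w\zeta)+C\eta A(2r)+Cr^6$. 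You can iterate this over dyadic scales using the growth bound for $H$ from \autoref{lem:FRS_5.6}, once $\eta$ is small enough that the series converges and the tail is $o(r^{2\gamma})$.
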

\begin{proof}
     We will argue by contradiction, assume \eqref{eq:claim_w_leq_H} does not hold. 
    Consider a particular subsequence $w_{r_k}$ where each of the $r_k$ is the biggest value such that \eqref{eq:claim_lemma_H_leq_w} does not hold taking $C=k$, i.e. we define
    $$
    r_k := \sup \left\{ r > 0: \|w_{r}\|_{L^2(\gC_1)} > k H(r, w\zeta)^{1/2}\right\}.
    $$
    Consider the sequence 
    $$
    v_{r_k} := \frac{w_{r_k}}{\|w_{r_k}\|_{L^2(\gC_1)}},
    $$
    by construction $\|v_{r_k}\|_{L^2(\gC_1)} = 1$. Moreover, by \autoref{lem:L2_controls} we have
    \begin{equation}
    \label{eq:problematic_quotient_1}
        \|\nabla v_{r_k}\|_{L^2(\gC_1)} = \frac{\| \nabla w_{r_k}\|_{L^2(\gC_1)}} {\|w_{r_k}\|_{L^2(\gC_1)}}  \leq C\frac{\|w_{r_k}\|_{L^2(\gC_2)} + r_k^3}{\|w_{r_k}\|_{L^2(\gC_1)}}.
    \end{equation}

    The term $\|w_{r_k}\|_{L^2(\gC_2)}/\|w_{r_k}\|_{L^2(\gC_1)}$ is bounded by construction of the subsequence since
    $$
    \frac{1}{2^{(n+2)/2}} \|w_{r_k}\|_{L^2(\gC_2)} = \|w_{2r_k}\|_{L^2(\gC_1)} 
    \leq k H(2r_k, w\zeta)^{1/2}
    \leq kCH(r_k, w\zeta)^{1/2} 
    \leq C\|w_{r_k}\|_{L^2(\gC_1)}.
    $$
    Also $r^3/\|w_r\|_{L^2(\gC_1)} \to 0$ by \autoref{lem:H_leq_w} and \autoref{cor:H_assymptotic_bhvr}. Hence, the right hand side of \eqref{eq:problematic_quotient_1} is bounded and $v_{r_k} \to q$ strongly in $L^2(\gC_1)$ and weakly in $H^1(\gC_1)$. Since $\|w_{r_k} \|_{L^\infty(\gC_1)}$ and $\|\nabla w_{r_k} \|_{L^\infty(\gC_1)}$ are also controlled by $\|w_{r_k}\|_{L^2(\gC_2)} + r_k^3$ we conclude that the convergence to $q$ is uniform in $\gC_1$.

    For any $\tau \in (0, 1)$ consider
    \begin{equation}
        \label{eq:bound_H_tau}
    H(\tau, (v\zeta)_{r_k}) = \frac{1}{\|w_{r_k}\|^2_{L^2(\gC_1)}} H(\tau, (w\zeta)_{r_k}) \leq C\frac{1}{k^2},
    \end{equation}

    where in the last inequality we have used that $H(\tau, (w\zeta)_{r_k})$ and $H(r_k, w\zeta)$ are comparable since $\lambda^* < \gamma$.

    Taking limits when $k \to \infty$ in \eqref{eq:bound_H_tau} we obtain
    $$
    \int_{\{t = -\tau^2\}} q^2 G \leq 0,
    $$

    which is a contradiction since, by construction $\|q\|_{L^2(\gC_1)} = 1$.
\end{proof}



\smallskip
\begin{lemma}
    \label{lem:convergence_to_q}
    Let $u: B_1 \times (-1, 1) \rightarrow [0, \infty)$ be a bounded solution of \eqref{eq:stefan_problem}, $(0, 0) \in \Sigma_{n-1}$. Set $w:= (u - p_2)$, let $\lambda^*$ be given by \autoref{def:lambdas} and assume $\lambda^* < \gamma$. Define 

    $$
    \tilde{w}_r := \frac{w_r}{H(r, w\zeta)^{1/2}}.
    $$

    Then, up to subsequences, $\tilde{w}_r \rightarrow q$ locally uniformly in $\R^n\times(-\infty, 0)$, where $q$ is a $\lambda^*$-homogeneous function which is also a solution of the parabolic Signorini problem, i.e. $q$ verifies
    $$
    \begin{cases}
        \bH q \leq 0 \quad \text{and} \quad q \bH q = 0 & \text{in } \R^n \times (-\infty, 0) \\
        \bH q = 0 & \text{in } \R^n \times (-\infty, 0) \setminus \{p_2 = 0 \} \\
        q \geq 0 & \text{on } \{p_2 = 0 \} \\
        \partial_t q \geq 0 & \text{in } \R^n \times (-\infty, 0). 
    \end{cases}
    $$
\end{lemma}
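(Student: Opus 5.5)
We will follow the scheme of the analogous blow-up result in \cite{FRS24}, with two additions: the $O(r^3)$ errors coming from $f\not\equiv$ const have to be shown negligible, and the extra regularity of \autoref{lem:L_infty_bounds} is used to get compactness.

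\textbf{Step 1: bounds for the rescalings.} By \autoref{cor:H_assymptotic_bhvr} (with $\lambda^*<\gamma$) we have $r^{2\gamma}/H(r,w\zeta)\to0$, and $\gamma<5/2<3$. Hence $r^3/H(r,w\zeta)^{1/2}\to 0$. Combining \autoref{lem:H_leq_w}, \autoref{lem:w_and_H_comparable}, and the doubling property in \autoref{cor:H_assymptotic_bhvr}, we get
$$\|w_{Rr}\|_{L^2(\gC_1)}\le C_R\, H(r,w\zeta)^{1/2}\qquad\text{for every fixed } R\ge1,$$
that is, $\|\tilde w_r\|_{L^2(\gC_R)}\le C_R$.
\begin{itemize}
\item \autoref{lem:L_infty_bounds}, applied at scale $Rr$ and rescaled, bounds $\partial_t\tilde w_r$, $\nabla'\tilde w_r$ and $[\tilde w_r]_{C^{0,\alpha}_x}$ uniformly on compact subsets of $\R^n\times(-\infty,0]$.
\item \autoref{lem:L2_controls} bounds $\nabla\tilde w_r$ in $L^2_{\rm loc}$.
\end{itemize}
Together these give equicontinuity in $(x,t)$. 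By Arzel\`a--Ascoli, a subsequence $\tilde w_{r_k}\to q$ locally uniformly, and also weakly in $H^1_{\rm loc}$. Moreover $H(1,q\zeta_\infty)=1$: the Gaussian tails are controlled using the polynomial growth bounds, so $q\not\equiv0$.

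\textbf{Step 2: the equations.} We take limits in the following relations.
\begin{itemize}
\item $\bH\tilde w_r = r^2\big(f(rx)\chi_{\{u_r>0\}}-f(0)\big)/H^{1/2}$. This is $\le Cr^3/H^{1/2}\to0$, so $\bH q\le0$ in distributions.
\item Away from $\{x_n=0\}$, nondegeneracy ($p_2>0$ there) gives $u_r>0$ for $r$ small, so $\bH\tilde w_r=O(r^3)/H^{1/2}\to0$. Hence $q$ is caloric off $\{p_2=0\}$.
\item On $\{x_n=0\}$, $\tilde w_r=u_r/H^{1/2}\ge0$, so $q\ge0$ there.
\item $\partial_t\tilde w_r=\partial_t u_r/H^{1/2}\ge0$, so $\partial_t q\ge 0$.
\end{itemize}

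For the complementarity condition $q\,\bH q=0$: the measure $\bH q$ is supported on $\{x_n=0\}$. Where $q>0$ on that set, the uniform convergence gives $u_r>0$ nearby, so the measure $\bH\tilde w_r$ tends to $0$ there. This yields $q\,\bH q=0$. A cleaner route is to pass to the limit in $\int \tilde w_r\,\bH\tilde w_r\,\xi$, where $\xi\ge0$ is a test function. On $\{u=0\}$ we have $\tilde w_r\bH\tilde w_r=p_2\,r^2f(0)/H\ge0$. The $\int|w\bH w|$ control from \autoref{lem:FRS_6.1} then gives the needed uniform integrability.

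\textbf{Step 3: homogeneity (the main obstacle).} We use the frequency along the sequence. By scaling, $\phi(s,(\tilde w_r\zeta)) = \phi(sr,w\zeta)$. Since $r^{2\gamma}/H\to0$, this differs from $\phi^\gamma(sr,w\zeta)$ by $o(1)$, and $\phi^\gamma(sr,w\zeta)\to\lambda^*$ for every $s$. The delicate point is to pass the limit inside $D$: weak $H^1$ convergence only gives $\phi(s,q)\le\lambda^*$. We would upgrade this to strong $H^1_{\rm loc}$ convergence. Test the equation with $\tilde w_r\xi$, using
$$\int|\nabla\tilde w_r|^2\xi=-\int \tilde w_r\bH\tilde w_r\xi+\text{lower order terms},$$
together with the limit identity from Step 2. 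This gives $\phi(s,q)\equiv\lambda^*$ for all $s$. Then \autoref{lem:frequency_formula_monotonicity}, in the form of an equality, forces the Cauchy--Schwarz defect $\langle Zq,Zq\rangle\langle q,q\rangle-\langle q,Zq\rangle^2$ to vanish. Integrating the almost-monotonicity between $s_1r_k$ and $s_2r_k$, the error $Cr^{\eps}$ disappears in the limit. Hence $Zq=\lambda^* q$, i.e.\ $q$ is parabolically $\lambda^*$-homogeneous. The exponential Gaussian tails are handled by the polynomial growth of $\tilde w_r$ coming from the doubling bound.
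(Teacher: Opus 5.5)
Your proposal is correct in outline. For everything the paper's proof actually writes down, you follow the same route:
\begin{itemize}
\item uniform $L^2$ bounds on $\tilde w_r$ from \autoref{lem:w_and_H_comparable} and the doubling in \autoref{cor:H_assymptotic_bhvr};
\item compactness from \autoref{lem:L2_controls} and \autoref{lem:L_infty_bounds}, using $r^3/H(r,w\zeta)^{1/2}\to0$;
\item $\bH q\le0$ from $\bH\tilde w_r\le\theta(r)$, and caloricity off $\{x_n=0\}$ by nondegeneracy of $p_2$.
\end{itemize}
Note that $r^3/H(r,w\zeta)^{1/2}\to0$ only needs $\gamma<3$, not $\gamma<5/2$. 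This matters because \autoref{prop:lamb=gam} reuses the lemma for larger $\gamma$.

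The real difference is what you add.
\begin{itemize}
\item \emph{Homogeneity.} The paper's proof ends after the Signorini conditions. It never shows that $q$ is $\lambda^*$-homogeneous, nor the easy facts $q\ge0$ on $\{p_2=0\}$ and $\partial_tq\ge0$. Yet homogeneity is exactly what \autoref{prop:lamb=gam} needs to invoke the classification of homogeneous Signorini solutions. Your Step 3 supplies it by the standard argument of \cite{FRS24}: scale invariance of the frequency, integrating the almost-monotonicity over $[s_1r_k,s_2r_k]$, and lower semicontinuity of the Cauchy--Schwarz defect. So your proof is more complete.
\item \emph{Complementarity.} For $q\,\bH q=0$ the paper asserts $\bar w_r\bH\bar w_r\ge0$. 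That is only guaranteed on $\{u=0\}$; on $\{u>0\}$ the product need not be nonnegative, though it is $O(\theta(r))$. Your argument is cleaner: near points where $q>0$ one has $w_r>0$, hence $u_r>0$, hence $\bH q=0$ on $\{q>0\}$; combined with $q\ge0$ on $\{x_n=0\}$ this gives the claim.
\end{itemize}

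A few points to tidy in Step 3.
\begin{itemize}
\item The Cauchy--Schwarz term appears in \autoref{lem:frequency_error_formula}, not in the statement of \autoref{lem:frequency_formula_monotonicity}, which has already discarded it. Integrate the former, using the lower bound on $E^\gamma$ from the proof of the latter (or of \autoref{lem:frequency_formula_monotonicity_v2} when $\gamma\ge5/2$).
\item The function whose frequency at $s$ equals $\phi(sr,w\zeta)$ is $(w\zeta)_r/H(r,w\zeta)^{1/2}=\tilde w_r\zeta_r$, not $\tilde w_r\zeta$.
\item Space-time strong convergence only gives $\phi(s,q)=\lambda^*$ for a.e.\ $s$, which suffices.
\end{itemize}

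In fact you can skip the upgrade to strong $H^1_{\mathrm{loc}}$ convergence altogether. Set $v_k:=\tilde w_{r_k}\zeta_{r_k}$ and let $F$ be as in \autoref{lem:FRS_5.6}.
\begin{itemize}
\item The same integrated inequality also gives $\int_{s_1r_k}^{s_2r_k}F(\rho)^2\,d\rho/\rho\to0$.
\item Since $\langle v,Zv\rangle_s=D(s,v)+2s^2\langle v,\bH v\rangle_s$, you get $\langle v_k,Zv_k\rangle_s/H(s,v_k)=\phi(sr_k,w\zeta)+(1+o(1))F(sr_k)\to\lambda^*$ in $L^2(ds)$.
\item Since $\langle v_k,Zv_k\rangle_s\rightharpoonup\langle q,Zq\rangle_s$, this yields $\langle q,Zq\rangle_s=\lambda^*H(s,q)$.
\item The vanishing integrated Cauchy--Schwarz defect, together with weak lower semicontinuity of $\langle Zv_k,Zv_k\rangle_s$ (integrated in $s$), then gives $\langle Zq-\lambda^*q,Zq-\lambda^*q\rangle_s=0$ for a.e.\ $s$.
\end{itemize}
This uses only weak convergence of $\nabla v_k$.
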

\begin{proof}
    First we will prove convergence (up to subsequence) in $H^1_\text{loc}(\R^n \times (-\infty, 0])$. By the estimate from \autoref{lem:L2_controls}, since $H(r, w\zeta)$ is comparable at all scales (\autoref{cor:H_assymptotic_bhvr}), it is enough to prove that $\|\tilde{w}_r\|_{L^2(\gC_1)}$ is bounded. For doing so note
    $$
    \|\tilde{w}_r\|_{L^2(\gC_1)} = \frac{\|w_r\|_{L^2(\gC_1)}}{H(r, w\zeta)^{1/2}} \leq C, 
    $$

    and also
    $$
    \|\nabla\tilde{w}_r\|_{L^2(\gC_1)} = \frac{\| \nabla w_r\|_{L^2(\gC_1)}}{H(r, w\zeta)^{1/2}} \leq C \frac{\|w_r\|_{L^2(\gC_2)} + r^3}{H(r, w\zeta)^{1/2}}, 
    $$
    
    since $\lim_{r\downarrow0} \frac{r^3}{H(r, w\zeta)^{1/2}} = 0$ we have $\tilde{w}_r \to q$ for some $q \in H^1_\text{loc}$. Using the $L^\infty$ estimates on $w_r$, $\partial_t w_r$ and $[w_r]_{C^{0, \alpha}_x}$ from \autoref{lem:L2_controls} and \autoref{lem:L_infty_bounds} we conclude that, up to subsequences, the convergence to $q$ is locally uniformly.

    For checking that $q$ is a solution of the parabolic Signorini problem note 
    $$
    \bH \tilde{w}_r =  \frac{r^2 \left( f(rx) \chi_{\{u > 0\}} - f(0) \right)}{H(r, w\zeta)^{1/2}} \leq \frac{Lr^3}{H(r, w\zeta)^{1/2}} 
    $$
    and recall that 
    $$
    \theta(r) := \frac{Lr^3}{H(r, w\zeta)^{1/2}} \longrightarrow 0 \quad \text{as} \quad r \longrightarrow 0. 
    $$
    Define $\bar{w}_r := \tilde{w}_r - \frac{|x|^2}{2n} \theta(r)$. Hence for $r$ small enough $\bH \bar{w}_r \leq 0 $ and by Riesz representation theorem $\bH \bar{w}_r$ defines a measure which converges to $\bH q$ since $\bar{w}_r$ also converges to $q$ locally uniformly. Moreover it is verified $\bH q \leq 0$. By a similar argument $ \bar{w}_r \bH \bar{w}_r \geq 0$ implies $q\bH q \geq 0$. We conclude that $\bH q$ is supported on $\{p_2 = 0 \}$ since for any set $E \subset \{p_2 > 0\}$ we have $E \subset \{u > 0\}$ which implies $\bH q = 0$.
    
\end{proof}

The following lemma will be useful to control the $L^2$-norm of $w_r$ when the frequency is saturated.
\smallskip
\begin{lemma}
    \label{lem:bound_for_saturated_frequency}
    Let $u: B_1 \times (-1, 1) \rightarrow [0, \infty)$ be a bounded solution of \eqref{eq:stefan_problem}, $(0, 0) \in \Sigma_{n-1}$. Set $w := (u - p_2)$, let $\lambda^*$ be given by \autoref{def:lambdas} and assume $\lambda^* = \gamma$ and $\lambda \in (2, 3)$. Then, for $r$ small enough it holds
    \begin{equation}
    \label{eq:bound_of_L_2}
        \|w_r\|_{L^2(\gC_1)} \leq C r^{\lambda^* - \delta},    
    \end{equation}
    where $C$ is a constant and $\delta > 0$ is any positive number.

\end{lemma}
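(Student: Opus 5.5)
The plan is to convert the Gaussian-weighted bound of \autoref{cor:H_bound_for_saturated_regim} into an unweighted $L^2$ bound on the cylinder $\gC_1$. Since $\lambda^* = \gamma$, that corollary gives $H(s, w\zeta) \le C s^{2\lambda^*}$ for all small $s$. (When $\gamma \ge 5/2$ I would use the variant lemmas of the previous subsection, which assume a growth bound $\|w_r\|_{L^\infty(\gC_1)}\le Cr^{2+\alpha}$.) The difficulty is that $H(s,w\zeta)$ controls only one time slice, with a Gaussian weight, while $\|w_r\|_{L^2(\gC_1)}$ involves all slices $t\in(-r^2,0)$, including those arbitrarily close to $t=0$. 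The loss of $\delta$ in the exponent is what should pay for this.

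First, I would write $\|w_r\|_{L^2(\gC_1)}^2$ in terms of the original variables:
\[
\|w_r\|^2_{L^2(\gC_1)} = r^{-n-2}\int_{-r^2}^0\int_{B_r} w^2\,dx\,dt .
\]
On the slice $t=-s^2$, the Gaussian satisfies $G(x,-s^2)\ge c s^{-n}$ for $|x|\le r$ provided $r\le Ks$. In that range,
\[
\int_{B_r} w^2(\cdot,-s^2)\,dx \le C s^n e^{K^2/4} H(s,w\zeta)\le C_K s^{n+2\lambda^*}.
\]
This handles slices with $s\ge r/K$. Integrating over those times gives a contribution of order $r^{2\lambda^*}$ to $\|w_r\|^2_{L^2(\gC_1)}$, which is even better than needed.

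Second, I would treat the thin layer $t\in(-r^2/K^2,0)$ using monotonicity in time. Since $\partial_t w = \partial_t u \ge 0$ and $u\ge0$, each point has $w(x,t)\in[w(x,-r^2/K^2),\,w(x,0)]$, so $|w(x,t)|\le |w(x,-r^2/K^2)|+|w(x,0)|$. The term $w(x,0)$ is not controlled directly, so I would instead use the regularity estimates. The $\partial_t$ part of \autoref{lem:L_infty_bounds} gives $\|\partial_t w_\rho\|_{L^\infty(\gC_1)}\le C(\|w_\rho\|_{L^2(\gC_2)}+\rho^3)$. Integrating in time over the layer, I expect a bound of the form
\[
\|w_r\|_{L^2(\gC_1)}\le C r^{\lambda^*}+ C K^{-2}\big(\|w_{2r}\|_{L^2(\gC_1)}+r^3\big).
\]
Here the first term comes from the good slices and the second from the layer, using $\|w_r\|_{L^2(\gC_2)}\sim\|w_{2r}\|_{L^2(\gC_1)}$.

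Third, I would iterate this relation. Set $a(r):=\|w_r\|_{L^2(\gC_1)}$, so the previous step reads $a(r)\le Cr^{\lambda^*}+CK^{-2}a(2r)+Cr^3$, with $\lambda^*<3$. Take $K$ large enough that $CK^{-2}\le 2^{-(\lambda^*-\delta)}/2$. Iterating from a fixed scale $r_0$ down to $r$, or equivalently running a standard absorption argument in the spirit of \autoref{lem:elliptic_PDE_2.27}, then yields $a(r)\le C_\delta r^{\lambda^*-\delta}$. The constant $C_K$ from the first step grows with $K$, which is why only $r^{\lambda^*-\delta}$ rather than $r^{\lambda^*}$ comes out.

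The main obstacle is the second step. The recursion must run in the right direction: bounding a small scale by a larger one is fine, since the error then decays geometrically. One also has to check that the $\partial_t$ estimate from \autoref{lem:L_infty_bounds} really gains the factor $K^{-2}$ from the thinness of the layer. If that fails, my fallback is to bound $H(s,w\zeta)$ directly for $s$ near $0$ via the time monotonicity of $w$. Concretely, $w(\cdot,t)\ge w(\cdot,-s^2)$ for $t\ge -s^2$ would give lower control, while upper control of $w(\cdot,t)$ would come from the bound at the slightly later time $-(s/2)^2$ together with the Gaussian comparison across dyadic times.
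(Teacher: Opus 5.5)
Your argument is correct, but it takes a genuinely different route from the paper's. Both proofs rest on the same inputs: the bound $H(s,w\zeta)\le Cs^{2\lambda^*}$ and the estimates of \autoref{lem:L2_controls} and \autoref{lem:L_infty_bounds}.

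\textbf{The paper's route.} It argues by contradiction and compactness:
\begin{enumerate}
\item It takes $r_k$ to be the largest scale with $\|w_{r_k}\|_{L^2(\gC_1)}\ge k r_k^{\lambda^*-\delta}$. This choice gives the doubling $\|w_{r_k}\|_{L^2(\gC_2)}\le C\|w_{r_k}\|_{L^2(\gC_1)}$ for free.
\item It normalizes by $\|w_{r_k}\|_{L^2(\gC_1)}$ and extracts a limit $q$ with $\|q\|_{L^2(\gC_1)}=1$, using strong $L^2$ and uniform convergence.
\item It shows from $H(\tau r_k,w)\le C_\tau r_k^{2\lambda^*}$ that $q$ vanishes on every slice $\{t=-\tau^2\}$, which is a contradiction.
\end{enumerate}

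\textbf{Your route.} You convert the weighted single-slice information into the cylinder bound directly:
\begin{enumerate}
\item The Gaussian lower bound on $B_r$ for $s\in[r/K,r]$ handles the bulk.
\item A regularity estimate handles the thin layer $t\in(-r^2/K^2,0)$, which the Gaussian-weighted quantity does not see.
\item The recursion $a(r)\le C_K r^{\lambda^*}+\theta\,a(2r)+Cr^3$, with $\theta$ small, is absorbed by downward induction from a fixed scale.
\end{enumerate}

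\textbf{What each buys.} The compactness proof is shorter and reuses the template of \autoref{lem:w_and_H_comparable}. It treats the slices near $t=0$ only implicitly, through strong $L^2$ convergence to a limit that vanishes slice by slice. Your proof is constructive, with explicit constants. It needs only a sup bound on $w_r$ or $\partial_t w_r$, not strong compactness.

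\textbf{Two corrections to your commentary.}

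First, the gain in Step 2 is automatic, so your fallback is unnecessary. For $t\in(-K^{-2},0)$ you have $|w_r(x,t)-w_r(x,-K^{-2})|\le K^{-2}\|\partial_t w_r\|_{L^\infty(\gC_1)}$, which gives at least the factor $K^{-2}$ you wanted. Even the $L^\infty$ bound of \autoref{lem:L2_controls} alone yields a factor $K^{-1}$, coming from the measure of the layer.

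Second, the loss of $\delta$ is not caused by $C_K$. You fix $K$ once, so $C_K$ is just a constant. Since $\lambda^*<3$, the term $Cr^3$ is dominated by $Cr^{\lambda^*}$, and the induction closes at the exponent $\lambda^*$ itself. Your argument therefore actually proves $\|w_r\|_{L^2(\gC_1)}\le Cr^{\lambda^*}$. The paper's contradiction argument would likewise run with $\delta=0$.
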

\begin{proof}
    We will argue by contradiction, assume \eqref{eq:bound_of_L_2} does not hold. 
    Consider a particular subsequence $w_{r_k}$ where each of the $r_k$ is the biggest value such that \eqref{eq:bound_of_L_2} does not hold (taking $C=k$), i.e. we define
    $$
    r_k := \sup \left\{ r > 0: \|w_{r_k}\|_{L^2(\gC_1)} \geq k r_k^{\lambda^* - \delta}\right\}.
    $$
    Consider the sequence 
    $$
    v_{r_k} = \frac{w_{r_k}}{\|w_{r_k}\|_{L^2(\gC_1)}},
    $$
    by construction $\|v_{r_k}\|_{L^2(\gC_1)} = 1$. Moreover, by \autoref{lem:L2_controls} we have
    \begin{equation}
    \label{eq:problematic_quotient}
        \|\nabla v_{r_k}\|_{L^2(\gC_1)} = \frac{\| \nabla w_{r_k}\|_{L^2(\gC_1)}} {\|w_{r_k}\|_{L^2(\gC_1)}}  \leq C\frac{\|w_{r_k}\|_{L^2(\gC_2)} + r_k^3}{\|w_{r_k}\|_{L^2(\gC_1)}}.
    \end{equation}

    The term $\|w_{r_k}\|_{L^2(\gC_2)}/\|w_{r_k}\|_{L^2(\gC_1)}$ is bounded by construction of the subsequence since
    $$
    \frac{1}{2^{(n+2)/2}} \|w_{r_k}\|_{L^2(\gC_2)} = \|w_{2r_k}\|_{L^2(\gC_1)} < k\left( 2r_k\right)^{\lambda^* - \delta} \leq 2^{\lambda^* - \delta} \|w_{r_k}\|_{L^2(\gC_1)}.
    $$

    Also we can assume that $r^3/\|w_{r_k}\|_{L^2(\gC_1)}$ is bounded, otherwise the claim of the lemma is trivial. Hence, the right hand side of \eqref{eq:problematic_quotient} is bounded and $v_{r_k} \to q$ strongly in $L^2(\gC_1)$ and weakly in $H^1(\gC_1)$. Since $\|w_{r_k} \|_{L^\infty(\gC_1)}$ and $\|\nabla w_{r_k} \|_{L^\infty(\gC_1)}$ are also controlled by $\|w_{r_k}\|_{L^2(\gC_2)} + r_k^3$ we conclude that the convergence to $q$ is uniform in $\gC_1$.
    
    For any $\tau \in (0, 1)$ consider
    \begin{equation}
        \label{eq:contradiction_with_H}
        H(\tau, v_r) = \int_{\{t = -\tau^2\}}v_r^2G = \frac{1}{\|w_{r}\|^2_{L^2(\gC_1)}} \int_{\{t = -\tau^2\}}w_r^2G \leq C_\tau r^{2\delta},
    \end{equation}
    where in the last inequality we have used $\int_{\{t = -\tau^2\}}w_r^2G = H(\tau r, w) \leq C_\tau r^{2\lambda^*}$. Taking the limit $r \downarrow 0$ in \eqref{eq:contradiction_with_H} we get

    \begin{equation}
        \label{eq:contradiction_with_q}
        \int_{\{t = -\tau^2\} \cap \gC_1}q^2G = 0,
    \end{equation}

    which is a contradiction since, by construction, ${\|q\|_{L^2(\gC_1)}} = 1$.
\end{proof}

\smallskip
\begin{proposition}
\label{prop:lamb=gam}
Let $u: B_1 \times (-1, 1) \rightarrow [0, \infty)$ be a bounded solution of \eqref{eq:stefan_problem}, $(0, 0) \in \Sigma_{n-1}$. Let $\lambda^*$ be given by \autoref{def:lambdas}. For any $\gamma \in (2, 3)$ it verifies $\lambda^* = \gamma$.
\end{proposition}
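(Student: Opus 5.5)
The plan is to argue by an iterative bootstrap on the admissible range of $\gamma$, as outlined in the introduction. At each stage I know $\lambda^*=\gamma$ for all $\gamma$ in some interval $(2,\gamma_k)$, and I use this to enlarge the interval.

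\emph{Step 1: exclude $\lambda^*<\gamma$ in a given admissible range.} Fix $\gamma$ for which the almost-monotonicity of \autoref{lem:frequency_formula_monotonicity} (or of \autoref{lem:frequency_formula_monotonicity_v2}) is available, and suppose $\lambda^*<\gamma$. By \autoref{lem:convergence_to_q}, the normalized rescalings $\tilde w_r$ converge to a nonzero $\lambda^*$-homogeneous solution $q$ of the parabolic Signorini problem. Its thin obstacle is the hyperplane $\{x_n=0\}=\{p_2=0\}$, and $\partial_t q\ge 0$. Since $\lambda^*\ge 2$ by \autoref{lem:freq_direct_bound}, and $\lambda^*<\gamma<3$, I will invoke the classification of homogeneous global solutions to parabolic Signorini with $\partial_t q\geq 0$. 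This classification is used in \cite{FRS24} and rests on the frequency gap: admissible homogeneities lie in $\{3/2\}\cup\{2,3,\dots\}$ plus odd-type values $2m-1/2$. So the only possibility in $[2,3)$ is $\lambda^*=2$. In that case $q$ is a $2$-homogeneous caloric polynomial that is nonnegative on $\{x_n=0\}$.

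To rule this out I plan to use that $p_2$ is the blow-up at every scale. The Monneau-type formula \autoref{lem:monotonicity_H}, applied with $p=p_2+\epsilon q$ (suitably projected into $\mathcal P$), together with the orthogonality coming from $W(0^+,\cdot)=0$, should show that the $2$-homogeneous component of $w$ decays faster than $H^{1/2}$. That contradicts $\lambda^*=2$, in the same way as the corresponding step in \cite{FRS24}. Hence $\lambda^*=\gamma$.

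\emph{Step 2: the base case.} Step 1, combined with \autoref{lem:frequency_formula_monotonicity}, gives $\lambda^*=\gamma$ for every $\gamma\in(2,5/2)$. Then \autoref{lem:bound_for_saturated_frequency} yields $\|w_r\|_{L^2(\gC_1)}\le Cr^{\gamma-\delta}$ for all $\gamma<5/2$ and $\delta>0$. Plugging this into \autoref{lem:L2_controls} gives $\|w_r\|_{L^\infty(\gC_1)}\le C(r^{5/2-\delta}+r^3)$, that is, the growth hypothesis with $\alpha=1/2-\delta'$.

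\emph{Step 3: the iteration.} With the hypothesis $\|w_r\|_{L^\infty}\le Cr^{2+\alpha}$ in hand, \autoref{lem:freq_direct_bound_v2} and \autoref{lem:frequency_formula_monotonicity_v2} give an integrable error $r^{4+\alpha-2\gamma}$ exactly when $\gamma<(5+\alpha)/2$. The hypotheses of \autoref{lem:FRS_5.6} then hold, so \autoref{cor:H_assymptotic_bhvr}, \autoref{lem:H_leq_w}, \autoref{lem:w_and_H_comparable} and \autoref{lem:convergence_to_q} all go through verbatim (Remark~\ref{rmk:extending_definition_lambdas}). Step 1 therefore gives $\lambda^*=\gamma$ for $\gamma<(5+\alpha)/2$, and hence a new exponent $\alpha'=(1+\alpha)/2-\delta$. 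The map $\alpha\mapsto(1+\alpha)/2$ has fixed point $1$, so the exponents $\alpha_k\uparrow 1$. After finitely many steps every $\gamma<3$ is covered, and the cap $r^3$ in \autoref{lem:L2_controls} never obstructs the argument since $\gamma<3$.

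The main obstacle I expect is Step 1: justifying the Signorini classification in the range $[2,3)$ under only $\partial_t q\ge0$, and above all excluding $\lambda^*=2$. This is where the specific structure of $\Sigma_{n-1}$ and the freedom in choosing $p\in\mathcal P$ must be used. The iteration itself is routine bookkeeping.
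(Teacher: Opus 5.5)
Your proposal is essentially the paper's proof. The shared steps are: blow up to a $\lambda^*$-homogeneous parabolic Signorini solution; use the absence of homogeneities in $(2,3)$ (the paper takes exactly this gap from \cite[Theorem 3.1]{Col25}, and you need only this gap, not a full list of admissible homogeneities); reduce to $\lambda^*=2$ and exclude it with the Monneau-type formula; and bootstrap $\alpha\mapsto(1+\alpha)/2$, which is the paper's step $\gamma_2<(3+\gamma_1-\delta)/2$ giving $\gamma_k<3-2^{-k}$. For excluding $\lambda^*=2$, $p_2+\epsilon q$ ``suitably projected'' is too vague, since $p_2+\epsilon q$ need not lie in $\mathcal P$ (e.g.\ $q=x_1x_n$); instead test the Monneau inequality against every $p\in\mathcal P$ as the paper does, obtaining $\int_{\{t=-1\}}q\,(p_2-p)\,G\ge 0$ for all $p\in\mathcal P$, and for a $2$-homogeneous caloric polynomial $q\ge 0$ on $\{x_n=0\}$ these inequalities force $q\equiv 0$.
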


\begin{proof}
    We will divide the proof in two steps. In the first one we will prove the claim for the case $\gamma \in \left(2, \frac{5}{2} \right)$ and in the second we will use an iterative argument. 

        \par 
        \noindent\textbullet\ \textit{Step 1.} Take $\gamma_1 \in \left(2, \frac{5}{2} \right)$ and assume $\lambda_1^* < \gamma_1$ by \autoref{lem:convergence_to_q} $\tilde{w}_r$ converges to a $\lambda_1^*$-homogeneous solution of the Signorini problem. By \cite[Theorem 3.1]{Col25} there are no homogeneous solutions of the Signorini problem with homogeneity strictly between 2 and 3, then $\lambda^*_1 = 2$. However next show that this case is also not possible.

        Since $\gamma_1 \in \left(2, \frac{5}{2} \right)$ the almost-monotonicity of the frequency formula \autoref{lem:frequency_formula_monotonicity} holds and then, by \autoref{lem:monotonicity_H} $H(r, w)/r^4 +Cr$ is non-decreasing. Then, set $w := u - p_2$ and $\tilde{w}_r = \frac{w_r}{H(r, w \zeta)^{1/2}}$ 
        $$
        \frac{1}{r^4} \int_{\{t = -1\}} \left((u-p)_r\zeta_r\right)^2G + Cr\geq \lim_{r \downarrow 0} \left( \frac{H(r, w)}{r^4} +Cr \right) = \int_{\{t = -1\}} (p_2-p)^2G,
        $$
        and
        $$
        Cr + \int_{\{t = -1\}} \left( \left( \frac{w_r}{r^2} + p_2 -p\right)\zeta_r \right)^2G = Cr+ \int_{\{t = -1\}} \left( \left( \frac{u_r - p_r}{r^2}\right)\zeta_r \right)^2G \geq  \int_{\{t = -1\}} (p_2-p)^2G.
        $$
        Define $\tilde{\eps}_r := \frac{H(r, w \zeta)^{1/2}}{r^2}$, note that $\tilde{\eps}_r \to 0$ as $r \to 0$ this is only verified for $w = u - p_2$, then
        $$
        Cr + \int_{\{t = -1\}} (\tilde{\eps}_r \tilde{w}_r + p_2 - p)^2\zeta_r^2G \geq \int_{\{t = -1\}} (p_2 - p)^2G, 
        $$
    
        expanding terms we obtain
        \begin{equation}
            \label{eq:eps_formula_orth}
            \tilde{\eps}_r^2\int_{\{t = -1\}} (\tilde{w}_r\zeta_r)^2G + 2 \tilde{\eps}_r \int_{\{t = -1\}} \tilde{w}_r(p_2 - p)\zeta_r^2G + Cr\geq 0.
        \end{equation}
    
        Dividing \eqref{eq:eps_formula_orth} by $\tilde{\eps}_r$ and taking $r \downarrow0$, since $r/\tilde{\eps}_r \to 0$ for $\lambda^*_1 < \gamma_1$ (recall \autoref{cor:H_assymptotic_bhvr}) we get
        \begin{equation}
            \label{eq:orthogonality}
            \int_{\{t = -1\}} q(p_2 - p)G \geq 0,
        \end{equation}
        since there are no solutions to the parabolic Signorini problem verifying \eqref{eq:orthogonality} we conclude $\lambda^*_1 > 2$. 
        
        Hence, $\lambda^*_1 = \gamma_1$.
    
    \par 
    \noindent\textbullet\ \textit{Step 2.} Take $\gamma_1 \in \left(2, \frac{5}{2} \right)$, by \textit{Step 1} we know $\lambda^*_1 = \gamma_1$, then by \autoref{lem:frequency_formula_monotonicity} and the estimates from \autoref{lem:cubic_estimates} the inequalities
        $$
        \diff{}{r} \phi^{\gamma_1}(r, w) \geq \frac{2}{r} \frac{(2r^2 \langle w, \bH w\rangle_r)^2}{(H(r, w) + r^{2\gamma_1})^2} -Cr^{\eps_1 - 1} \qquad \text{and} \qquad \frac{2r^2\langle w, \bH w \rangle_r}{H(r, w) + r^{2\gamma_1}} \geq -Cr^{\eps_1}
        $$
    
        are fulfilled with $\eps_1 = 5 -2\gamma_1 > 0$. Therefore we can use \autoref{lem:FRS_5.6} to obtain
        $$
        H(r, w)\leq Cr^{2\lambda^*_1}.
        $$
        and, by \autoref{lem:bound_for_saturated_frequency} we get
        $$
        \|w_r\|_{L^2(\gC_1)} \leq Cr^{\lambda^*_1 - \delta},
        $$
        which implies 
        $$
        \|w_r\|_{L^\infty(\gC_1)} \leq Cr^{\lambda^*_1 - \delta}
        $$
    for any $\delta$ is arbitrarily small. With the previous bound we fulfill the extra hypothesis of \autoref{lem:freq_direct_bound_v2} and \autoref{lem:frequency_formula_monotonicity_v2} with $\alpha = \lambda^*_1 - \delta -2$. Then, for any $\gamma_2 \in (2, (3 + \gamma_1 - \delta)/2)$ we have
    $$
    \phi^{\gamma_2}(r, w) \geq 2 - Cr^{\eps_2} \quad \text{and} \quad \diff{}{r}\phi^{\gamma_2}(r, w) \geq  \frac{2}{r} \left(\frac{2r^2 \langle w, \bH w\rangle_r}{H(r, w) + r^{2\gamma_2}}\right)^2 -Cr^{\eps_2 - 1}
    $$
    with $\eps_2 = 3 + \gamma_1 - \delta - 2\gamma_2$, which is positive as long as $\gamma_2 \leq \frac{3 + \gamma_1 - \delta}{2}$. Hence, \autoref{lem:FRS_5.6} holds. Arguing as in Step 1, since our reasoning is valid for any $\gamma_1 \in \left(2, \frac{5}{2} \right)$ and $\delta$ is arbitrarily small we conclude that for any $\gamma_2 \in (2, \frac{11}{4})$ the frequency formula $\phi^{\gamma_2}(r, w)$ is saturated, i.e. $\lambda^*_2 = \gamma_2$.

    Repeating the argument we can extend the upper bound for any $ 2 < \gamma_k < 3 - 2^{-k}$ where $k \geq 1$. Then, for any $\gamma \in (2, 3)$ choosing $k_0$ such that $\gamma_{k_0 - 1}<\gamma \leq \gamma_{k_0}$ we obtain $\lambda^* = \gamma$.
\end{proof}

\smallskip
\section{Proof of main result}
\label{sec:proof_of_main_thm}
In this section, we will prove \autoref{thm:main_theorem}. To do so we follow a similar strategy that in Section 8 of \cite{FRS24}. We will show that for sufficiently small $\eps > 0$, there exists $C_\eps$ such that for all $(x_0, t_0) \in \Sigma$ it holds
\begin{equation}
    \label{eq:quadratic cleaning}
    \Sigma \cap \left\{ |x - x_0| \leq r, \ t-t_0 \geq C_\eps r^{2 - \eps}\right\} = \emptyset.
\end{equation}

The previous result combined with a technical GMT lemma will allow us to prove the main theorem. To prove \eqref{eq:quadratic cleaning} we first need the following lemma.

\smallskip
\begin{lemma}[{\cite[Lemma 8.1]{FRS24}}]
    \label{lem:quadratic_cleaning}
    Let $u: B_1 \times (-1, 1) \rightarrow [0, \infty)$ be a bounded solution of \eqref{eq:stefan_problem}, $(0, 0)$ a singular point. Assume $H(r, u - p_2)^{1/2} \leq \omega(r)$, where $\omega(r) = o(r^2)$ as $r\downarrow0$, $\textbf{e}_n$ is an eigenvector of $D^2p_2$ with maximal eigenvalue and that there exists $c > 0$ such that
    $$
    \fint_{B_r \cap \{|x_n| \geq \frac{r}{10}\}} \partial_tu(\cdot, -r^2) \geq c r^\beta \qquad \text{for all } r \in (0, r_0), \text{ for some } \beta \in (0, 1].
    $$
    Then, there exists a constant $C$ such that for all $r$ small enough we have
    $$
    \{u = 0\} \cap \left(\{B_{r/2} \times [C\omega(r)r^{-\beta} + r^2, 1] \} \right) = \emptyset.
    $$
\end{lemma}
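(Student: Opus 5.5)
We argue by a barrier/comparison argument in the spirit of \cite[Lemma 8.1]{FRS24}. The plan is to show that, at time $t\ge C\omega(r)r^{-\beta}+r^2$, the function $u$ is strictly positive in $B_{r/2}$. Positivity is gained in two ways: the quantitative growth of $\partial_t u$ away from the hyperplane $\{x_n=0\}$, and the closeness of $u(\cdot,-r^2)$ to $p_2$.

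\textbf{Step 1: $u(\cdot,-r^2)$ is close to $p_2$.} From $H(r,u-p_2)^{1/2}\le\omega(r)$ and the $L^2\to L^\infty$ control of \autoref{lem:L2_controls}, we get
$$\|u(\cdot,-r^2)-p_2\|_{L^\infty(B_r)}\le C(\omega(r)+r^3).$$
To pass from the Gaussian $L^2$ norm on a time slice to the cylinder norm $\|w_r\|_{L^2(\gC_2)}$, we use the Monneau-type monotonicity \autoref{lem:monotonicity_H} at comparable scales. The term $r^3$ can be absorbed into $\omega$ by redefining $\omega(r)\mapsto\omega(r)+r^3$. Since $p_2\ge c\,x_n^2$ with $c>0$, we obtain
$$u(\cdot,-r^2)\ge c x_n^2-C\omega(r)\quad\text{on } B_r.$$

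\textbf{Step 2: $u$ grows in time on the region away from the hyperplane.} We use the hypothesis on $\partial_t u$. Since $\partial_t u\ge0$ and $\partial_t u$ is supercaloric where $u>0$, the function $\partial_t u$ satisfies a weak Harnack inequality on the region $\{|x_n|\ge r/10\}$, where $u\ge cr^2-C\omega(r)>0$ for $r$ small, so $u>0$ there. The average bound then gives
$$\partial_t u\ge c r^\beta\quad\text{on } B_{3r/4}\cap\{|x_n|\ge r/8\}\times[-r^2/2,\,1].$$
Since $\partial_t u$ is positive and the operator $\partial_t u\mapsto$ its future values propagates positivity forward, this bound holds forward in time. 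Integrating in time, for $t\ge -r^2+C\omega(r)r^{-\beta}$ we have
$$u(\cdot,t)\ge C\omega(r)\quad\text{on that region.}$$

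\textbf{Step 3: barrier argument near the hyperplane.} Near $\{x_n=0\}$ we compare $u$ with the barrier
$$\psi(x,t)=\tfrac{f(0)}{4}\bigl(x_n^2-\tfrac{1}{n}|x'-y'|^2\bigr)+\kappa\,(t-t_*).$$
Its parameters are chosen so that $\bH\psi<f\,\chi$ in the relevant set, $\psi\le u$ on the parabolic boundary of the box $B_{r}\times[t_*,\,\cdot\,]$, and $\psi(y,t)>0$ at the center. On the lateral parts $\{|x_n|\ge r/8\}$ we use Step 2, and on the bottom we use Step 1. The standard comparison for the obstacle problem (i.e., $u\ge\psi$ wherever $\psi>0$ forces positivity) then gives $u(y,t)>0$ for all $y\in B_{r/2}$ and $t\ge C\omega(r)r^{-\beta}+r^2$.

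The main obstacle is Step 3: matching the scales $\omega(r)$, $r^\beta$ and $r^2$. The positivity gained in time, $cr^\beta(t-t_*)$, must dominate the deficit $C\omega(r)$ coming from Step 1 together with the $O(r^3)$ errors from $f$ not being constant. We handle this by taking $\kappa\sim r^\beta$ and using $f(x)\ge f(0)-Lr$ to absorb the Lipschitz error. We use the monotonicity $\partial_t u\ge0$ to propagate positivity forward up to time $1$.
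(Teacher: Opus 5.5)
The paper gives no argument for this statement; it imports it from \cite[Lemma 8.1]{FRS24}. Judged on its own, your sketch has two genuine gaps, in Steps 2 and 3.

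\emph{Step 2.} The claim $\partial_t u\ge c r^\beta$ on $(B_{3r/4}\cap\{|x_n|\ge r/8\})\times[-r^2/2,1]$ is not justified. $\partial_t u$ is caloric only inside $\{u>0\}$; across the contact set it is merely subcaloric. A (weak) Harnack inequality in a region of size $r$ carries the slice average at $t=-r^2$ forward only over a time window of length comparable to $r^2$. For $t+r^2\gg r^2$ the constants deteriorate exponentially (think of Harnack chains, or the Dirichlet heat kernel of that region). ``Positivity propagates forward'' is only a qualitative statement.

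This is where the lemma has its content. To gain $C\omega(r)$ on the faces you need the rate $cr^\beta$ for a time $\omega(r)r^{-\beta}$. In the application in \autoref{lem:cuad_cleaning_S3_n-1} ($\omega(r)=r^{3-\eps}$, $\beta=1$) this time is $r^{2-\eps}\gg r^2$, whereas the single-scale bound only gives a gain of order $r^{2+\beta}\ll\omega(r)$. Some multi-scale input is needed: the hypothesis holds at every scale, so one would use it at scales $\rho\sim\sqrt{t+r^2}$ and transport the lower bound from height $\rho$ down to height $r$ inside $\{u>0\}$. Your proposal does not supply this.

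There is also a smaller issue. $B_r\cap\{|x_n|\ge r/10\}$ has two components, so the averaged hypothesis controls only one side of $\{x_n=0\}$, while your barrier uses the gain on both faces. This part can be repaired by adding a tilt $kx_n$ to the barrier.

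\emph{Step 3.} The comparison goes the wrong way. Since $u$ solves $\min\{f-\bH u,\,u\}=0$, a lower barrier must be a subsolution: $\bH\psi\ge f$ on $\{\psi>0\}$, so that $\bH(u-\psi)\le 0$ there and the minimum principle applies. You require $\bH\psi<f\chi$, and your $\psi$ has $\bH\psi=\frac{f(0)}{2n}-\kappa<f$, so no conclusion $u\ge\psi$ follows.

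The bottom inequality is not available either. At $t_*$ you only know $u\ge p_2-C\omega(r)$, so $u$ may vanish at points with $x'=y'$ and $0<|x_n|\lesssim\omega(r)^{1/2}$, where $\psi(x,t_*)=\frac{f(0)}{4}x_n^2>0$.

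A working barrier must contain all of $p_2$, so that its Laplacian can reach $\sup_{B_r}f$. For example, take $\psi=p_2+\frac\eta2x_n^2-\frac b2|x'-y'|^2-M+\kappa(t-t_*)$ on a slab $\{|x_n|<\rho\}$ with $\rho\simeq r/\sqrt n$, and choose:
\begin{itemize}
\item $\eta=Lr+(n-1)b+\kappa$, so that $\bH\psi\ge\sup_{B_r}f$;
\item an initial offset $M\simeq Lr^3+\kappa r^2+\omega(r)$;
\item $b\simeq M/r^2$;
\item $\kappa\le cr^\beta$, the gain rate on the faces.
\end{itemize}
Positivity at $(y,t)$ then needs $\kappa(t-t_*)>M$. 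This is exactly the waiting time $C\omega(r)r^{-\beta}+r^2$ in the statement, and it explains the $+r^2$. But it works only if the gain on the faces $\{|x_n|=\rho\}$ persists over the whole window, which is precisely the bound missing from Step 2.

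A minor point on Step 1: the Monneau formula is the wrong tool there. Its additive error gives $H(\rho)\le H(r)+Cr^5$ for $\rho\in(r/2,r)$, i.e.\ an error $r^{5/2}\gg r^{3-\eps}$. Instead, use the hypothesis directly at scales $\rho\in[r,\sqrt5\,r]$ together with interior estimates on $B_{2r}\times(-5r^2,-r^2)$. The leftover $r^3$ is harmless because $r^{3-\beta}\le r^2$.
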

We will use the previous lemma in two different cases:

\begin{itemize}
    \item If $(x_0, t_0) \in \Sigma_m$ for $m \leq n-2$, then the expansion in \eqref{eq:intro_basic_expansion} is tight. However, we can obtain the quadratic cleaning directly using $w(r) = o(r^2)$ and $\beta = \eps$.

    \item If $(x_0, t_0) \in \Sigma_{n-1}$ we can improve the expansion \eqref{eq:intro_basic_expansion} and take $w(r) = r^{3 - \eps}$ and $\beta = 1$.
\end{itemize}

For the case $(x_0, t_0) \in \Sigma_m$ we need the following auxiliary lemma:
\smallskip
\begin{lemma}[{\cite[Lemma 5.8]{FRS24}(see also \cite{Tor24})}]
    \label{lem:FRS_5.8_claim}
    Let $L$ be a linear subspace. For any $\delta > 0$, define
    $$
    \cC_{\delta} := \left\{ (x, t) \in \R^n\times(-\infty, 0): \text{dist}(x, L) \geq \delta\left(|x| + |t|^{1/2} \right) \right\}.
    $$
    
    For any $\epsilon > 0$ there exists $\delta$ such that the following holds: there exists $N \in (0, \epsilon)$ and a positive $N$-homogeneous function $\Phi:\R^n \times (-\infty, 0) \to (0, +\infty)$ such that
    $$
    \begin{cases}
        \bH \Phi = 0 & \text{in } \cC_\delta \\
        \Phi = 0 & \text{on } \partial\cC_\delta.
    \end{cases}
    $$
\end{lemma}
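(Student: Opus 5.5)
The plan is to build $\Phi$ by separation of variables on the parabolic cone $\cC_\delta$, reducing the claim to an eigenvalue problem on a spherical cap, and then to show the eigenvalue tends to zero as $\delta \downarrow 0$.

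\emph{Reduction.} Any $N$-homogeneous caloric function in the parabolic dilation sense can be written as
$$
\Phi(x,t) = (-t)^{N/2}\,\psi\bigl(x/\sqrt{-t}\bigr).
$$
Then $\bH \Phi = 0$ is equivalent to the Ornstein--Uhlenbeck equation
$$
\Delta \psi - \tfrac{y}{2}\cdot\nabla\psi + \tfrac{N}{2}\psi = 0
$$
on the cross-section $\Omega_\delta := \{y \in \R^n : (y,-1) \in \cC_\delta\}$. Since
$$
\Omega_\delta = \{y : \mathrm{dist}(y,L) \ge \delta(|y|+1)\},
$$
the boundary condition $\Phi = 0$ on $\partial\cC_\delta$ becomes $\psi = 0$ on $\partial\Omega_\delta$. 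So we need a positive Dirichlet eigenfunction of the operator $-\Delta + \frac{y}{2}\cdot\nabla$ in $L^2(\Omega_\delta, G\,dy)$ with principal eigenvalue $\mu_\delta = N/2$.

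\emph{Existence and positivity.} The operator is self-adjoint with respect to the Gaussian measure $G(\cdot,-1)\,dy$. The embedding $H^1_0(\Omega_\delta, G) \hookrightarrow L^2(G)$ is compact by the Gaussian Poincaré/log-Sobolev inequality. Hence a principal eigenvalue $\mu_\delta > 0$ exists, with a positive eigenfunction: minimize the Rayleigh quotient, note that $|\psi|$ is also a minimizer, and apply the strong maximum principle. Positivity of $\mu_\delta$ holds because $\Omega_\delta \ne \R^n$ (when $L \ne \R^n$), so constants are excluded. Setting $N := 2\mu_\delta$ then gives a positive $N$-homogeneous $\Phi$ that is caloric in $\cC_\delta$ and vanishes on its boundary.

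\emph{Main step: $\mu_\delta \to 0$ as $\delta \to 0$.} This is where I expect the difficulty. I would bound the Rayleigh quotient using test functions. The set removed from $\R^n$ lies in the region $\{\mathrm{dist}(y,L) < \delta(|y|+1)\}$, a shrinking neighbourhood of the subspace $L$, where $\dim L \le n-1$.

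A first attempt is $\psi = \eta_\delta(\mathrm{dist}(y,L))$, where $\eta_\delta$ is a cutoff equal to $0$ near $0$ and $1$ away from it. In the codimension-one case this costs $\sim 1/\delta$ in energy, which is too much. I would instead use a logarithmic-type profile, or one of the form
$$
\eta\bigl(\mathrm{dist}(y,L)/(\delta(1+|y|))\bigr)^{s}
$$
with $s$ small. This mimics the elliptic fact that harmonic functions vanishing on a thin cone around $L$ have homogeneity tending to $0$: for codimension one, the cap $\{\theta > \delta\}$ has first Dirichlet eigenvalue $\sim \delta$.

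A cleaner route is to write $y = (y', y'')$ with $y'' \in L^\perp$, and use test functions depending on $|y''|/(1+|y|)$ built from the elliptic homogeneous solutions of small degree. In codimension one, take $\psi = (|y_n| - \delta(1+|y|))_+^{\,s}$-type profiles and optimize over $s$. Then
$$
\int |\nabla\psi|^2 G \Big/ \int \psi^2 G \to 0 \quad \text{as } \delta \to 0,
$$
which gives $N = 2\mu_\delta < \epsilon$ for $\delta$ small. Monotonicity of $\mu_\delta$ in the domain lets us reduce everything to the largest admissible $L$, namely codimension one.

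The remaining checks are routine. Regularity of $\Phi$ up to $\partial\cC_\delta$ away from the vertex follows from Lipschitz-domain boundary regularity. Homogeneity is built into the ansatz.
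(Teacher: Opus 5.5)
Your reduction is sound: you rewrite the problem as a principal Dirichlet eigenvalue of $-\Delta+\frac{y}{2}\cdot\nabla$ in $L^2(\Omega_\delta,G\,dy)$ with $N=2\mu_\delta$, and your existence and positivity argument works. The paper gives no proof of this lemma and only cites \cite[Lemma 5.8]{FRS24}. The gap is in your main step. You reduce to codimension one and claim the Rayleigh quotient tends to $0$ there. That is false, and the lemma itself fails for hyperplanes. If $L=\{y_n=0\}$, then $\Omega_\delta\subset\{y_n\neq0\}$. On each half-space the principal eigenvalue is $\frac12$, with eigenfunction $y_n$, i.e.\ $\Phi=x_n$ and $N=1$. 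By domain monotonicity $\mu_\delta\ge\frac12$, so $N\ge1$ for every $\delta$, and $N\to1$ as $\delta\downarrow0$. No exponent $s$ in $(|y_n|-\delta(1+|y|))_+^{s}$ can beat this. Your elliptic heuristic is also off. Deleting a thin band around an equator of $\mathbb{S}^{n-1}$ leaves two caps converging to hemispheres, whose first eigenvalue tends to $n-1$, so the homogeneity tends to $1$. Small homogeneity comes from the removed set having vanishing capacity in the limit, which needs $\operatorname{codim}L\ge2$. That is exactly how the paper uses the lemma: $L=\{p_2=0\}$ at points of $\Sigma_m$ with $m\le n-2$. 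So the statement must be read with $\dim L\le n-2$, and your monotonicity reduction should go to $\dim L=n-2$, not $n-1$.

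With that correction, the logarithmic profile you mention does the job. Write $y=(y',y'')$ with $y''\in L^\perp\cong\R^2$ and set $s=|y''|/(1+|y|)<1$. Let $\psi=\eta(s)$, where
\begin{itemize}
\item[] $\eta=0$ on $[0,2\delta]$,
\item[] $\eta(s)=\log(s/2\delta)/\log\bigl(1/(2\sqrt\delta)\bigr)$ on $[2\delta,\sqrt\delta]$,
\item[] $\eta=1$ on $[\sqrt\delta,1)$.
\end{itemize}
Then $\psi$ vanishes in a neighbourhood of $\partial\Omega_\delta=\{s=\delta\}$. Moreover $|\nabla s|\le 2/(1+|y|)$, so $|\nabla\psi|^2\lesssim 1/\bigl(\log^2(1/\delta)\,|y''|^2\bigr)$ on the transition region. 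That region lies in $\{2\delta(1+|y'|)\le|y''|\le2\sqrt\delta(1+|y'|)\}$. Integrating first in $y''\in\R^2$ gives a factor $\pi\log(1/\delta)$, hence $\int|\nabla\psi|^2G\lesssim1/\log(1/\delta)$, while $\int\psi^2G\to1$. Therefore $N=2\mu_\delta\lesssim 1/\log(1/\delta)\to0$, and $N>0$ as you argued.
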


\smallskip
\begin{lemma}
    \label{lem:cuad_cleaning_Sm}
    Let $u: B_1 \times (-1, 1) \rightarrow [0, \infty)$ be a bounded solution of \eqref{eq:stefan_problem} and $(0, 0) \in \Sigma_m$ with $m \leq n - 2$. Then, for any $\eps > 0$ there exists $r_\eps > 0$ such that
    $$
    \{u=0 \} \cap \left(B_r \times [r^{2-\eps}, 1) \right) = \emptyset
    $$
    for all $r \in (0, r_\eps)$.
\end{lemma}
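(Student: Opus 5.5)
The natural route is \autoref{lem:quadratic_cleaning} with $\omega(r)=o(r^2)$ and $\beta=\eps$. Then the conclusion reads $\{u=0\}\cap(B_{r/2}\times[C\omega(r)r^{-\eps}+r^2,1])=\emptyset$. Since $\omega(r)r^{-\eps}=o(r^{2-\eps})$, for $r$ small we have $C\omega(r)r^{-\eps}+r^2\le (r/2)^{2-\eps}$. Replacing $r$ by $2r$ (and adjusting $r_\eps$) gives the statement. The hypothesis $H(r,u-p_2)^{1/2}\le\omega(r)=o(r^2)$ is immediate from the blow-up expansion \eqref{eq:intro_basic_expansion} at singular points, using $C^{1,1}_x\cap C^{0,1}_t$ bounds to control the Gaussian tails (after the cutoff $\zeta$; the exponential errors are negligible). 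We may choose coordinates so that $\textbf{e}_n$ is an eigenvector of $D^2p_2$ with maximal eigenvalue. The whole difficulty is therefore the positivity estimate
$$
\fint_{B_r\cap\{|x_n|\ge r/10\}}\partial_t u(\cdot,-r^2)\ \ge\ c\,r^{\eps}\qquad\text{for all small } r.
$$

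To prove this estimate I would argue by contradiction via blow-up, using the barrier from \autoref{lem:FRS_5.8_claim}. Let $L=\{p_2=0\}$, which has dimension $m\le n-2$. The function $\partial_t u\ge0$ satisfies $\bH(\partial_t u)\le 0$ in $\{u>0\}$: differentiating in $t$ gives $\partial_t(f\chi_{\{u>0\}})\ge0$ because the positivity set grows in time and $f$ does not depend on $t$ (if $f$ depends on $t$, there is a Lipschitz error $O(1)$, which is absorbed below). So $\partial_t u$ is a nonnegative supersolution in $\{u>0\}$. Since $u_r/r^2\to p_2$ uniformly, for any $\delta>0$ the cone $\cC_\delta\cap\gC_1$ rescaled at scale $r$ lies in $\{u>0\}$ for $r$ small, because $p_2\ge c\,\mathrm{dist}(x,L)^2>0$ there.

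Next, take $\Phi$ to be the $N$-homogeneous positive caloric function in $\cC_\delta$ with $N<\eps$ and zero boundary values. Comparison by the parabolic maximum principle on $\cC_\delta\cap\gC_{\rho}$ between $\partial_t u$ and $\kappa\Phi$ requires positivity of $\partial_t u$ on the lateral/bottom boundary at one fixed scale $\rho$. This holds: $\partial_t u\not\equiv0$ near $(0,0)$ on the set where $u>0$, so by the strong maximum principle (Harnack) $\partial_t u>0$ in the interior of $\{u>0\}$, hence $\partial_t u\ge\kappa>0$ on the compact portion $\partial_{par}(\cC_\delta\cap \gC_\rho)\setminus\partial\cC_\delta$. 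The maximum principle then gives $\partial_t u\ge\kappa'\Phi$ in $\cC_\delta\cap\gC_\rho$. By homogeneity, $\Phi(x,-r^2)\ge c\,r^{N}$ on $\{|x_n|\ge r/10\}\cap B_r$. Indeed, points with $|x_n|\ge r/10$ satisfy $\mathrm{dist}(x,L)\ge r/10$ because $\textbf{e}_n\perp L$, so they lie in $\cC_\delta$ with a uniform interior margin when $\delta\ll1/10$. This yields the average bound with exponent $N<\eps$, which is at least $c\,r^\eps$.

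The main obstacle I expect is justifying the comparison near the vertex. One must confirm that the cone $\cC_\delta\cap\gC_\rho$ is contained in $\{u>0\}$ at all scales simultaneously (not just along a sequence), and handle the supersolution error if $f$ depends on $t$. For the first issue, I would use the uniform convergence $u(rx,r^2t)/r^2\to p_2$ in $\gC_1$. For points with $\mathrm{dist}(x,L)\ge\delta(|x|+|t|^{1/2})$ we get $u\ge c\delta^2(|x|^2+|t|)-o(|x|^2+|t|)>0$ once $\rho$ is small. For the second issue, I would add a correction $-C(-t)$ or $C|x|^2$ to $\Phi$, which is lower order compared with $r^N$ since $N<2$.
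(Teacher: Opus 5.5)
Your proposal is correct and follows the paper's route: the cone $\cC_\delta$ lies in $\{u>0\}$ at small scales because $u_r/r^2\to p_2$ uniformly, the $N$-homogeneous barrier $\Phi$ of \autoref{lem:FRS_5.8_claim} is compared with $\partial_t u$, homogeneity gives the lower bound $c\,r^N\ge c\,r^\eps$, and \autoref{lem:quadratic_cleaning} is applied with $\omega(r)=o(r^2)$ and $\beta=\eps$. You also check that lemma's hypothesis on the right set $B_r\cap\{|x_n|\ge r/10\}$ at time $-r^2$, whereas the paper averages over $\partial B_r$, and you do the $r\mapsto 2r$ bookkeeping explicitly. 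One small correction: $\partial_t u$ is caloric in the open set $\{u>0\}$ simply because $\chi_{\{u>0\}}\equiv 1$ there and $f=f(x)$. The monotonicity of the positivity set that you invoke would instead give $\bH(\partial_t u)=f\,\partial_t\chi_{\{u>0\}}\ge 0$, i.e.\ that $\partial_t u$ is globally \emph{sub}caloric, which is the wrong direction for a lower barrier. Also, positivity of $\partial_t u$ in $\{u>0\}$ is one of the hypotheses in \eqref{eq:parabolic_obstacle_problem}, so no strong-maximum-principle argument is needed.
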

\begin{proof}
    We will show for $r$ small enough
    \begin{equation}
        \label{eq:botom_stratums_condition}
        \fint_{\partial B_r} \partial_tu(x, t)dx \geq c_\eps r^\eps \qquad \text{for all } t \in (-r^2, 0).
    \end{equation}

    Define $\cC_{\delta}$ as in \autoref{lem:FRS_5.8_claim} and note that (since $r^{-2}u_r -p_2 \to 0$ locally uniformly) for $\delta$ small enough there exists $r_\delta$ such that
    $$
    \bar{\cC_\delta} \cap \partial_\text{par} \gC_{r_\delta} \subset \subset \{u > 0 \}.
    $$
    Since $\partial_tu$ is caloric and positive inside $\{ u > 0\}$ we can use the function $c\Phi$ for some $c> 0$ given by \autoref{lem:FRS_5.8_claim} applied with $\epsilon = \eps$ as a barrier.
    $$
    \begin{cases}
        \bH \Phi = \bH \partial_t u = 0 & \text{in } \gC_{r_\delta} \\
        \partial_tu > c\Phi& \text{on } \partial\gC_{r_\delta},
    \end{cases}
    $$
    by comparison principle $\partial_tu \geq c\Phi$ in $\gC_{r_\delta}$. Hence
    $$
    \fint_{\partial B_r} \partial_tu(x, t)dx \geq c\fint_{\partial B_r} \Phi(x, t)dx = c\fint_{\partial B_1} \Phi(rx, r^2t)dx \geq cr^N\fint_{\partial B_1} \Phi(x, t)dx \geq c_\eps r^\eps 
    $$
    which proves the claim \eqref{eq:botom_stratums_condition}. Applying \autoref{lem:quadratic_cleaning} with $\omega(r) = o(r^2)$ and $\beta = \eps$ the result follows. 
\end{proof}

The result for the case $(x_0, t_0) \in \Sigma_{n-1}$ is the following.
\smallskip
\begin{lemma}
    \label{lem:cuad_cleaning_S3_n-1}
    Let $(0, 0) \in \Sigma_{n-1}$. Assume without loss of generality $p_2 = \frac{f(0)}{2}x_n^2$. Then, for any $\eps > 0$ there exists $r_\eps > 0$ such that
    $$
    \{u=0 \} \cap \left(B_r \times [r^{2-\eps}, 1) \right) = \emptyset
    $$
    for all $r \in (0, r_\eps)$.
\end{lemma}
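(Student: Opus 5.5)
We will apply \autoref{lem:quadratic_cleaning} with $\omega(r) = r^{3-\eps'}$ and $\beta = 1$, where $\eps'$ is small relative to $\eps$. Then $C\omega(r)r^{-\beta} + r^2 \leq Cr^{2-\eps'} + r^2 \leq r^{2-\eps}$ for $r$ small, so the claimed emptiness on $B_{r/2}\times[r^{2-\eps},1)$ follows. Replacing $r$ by $2r$ only changes constants, so we also get it on $B_r$. Two hypotheses therefore have to be verified.

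\emph{The growth bound $H(r,u-p_2)^{1/2}\le Cr^{3-\eps'}$.} By \autoref{prop:lamb=gam}, for every $\gamma\in(2,3)$ we have $\lambda^*=\gamma$. Fix $\gamma = 3-\eps'/2$. Going through the iteration of Step 2 in that proof, the hypotheses of \autoref{lem:FRS_5.6} hold at this $\gamma$, using \autoref{lem:frequency_formula_monotonicity_v2} with the growth rate $\alpha$ obtained at the previous step. Hence \autoref{cor:H_bound_for_saturated_regim} (in its extended form) gives $H(r,(u-p_2)\zeta)\le Cr^{2\gamma}$. The cutoff error between $H(r,u-p_2)$ and $H(r,(u-p_2)\zeta)$ is exponentially small, so it is absorbed. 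This gives $\omega(r)=Cr^{3-\eps'}=o(r^2)$. If needed, \autoref{lem:bound_for_saturated_frequency} together with \autoref{lem:L2_controls} also gives the corresponding $L^\infty$ bound $\|w_r\|_{L^\infty(\gC_1)}\le Cr^{3-\eps'}$.

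\emph{The lower bound on the time derivative.} We must show
$$\fint_{B_r\cap\{|x_n|\ge r/10\}}\partial_t u(\cdot,-r^2)\ge cr$$
for small $r$. Here $\partial_t u\ge0$ and $\partial_t u$ is caloric in $\{u>0\}$. By the expansion above, $u_r/r^2 = p_2 + O(r^{1-\eps'})$, so the region $\{|x_n|\ge r/20\}\cap\gC_{r}$ lies in $\{u>0\}$, since $p_2\ge cr^2$ there dominates the error. I would argue as in \cite[Section 8]{FRS24}. Fix a scale $r_1$ and note that $\partial_t u>0$ at some point of the positivity set. Then propagate the lower bound down to scale $r$ with a barrier in the slab complement $\{|x_n|>\delta(|x|+|t|^{1/2})\}$. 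This is the analogue of \autoref{lem:FRS_5.8_claim} with $L=\{x_n=0\}$ of dimension $n-1$, and there the positive homogeneous caloric function vanishing on the boundary has homogeneity $N$ close to $1$ (essentially $|x_n|$). Comparison as in \autoref{lem:cuad_cleaning_Sm} then yields $\partial_t u\ge c\Phi\ge cr^{1+\eta}$. This gives $\beta=1+\eta$, which is harmless: \autoref{lem:quadratic_cleaning} then yields time $Cr^{3-\eps'-1-\eta}$, which is still at most $r^{2-\eps}$ once $\eps'+\eta<\eps$. The case $\beta>1$ can be handled by checking that the proof of \autoref{lem:quadratic_cleaning} does not use $\beta\le1$, or else by rechoosing $\omega$.

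\emph{Main obstacle.} The hardest step is this barrier argument. We need a positive lower bound for $\partial_t u$ on the boundary of the barrier domain at a fixed scale, uniformly, but $\partial_t u$ could a priori vanish on large parts of $\{u>0\}$. I would first use the strong maximum principle: $\partial_t u>0$ in $\{u>0\}$ by \eqref{eq:parabolic_obstacle_problem}. Since $\overline{\cC_\delta}\cap\partial_{\mathrm{par}}\gC_{r_\delta}$ is compactly contained in $\{u>0\}$, we get $\min\partial_t u>0$ there, exactly as in \autoref{lem:cuad_cleaning_Sm}. The remaining task is to check that the cone $\cC_\delta$ around the hyperplane $L=\{x_n=0\}$ gives homogeneity at most $1+\eta$ as $\delta\to0$. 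I expect this to follow by comparison with the explicit caloric function $|x_n|$ on each half-space, together with the closeness of $\cC_\delta$ to the two half-spaces.
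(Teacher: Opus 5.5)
Your growth bound is handled as in the paper: \autoref{prop:lamb=gam} and \autoref{cor:H_bound_for_saturated_regim} at $\gamma$ near $3$, then \autoref{lem:quadratic_cleaning} with $\omega(r)=r^{3-\eps'}$. The gap is in the lower bound for $\partial_t u$. A barrier in a cone of fixed aperture $\cC_\delta=\{|x_n|\ge\delta(|x|+|t|^{1/2})\}$ can never give $\beta=1$. Write the barrier as $\Phi=(-t)^{N/2}\phi(x/\sqrt{-t})$. Its homogeneity is $N=2\mu_1$, where $\mu_1$ is the first Dirichlet eigenvalue of the Ornstein--Uhlenbeck operator $\Delta-\frac{y}{2}\cdot\nabla$ on the slice $\{y_n>\delta(|y|+1)\}$. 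This slice is strictly contained in the half-space, whose eigenvalue is $1/2$ (eigenfunction $y_n$). By strict domain monotonicity, $N(\delta)>1$ for every $\delta>0$.

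So you only get $\beta=1+\eta$ with $\eta>0$, which is outside the hypothesis $\beta\in(0,1]$ of \autoref{lem:quadratic_cleaning}. Neither of your fixes is carried out. You would have to reprove \cite[Lemma 8.1]{FRS24} for $\beta>1$. ``Rechoosing $\omega$'' cannot help, since $\omega$ enters only through the upper bound on $H$, not the lower bound on $\partial_t u$. A secondary point: comparing with $|x_n|$ on the half-spaces gives $\Phi\lesssim|x_n|$, i.e.\ $N\ge1$, which is the wrong direction. Proving $N(\delta)\to1$ needs continuity of the Gaussian eigenvalue as $\cC_\delta$ exhausts $\{x_n\neq0\}$.

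The missing idea is to use the sharper geometry available in the top stratum, not just containment of the contact set in a thin cone. \autoref{lem:equiv_to_FRS6.6} relies on $\lambda^*=\gamma$ for $\gamma$ near $3$, through \autoref{lem:bound_for_saturated_frequency} and the spatial H\"older bound of \autoref{lem:L_infty_bounds}. It places $\{u=0\}$ near the origin inside $\{|x_n|\le C(|x'|^2-t)^{1-\eps/2}\}$. This set is tangent to $\{x_n=0\}$ to order $2-\eps>1$ in the parabolic metric, so its complement is a parabolic $C^{1,\alpha}$ domain. The paper applies the parabolic Hopf lemma \cite{AN19} there to the positive caloric function $\partial_t u$ (\autoref{lem:aux_bound_cleaning_parabolic_top_stratum}). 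This gives $\partial_t u\ge c\,\text{dist}_{\text{par}}(\cdot,\{u=0\})\ge cr$ on $B_r\cap\{|x_n|\ge r/10\}$ at $t=-r^2$. That is exactly $\beta=1$, so \autoref{lem:quadratic_cleaning} applies as stated. Your cone barrier uses only $o(1)$-flatness of the contact set. Without a Dini-type rate of flatness, a Hopf-type linear lower bound is not available, which is why you lose the exponent $\eta$.
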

In order to prove it we will need two auxiliary results.
\smallskip
\begin{lemma} 
\label{lem:equiv_to_FRS6.6} Let $u: B_1 \times (-1, 1) \rightarrow [0, \infty)$ be a bounded solution of \eqref{eq:stefan_problem}, $(0, 0) \in \Sigma_{n-1}$. Assume without loss of generality $p_2 = \frac{f(0)}{2}x_n^2$. Set $w := (u - p_2)$. Then for any $\eps > 0$ there exists $r_0 > 0$ such that 

    $$
    \{u(\cdot, t) = 0\} \cap B_r \subset \left\{ |x_n| \leq Cr^{2 - \eps}\right\}
    $$
    for all $r \in (0, r_0)$ and $t \geq -r^2$.
\end{lemma}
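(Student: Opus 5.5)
The plan is to combine the improved expansion coming from Proposition~\ref{prop:lamb=gam} with the fact that $p_2=\frac{f(0)}{2}x_n^2$ is large away from $\{x_n=0\}$. If $u$ is uniformly close to $p_2$ at scale $r$, then $u$ can only vanish where $p_2$ is small, which forces $|x_n|$ to be small.

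First we turn the saturation of the frequency into a pointwise bound. Given $\eps>0$, choose $\gamma\in(2,3)$ with $\gamma>3-\eps/2$. By Proposition~\ref{prop:lamb=gam} we have $\lambda^*=\gamma$. The iteration in Step 2 of that proof shows that the hypotheses of Lemma~\ref{lem:FRS_5.6} hold for this $\gamma$, using the variants Lemma~\ref{lem:freq_direct_bound_v2} and Lemma~\ref{lem:frequency_formula_monotonicity_v2} with the growth exponent $\alpha$ obtained at the previous stage. Hence Corollary~\ref{cor:H_bound_for_saturated_regim} (in its extended form) gives $H(r,w\zeta)\le Cr^{2\gamma}$. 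Lemma~\ref{lem:bound_for_saturated_frequency} then yields $\|w_r\|_{L^2(\gC_1)}\le Cr^{\gamma-\delta}$, and Lemma~\ref{lem:L2_controls} upgrades this to
\[
\|w_r\|_{L^\infty(\gC_1)}\le C\bigl(r^{\gamma-\delta}+r^3\bigr)\le Cr^{3-\eps/2}.
\]
Applying the same bound at scale $2r$ (or covering with $\gC_2$) controls $|u-p_2|$ on $B_r\times(-r^2,0]$ by $Cr^{3-\eps/2}$.

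Next we use the equation. Fix $(x,t)$ with $|x|<r$, $t\in[-r^2,0]$ and $u(x,t)=0$. Then $p_2(x)=|u(x,t)-p_2(x)|\le Cr^{3-\eps/2}$, so $\frac{f(0)}{2}x_n^2\le Cr^{3-\eps/2}$ and therefore $|x_n|\le Cr^{3/2-\eps/4}$. This is weaker than the claimed bound $|x_n|\le Cr^{2-\eps}$.

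To reach the sharper exponent, we exploit that $u\ge0$ and $u\in C^{1,1}_x$ with a uniform bound on $D^2u$ (Lemma~\ref{lem:constant_simplification_bound}). Since $u$ attains its minimum $0$ at $(x,t)$, we have $\nabla u(x,t)=0$, and so $\partial_n w(x,t)=-f(0)x_n$. We then need an estimate on $\nabla w$ of order $r^{2-\eps}$ on $\gC_r$. Interpolation between $\|w\|_{L^\infty}\le Cr^{3-\eps/2}$ on $\gC_{2r}$ and $\|D^2w\|_{L^\infty}\le C$ gives
\[
|\nabla w|\le C\bigl(\|w\|_\infty\|D^2w\|_\infty\bigr)^{1/2}\le Cr^{3/2-\eps/4},
\]
which is still not enough. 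Instead we use the tangential estimates of Lemma~\ref{lem:L_infty_bounds}, namely that $\|\nabla' w_r\|_{L^\infty}$ and $\|\partial_t w_r\|_{L^\infty}$ are bounded by $Cr^{3-\eps/2}$, together with the equation $\partial_{nn}w=\bH w+\partial_tw-\Delta'w$. In the rescaled variables, $\bH w_r$ is $O(r^3)$ on $\{u>0\}$ and equals $-r^2f(0)$ on $\{u=0\}$. The contact set, rescaled, lies in the thin slab $|x_n|\le Cr^{-1}r^{3/2-\eps/4}$. We plan to integrate $\partial_{nn}w$ along the $x_n$-direction from the slab to $|x_n|\sim1$, where $w_r$ and its derivatives are already small by the $L^\infty$ bound and interior estimates in $\{u>0\}$. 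This should give $|\partial_n w_r|\le Cr^{3-\eps}$ outside a neighborhood of the slab of comparable size, and hence $|\partial_n w|\le Cr^{2-\eps}$ at points adjacent to the contact set. Combined with $|\partial_nw(x,t)|=f(0)|x_n|$, this yields $|x_n|\le Cr^{2-\eps}$.

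The main obstacle is this last step: obtaining a normal-derivative bound of order $r^{2-\eps}$ rather than the $r^{3/2}$ that naive interpolation gives. To overcome it, we would first try a bootstrap on the slab width. Write $s$ for the slab width. Use $\partial_{nn}w_r\ge -Cr^2$ (from $u\ge0$ and semiconvexity in $x_n$ via $D^2u\ge -C$ near free boundary points) together with the smallness of $\Delta'w_r$, which follows from the $C^{0,\alpha}$ control of Lemma~\ref{lem:L_infty_bounds} after differentiating tangentially. Each iteration of the bootstrap improves $s$ from $r^{a}$ to $r^{(a+3-\eps)/2}$, and the fixed point is the exponent $3-\eps$ in the rescaled variable, which corresponds to $r^{2-\eps}$ in the original variables.
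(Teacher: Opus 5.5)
Your first step is sound and matches how the paper starts. Saturation $\lambda^*=\gamma$ for $\gamma$ near $3$, together with \autoref{lem:bound_for_saturated_frequency}, gives $\|w_r\|_{L^2(\gC_2)}\le Cr^{\gamma-\delta}$. You also correctly observe that an $L^\infty$ bound alone only yields $|x_n|\lesssim r^{3/2}$.

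The gap is the upgrade to $r^{2-\eps}$, which you leave as a plan that does not work. Bounding $\partial_n w$ at contact points by integrating $\partial_{nn}w=\bH w+\partial_t w-\Delta' w$ in $x_n$ requires $\Delta' w_r=O(r^{3-\eps})$, and nothing available provides this:
\begin{itemize}
\item A H\"older bound on $w_r$ cannot be ``differentiated tangentially''.
\item With $f$ only Lipschitz, second tangential incremental quotients are not controlled. In $\{u>0\}$, $\bH\,\delta^2_{\mathbf e,h}w_r$ involves $r^2\delta^2_{\mathbf e,h}f_r$, which is only $O(r^3h^{-1})$.
\item So the only pointwise information is $|\Delta' w_r|\le Cr^2$ from $u\in C^{1,1}_x$, which gives nothing after integrating over a unit segment.
\end{itemize}
The slab-width bootstrap is not derived from any estimate, and its exponents are inconsistent between rescaled and original variables. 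Its natural implementation (interior gradient bounds at distance $s$ from the slab, then the $C^{1,1}$ bound to return to the contact point) produces an error $Cr^2s$ of the same order as $f(0)r^2|x_n|$, so it does not close.

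The missing idea is that no normal-derivative bound is needed. \autoref{lem:L_infty_bounds}, through \autoref{prop:cc_argument} (a subharmonic function that is Lipschitz in $n-1$ directions is $C^{0,\alpha}$ in the remaining one for \emph{every} $\alpha<1$), controls the full spatial H\"older seminorm of $w_r$, including the $x_n$-direction. Its statement says ``some $\alpha$'', but the proof gives every $\alpha\in(0,1)$, so $[w_r]_{C^{0,\alpha}_x(\gC_1)}\le C_\alpha(\|w_r\|_{L^2(\gC_2)}+r^3)\le Cr^{3-\delta}$. Now take a contact point $(x,t)$ with $|x|<r$ and compare it with $((x',0),t)$. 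Since $w((x',0),t)=u((x',0),t)\ge0$ and $w(x,t)=-\frac{f(0)}{2}x_n^2$,
\[
\tfrac{f(0)}{2}x_n^2\le [w(\cdot,t)]_{C^{0,\alpha}(B_r)}\,|x_n|^\alpha\le Cr^{3-\delta-\alpha}|x_n|^\alpha .
\]
Hence $|x_n|\le Cr^{(3-\delta-\alpha)/(2-\alpha)}$, and taking $\alpha$ close to $1$ and $\delta$ small gives $|x_n|\le Cr^{2-\eps}$. An exponent $\alpha$ close to $1$ is essential: $\alpha\to0$ only reproduces your $r^{3/2}$. This is the paper's proof. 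The times $t\ge0$ allowed in the statement are handled by the fact that $\{u(\cdot,t)=0\}$ shrinks in $t$, since $\partial_t u\ge0$.
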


\begin{proof}
    Consider $\gamma \in (2, 3)$ fixed, by \autoref{lem:L_infty_bounds} for all $\alpha \in (0, 1)$ we have
    $$
    [w_r]_{C_x^{0, \alpha}(\gC_1)} = r^\alpha [w]_{C_x^{0, \alpha}(\gC_r)} \leq C\left(\|w_r\|_{L^2(\gC_2)} + r^3 \right).
    $$
    Hence, by \autoref{lem:bound_for_saturated_frequency} for any $\delta > 0$ inside of the region $\{u = 0 \} \cap \left( B_r \times \{-r^2 \}\right)$ it holds
    $$
    \frac{f(0)}{2}|x_n|^{2-\alpha} \leq [u - p_2]_{C_x^{0, \alpha}(B_r \times \{-r^2 \})}  \leq Cr^{3 - \delta -\alpha}
    $$
    Taking $\alpha = 1 - \beta$ with $\beta > 0$ we have for any $\eps > 0$
    $$
    |x_n| \leq Cr^\frac{2 - \delta +\beta}{1 + \beta} \leq Cr^{2 - \eps}
    $$
    holds inside of $\{u = 0 \} \cap \left( B_r \times \{-r^2 \}\right)$ taking $\delta$ and $\beta$ small enough. Since the region $\{u = 0 \}$ shrinks with time the claim follows. \end{proof}
\smallskip
\begin{lemma} 
\label{lem:aux_bound_cleaning_parabolic_top_stratum}
Let $u: B_1 \times (-1, 1) \rightarrow [0, \infty)$ be a bounded solution of \eqref{eq:stefan_problem}, $(0, 0) \in \Sigma_{n-1}$. Assume without loss of generality $p_{2} = \frac{f(0)}{2} x_n^2$. Then for any $\eps >0$, there exists some positive constants $\bar{c}$ and $r_\eps$ such that  
    $$
    \fint_{B_r \cap \{|x_n| \geq \frac{r}{10} \}} \partial_t u(\cdot, -r^2) dx \geq \bar{c} r \quad \text{for all } r \in (0, r_\eps).
    $$
\end{lemma}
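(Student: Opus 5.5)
We prove the lower bound with a barrier argument for $\partial_t u$, following \cite[Lemma 6.6]{FRS24}. The structure has two parts: (i) $\partial_t u$ is caloric and positive in $\{u>0\}$; (ii) by \autoref{lem:equiv_to_FRS6.6} the contact set at times $t\ge -r^2$ lies in the thin slab $\{|x_n|\le Cr^{2-\eps}\}$. A function caloric outside a slab of width $\ll r$, nonnegative, and not identically zero should grow at least linearly in $|x_n|$ away from the slab, like $|x_n|$. The mean over $\{|x_n|\ge r/10\}\cap B_r$ then gives $\bar c\, r$.

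\textbf{Step 1 (nondegeneracy at a fixed scale).} Fix a small $\rho>0$. We show $\partial_t u\ge c_0>0$ on a compact set $K\subset\{|x_n|\ge \rho/20\}\cap B_{\rho}\times[-\rho^2,-\rho^2/2]$. This comes from $\partial_t u>0$ in $\{u>0\}$ together with continuity. Since $u_r/r^2\to p_2$, the set $K$ lies in $\{u>0\}$ for $\rho$ small.

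\textbf{Step 2 (barrier).} Let $\Omega_r:=\gC_{\rho}\setminus\{|x_n|\le C r^{2-\eps}\}$, or more robustly a domain excluding the slab of width $C s^{2-\eps}$ at each scale $s$. In $\Omega_r$ we have $\partial_t u\ge 0$ caloric. Compare it with a barrier of the form
$$\Psi(x,t)=c_0\Bigl(|x_n|-\tfrac{A}{\rho}\bigl(|x'|^2+2(n-1)(t+\rho^2)\bigr)\Bigr)-\text{(slab correction)},$$
which is caloric, vanishes or is negative on the slab and the parabolic boundary, and lies below $\partial_t u$ on $K$. Since the slab width is $r^{2-\eps}\ll r$, this yields $\partial_t u(x,-r^2)\ge c(|x_n|-Cr^{2-\eps})-C r^2$ for $|x|\le r$. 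Averaging over $|x_n|\ge r/10$ gives $\ge \bar c r$.

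The direct single barrier may fail, because the slab width depends on the scale and the positivity information sits only at scale $\rho$. The cleaner route is iterative, as in \cite{FRS24}, via dyadic scales $r_k=2^{-k}\rho$. Suppose at scale $r_k$ we have $\partial_t u\ge a_k |x_n|$ on $\{|x_n|\ge r_k/10\}\cap\gC_{r_k}$. A boundary Harnack/Hopf-type comparison in the half-cylinders $\{x_n>Cr_k^{2-\eps}\}$ and $\{x_n<-Cr_k^{2-\eps}\}$ then gives $a_{k+1}\ge a_k(1-Cr_k^{1-\eps})$. The loss arises because the boundary is displaced by $r_k^{2-\eps}$ relative to the scale $r_k$, i.e. a relative error $r_k^{1-\eps}$. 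Since $\prod_k(1-C r_k^{1-\eps})>0$, we get $a_k\ge \bar c$ uniformly, which gives the claim.

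\textbf{Main obstacle.} The key step is the one-step comparison with a summable error. For it we use the linear caloric function $x_n$ in the half-cylinder, shifted by $Cr_k^{2-\eps}$ and corrected by the quadratic term $|x'|^2+2(n-1)t$ to handle the lateral boundary. We also use the maximum principle for $\partial_t u$, which is valid since $\partial_t u$ is caloric where $u>0$ and $\ge 0$ everywhere. Care is needed because $\partial_t u$ is only continuous, not $C^1$, up to the free boundary. We bypass this by working only in $\{|x_n|> Cr^{2-\eps}\}\subset\{u>0\}$, using that the contact set shrinks in time.
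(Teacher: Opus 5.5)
Your overall picture is the paper's. \autoref{lem:equiv_to_FRS6.6}, applied at every scale, traps the contact set in $\{|x_n|\le C(|x'|^2-t)^{1-\eps/2}\}$, whose complement is a parabolic $C^{1,1-\eps}$ domain. The paper then applies the Hopf lemma for such domains to $\partial_t u$ and gets $\partial_t u\ge c\,\mathrm{dist}_{\mathrm{par}}\ge cr$ directly. You instead try to prove that Hopf lemma by a dyadic iteration, and the step you call the key one does not work as stated.

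With the hypothesis $\partial_t u\ge a_k|x_n|$ known only on $\{|x_n|\ge r_k/10\}\cap\gC_{r_k}$, the lateral and initial parts of the parabolic boundary near the slab carry only $\partial_t u\ge 0$. Compare with the caloric function having data $a_k|x_n|$ on $\{|x_n|\ge r_k/10\}$ and $0$ elsewhere. It falls short of $a_k|x_n|$ at the new points $\{|x_n|\approx r_k/20\}\cap\gC_{r_k/2}$ by $c\,a_k|x_n|$, with $c>0$ independent of $k$. This happens already for a flat boundary ($h_k=0$), which is scale invariant, so the loss is not caused by the displacement $r_k^{2-\eps}$. Iterating such a step gives only $a_k\gtrsim(1-c)^k$, i.e.\ $\partial_t u\gtrsim r^{1+\kappa}$.

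Your proposed barrier does not fix this. For $x_n-h-\frac{A}{r}\bigl(|x'|^2+2(n-1)(t-t_*)\bigr)$ to be $\le 0$ on $\{x_n=h\}$ you need $t_*\le -r^2$. Then either it is positive at $t=-r^2$ for $h<x_n<r/10$, where you only know $\partial_t u\ge 0$, or it is already negative at the target points $x_n\approx r/20$, $t\ge -r^2/4$. A parabolic Hopf barrier has to exploit the time lag from the positivity set $K$.

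To repair the iteration, carry a bound that reaches down to the slab on the whole cylinder. Take as hypothesis $\partial_t u\ge a_k(|x_n|-\sigma_k)$ on $\gC_{r_k}\cap\{|x_n|>h_k\}$, with $h_k=Cr_k^{2-\eps}$ and $\sigma_k=h_{k-1}$.
\begin{itemize}
\item Compare on that set with $a_k(|x_n|-h_k)$, which vanishes on $\{|x_n|=h_k\}$. The deficit on the rest of the parabolic boundary is at most $a_k\sigma_k$.
\item By the boundary Lipschitz estimate in a flat half-cylinder, the caloric extension of that deficit is at most $Ca_k\sigma_k(|x_n|-h_k)/r_k$ in $\gC_{r_k/2}$.
\item Hence $\partial_t u\ge a_k(1-C\sigma_k/r_k)(|x_n|-h_k)$ on $\gC_{r_{k+1}}\cap\{|x_n|>h_{k+1}\}$. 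This is trivial where $|x_n|\le h_k$, since there the right-hand side is $\le 0$.
\item So $a_{k+1}=a_k(1-Cr_k^{1-\eps})$ and $\sigma_{k+1}=h_k$, and the losses are summable.
\end{itemize}
Only the base case at scale $\rho$ needs a genuinely parabolic comparison from $K$, with a one-time constant loss. Corrected this way, your route is a self-contained proof of exactly the Hopf lemma the paper cites. It uses only flat-boundary estimates and shows that summability of $h(r)/r$ is what is needed. The paper's route is shorter but relies on that external result.
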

\begin{proof}
    Take $\rho > 0$ small enough, by \autoref{lem:equiv_to_FRS6.6} applied to the rescaled function $\rho^{-2}u(\rho \cdot, \rho^2\cdot)$ and taking as radius the parabolic combination $r^2 := |x'|^2 - s$ we have for any $\eps > 0$
    $$
    \{u(\cdot, \cdot) = 0\} \cap B_\rho \subset \{|x_n| \leq C(|x'|^2 -t)^{1 - \frac{\eps}{2}}\} \quad \text{for all } t \in [-\rho^{2}, 0].
    $$
    Since the domain $\{|x_n| \leq C(|x'|^{2} -t)^{1 - \frac{\eps}{2}}\}$ is a parabolic $C_p^{1, \alpha}$ domain with $\alpha < 1 - \eps_0$ the Hopf lemma holds \cite[Theorem 3.1]{AN19} (see also \cite{Tor24} for a similar result in the more general domains $C^{1, \text{Dini}}$ and \cite{Lie96} Ch. XI \& X for general theory). We can apply Hopf lemma to the function $\partial_t u$ and obtain
    $$
    \partial_tu(x, t) \geq c \ \text{dist}_\text{par}(x, \{u(\cdot, \cdot) = 0\}) \geq cr,
    $$
    and the claim follows.
    
    Note that it is important to use the parabolic distance since $\{|x_n| \leq C(|x'|^{2} -t)^{1 - \frac{\eps}{2}}\}$ is not $C^{1, \alpha}$ with the Euclidean distance.
\end{proof}

\begin{proof}[Proof of \autoref{lem:cuad_cleaning_S3_n-1}]
    Is analogous to \autoref{lem:cuad_cleaning_Sm} using the bound from \autoref{lem:aux_bound_cleaning_parabolic_top_stratum} and \autoref{lem:quadratic_cleaning} with $w(r) = r^{3 - \eps}$ and $\beta = 1$. Note that the bound over $H(r, u - p_2)^{1/2}$ follows from \autoref{prop:lamb=gam} and \autoref{cor:H_bound_for_saturated_regim} taking $\gamma = 3 - \eps$. 
\end{proof}

From \autoref{lem:cuad_cleaning_Sm} and \autoref{lem:cuad_cleaning_S3_n-1} we deduce the following.
\smallskip
\begin{corollary}
    \label{cor:quadratic_cleaning}
    For any $(x_0, t_0) \in \Sigma$ and $\eps > 0$, there exists $r_0 \in (0, 1)$, depending on $x_0, t_0$ and $\eps$ such that
    $$
    \{u = 0\} \cap \left(\{B_{r}(x_0) \times [t_0 + r^{2 - \eps}, 1) \} \right) = \emptyset \quad \text{for all } r \in (0, r_0).
    $$
\end{corollary}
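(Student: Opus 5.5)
The plan is to reduce the statement to the two cleaning lemmas by translation, and then to split according to the stratum of the point. Fix $(x_0,t_0)\in\Sigma$ and $\eps>0$.

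\textbf{Reduction.} Consider the translated function
$$
\tilde u(x,t):=u(x_0+x,t_0+t),
$$
defined on a small cylinder around the origin. After a parabolic rescaling $\rho^{-2}\tilde u(\rho\,\cdot,\rho^2\,\cdot)$, this becomes a bounded solution of \eqref{eq:stefan_problem} on $B_1\times(-1,1)$. The right-hand side is now $\rho^2 f(x_0+\rho x)$ instead of $f$, and it is still positive and Lipschitz. Under this change of variables the origin is a singular point, whose blow-up polynomial is $p_{2,x_0,t_0}$ rescaled. Since $\Sigma=\bigcup_{m=0}^{n-1}\Sigma_m$, the point $(x_0,t_0)$ lies in exactly one stratum $\Sigma_m$.

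\textbf{Case $m\le n-2$.} Apply \autoref{lem:cuad_cleaning_Sm} to the rescaled function. It gives $r_\eps>0$ such that
$$
\{\tilde u=0\}\cap\left(B_r\times[r^{2-\eps},1)\right)=\emptyset \quad \text{for } r<r_\eps.
$$

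\textbf{Case $m=n-1$.} First rotate coordinates so that $p_{2,x_0,t_0}=\frac{f(x_0)}{2}x_n^2$; this is possible because $A\ge0$ has rank one with $\mathrm{tr}A=f(x_0)$. Then apply \autoref{lem:cuad_cleaning_S3_n-1}, which gives the same conclusion.

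\textbf{Undoing the reduction.} It remains to translate back and to track the scaling constants. In the rescaled variables the empty region is $B_r\times[r^{2-\eps},1)$, i.e. in original variables it is
$$
B_{\rho r}(x_0)\times[t_0+\rho^2 r^{2-\eps},\,t_0+\rho^2).
$$
Writing $s=\rho r$, one has $\rho^2 r^{2-\eps}=\rho^{\eps}s^{2-\eps}\le s^{2-\eps}$ because $\rho\le1$. Hence the set $B_s(x_0)\times[t_0+s^{2-\eps},\,t_0+\rho^2)$ contains no points of $\{u=0\}$.

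The remaining time range $[t_0+\rho^2,1)$ is handled by monotonicity in time. Since $\partial_t u\ge0$, the contact set shrinks in time: if $u(x,t)=0$ and $t'<t$, then $u(x,t')=0$. Now take $t\ge t_0+\rho^2$ and $x\in B_s(x_0)$ with $s$ small, so that $t_0+s^{2-\eps}<t_0+\rho^2$. If $u(x,t)=0$, then $u(x,t')=0$ for every $t'\in[t_0+s^{2-\eps},t_0+\rho^2)$, which contradicts what was just shown. Thus the cleaned region extends up to time $1$ (intersected with the domain), and choosing $r_0$ small, depending on $\rho$, $r_\eps$, $x_0$, $t_0$, gives the corollary.

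\textbf{Main obstacle.} The one point needing care is checking that the hypotheses of the two lemmas survive the translation, so that they apply uniformly in form at $(x_0,t_0)$. Positivity and the Lipschitz property of the shifted $f$ are clear. The $C^{1,1}_x\cap C^{0,1}_t$ bounds follow from \autoref{lem:constant_simplification_bound}. All the constants are allowed to depend on $(x_0,t_0)$, which is exactly the dependence permitted in the statement.
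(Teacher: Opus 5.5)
Your proposal is correct and follows the paper's route. The paper also obtains the corollary by recentering at $(x_0,t_0)$ and invoking \autoref{lem:cuad_cleaning_Sm} or \autoref{lem:cuad_cleaning_S3_n-1} according to the stratum; your rescaling bookkeeping and the time-monotonicity extension to $[t_0+\rho^2,1)$ just spell out details the paper leaves implicit. One harmless slip: after the parabolic rescaling the right-hand side is $f(x_0+\rho x)$, not $\rho^2 f(x_0+\rho x)$, because $\bH$ applied to $\rho^{-2}\tilde u(\rho\cdot,\rho^2\cdot)$ equals $(\bH\tilde u)(\rho\cdot,\rho^2\cdot)$.
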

To conclude the proof, we need a technical lemma from Geometric Measure Theory.
\smallskip
\begin{lemma}[{\cite[Lemma 8.9]{FRS24}}]
    \label{lem:dimension_reduction_lemma}
    Let $E \subset \R^n \times (-1, 1)$ with
    $$
    \text{dim}_\mathcal{H} \left( \pi_x(E)\right) \leq \beta.
    $$
    Assume that for any $\eps > 0$ and $(x_0, t_0) \in E$ there exists $\rho = \rho(\eps, x_0, t_0) > 0$ such that
    $$
    \left\{ (x, t) \in B_\rho(x_0) \times (-1, 1) : t - t_0 > |x - x_0|^{2-\eps}\right\} = \emptyset.
    $$
    Then $\text{dim}_\text{par}(E) \leq \beta$.
\end{lemma}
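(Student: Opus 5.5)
Since \autoref{lem:dimension_reduction_lemma} is quoted from \cite{FRS24}, the plan is to reproduce the standard covering argument. Fix $\beta' > \beta$; we want $\mathcal{H}^{\beta'}_{\text{par}}(E) = 0$, and then letting $\beta' \downarrow \beta$ gives the claim.

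\textbf{Step 1: reduce to uniform pieces.} Fix $\eps > 0$ small, to be chosen depending on $\beta'-\beta$. For $j \in \N$ let
$$
E_j := \{(x_0,t_0) \in E : \rho(\eps,x_0,t_0) \geq 1/j\}.
$$
Then $E = \bigcup_j E_j$. By countable stability of Hausdorff measure it suffices to show $\mathcal{H}^{\beta'}_{\text{par}}(E_j \cap Q) = 0$ for every small cube $Q$ of side less than $1/(2j)$. For two points $(x,t), (y,s) \in E_j \cap Q$ with $s > t$, the hypothesis applied at $(x,t)$ forces $s - t \leq |x-y|^{2-\eps}$. Applying it with the roles swapped covers the other order, so in all cases
$$
|t-s| \leq |x-y|^{2-\eps}.
$$
Thus $E_j\cap Q$ is the graph of a function over its spatial projection, with this Hölder-type control in time.

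\textbf{Step 2: covering.} Since $\dim_\mathcal{H}\pi_x(E) \leq \beta$, for any $\eta>0$ we can cover $\pi_x(E_j\cap Q)$ by balls $B_{r_i}(x_i)$ with $r_i<\eta$ and $\sum r_i^{\beta+\eps}<\eta$; this uses $\beta+\eps$ in place of $\beta$. Take each ball centered at a point of the projection, at the cost of doubling radii. By Step 1, every point of $E_j\cap Q$ lying over $B_{r_i}(x_i)$ has time coordinate within $(2r_i)^{2-\eps}$ of the time $t_i$ of the point over $x_i$. So it lies in a set of the form $B_{2r_i}(x_i)\times(t_i-(2r_i)^{2-\eps},\,t_i+(2r_i)^{2-\eps})$.

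\textbf{Step 3: counting parabolic balls.} This is the main step. Each such set has parabolic diameter of order $r_i^{1-\eps/2}$, which is too crude. Instead, cover it by about $r_i^{-\eps}$ parabolic cylinders of size $r_i$ stacked in time. The resulting cost is
$$
\sum_i r_i^{-\eps}\, r_i^{\beta'} = \sum_i r_i^{\beta'-\eps},
$$
which is at most $\sum_i r_i^{\beta+\eps}<\eta$ provided $\beta' \geq \beta+2\eps$. Letting $\eta\to0$ gives $\mathcal{H}^{\beta'}_{\text{par}}(E_j\cap Q)=0$.

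Since $\eps$ was arbitrary, this holds for every $\beta'>\beta$, so $\dim_\text{par}(E)\leq\beta$. The only delicate point is the bookkeeping in Step 3, namely choosing $\eps$ relative to $\beta'-\beta$.
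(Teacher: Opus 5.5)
Your proposal is correct and is essentially the standard argument behind \cite[Lemma 8.9]{FRS24}, which this paper quotes without proof. You split $E$ according to the size of $\rho$, apply the hypothesis (read, as intended, as $E\cap\{\dots\}=\emptyset$) at both points to get the two-sided bound $|t-s|\le|x-y|^{2-\eps}$, and then cover each piece lying over a spatial ball of radius $r_i$ by $\sim r_i^{-\eps}$ parabolic cylinders of size $r_i$.

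Two cosmetic remarks. The cubes should have side $<1/(\sqrt{n}\,j)$ so that spatial distances are genuinely $<1/j$. Also, the single cylinder of parabolic size $\sim r_i^{1-\eps/2}$ that you dismissed as too crude would also suffice: it gives $\dim_\text{par}(E_j)\le \beta/(1-\eps/2)$, which tends to $\beta$ as $\eps\downarrow 0$.
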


\begin{proof}[Proof of \autoref{thm:main_theorem}]
The result follows from \autoref{thm:cover_of_singular_projection}, \autoref{cor:quadratic_cleaning} and \autoref{lem:dimension_reduction_lemma}.
\end{proof}

\printbibliography
\end{document}